\DeclareMathOperator{\diag}{diag}
\DeclareMathOperator*{\Res}{Res}
\DeclareMathOperator\tr{{tr}}
\renewcommand{\Re}{\mathrm{Re}\,}
\renewcommand{\Im}{\mathrm{Im}\,}
\newcommand{\ud}{\,\mathrm{d}}
\newcommand{\msf}{\mathsf}
\newcommand{\ii}{\mathrm{i}}
\newcommand{\Boh}{\mathcal{O}}
\newtheorem{theorem}{Theorem}[section]
\newtheorem{lemma}[theorem]{Lemma}
\newtheorem{proposition}[theorem]{Proposition}
\newtheorem{corollary}[theorem]{Corollary}
\newtheorem{rhp}[theorem]{RH problem}
\theoremstyle{definition}
\theoremstyle{remark}
\numberwithin{equation}{section}
\begin{document}
\title{Asymptotics of the hard edge Pearcey determinant}
\author{Luming Yao\footnotemark[1] ~and ~Lun Zhang\footnotemark[2]}

\renewcommand{\thefootnote}{\fnsymbol{footnote}}
\footnotetext[1]{School of Mathematical Sciences, Fudan University, Shanghai 200433, China. E-mail: \texttt{lumingyao@fudan.edu.cn}}
\footnotetext[2]{School of Mathematical Sciences and Shanghai Key Laboratory for Contemporary Applied Mathematics, Fudan University, Shanghai 200433, China. E-mail: \texttt{lunzhang@fudan.edu.cn}}
\date{\today}
\maketitle

\begin{abstract}
We study the Fredholm determinant of an integral operator associated to the hard edge Pearcey kernel. This determinant appears in a variety of random matrix and non-intersecting paths models. By relating the logarithmic derivatives of the Fredholm determinant to a $3 \times 3$ Riemann-Hilbert problem, we obtain asymptotics of the determinant, which is also known as the large gap asymptotics for the corresponding point process.

\end{abstract}

\tableofcontents

\section{Introduction and statement of the result}
In a classical work \cite{Dyson}, Dyson observed that eigenvalues of the process version of Gaussian unitary ensemble share the same statistics with non-intersecting Brownian motions. Since then, one dimensional Markov processes conditioned not to intersect have played an important role in the studies of random matrix theory and a variety of problems arising from probability and mathematical physics. An important motivation behind is that these models give rise to universal determinantal point processes, which also appear in a wide range of interacting particle systems.

The hard edge Pearcy process is a concrete example related to a model of non-intersecting squared Bessel paths. The squared Bessel process is a diffusion process depending on a parameter $\alpha>-1$ with transition probability function constructed via the the modified Bessel functions of the first kind; cf. \cite{BS}. If $d = 2(\alpha+1)$ is an integer, it can be obtained as the square of the distance to the origin of a $d$-dimensional Brownian motion. The model consists of $n$ independent copies of the squared Bessel process such that they all start at some fixed positions at $t=0$, end at some fixed positions at $t=T$, and do not intersect one another for $0<t<T$. By \cite{Konig01}, non-intersecting squared Bessel paths provides a process version of the Laguerre unitary ensemble, and different types of initial and ending conditions are considered in \cite{Del13,DKRZ12,Kat12,KIK08,KatTan11,KatTan04,Kuijlaars2011,KMW09}. If all the paths start at the same positive value when $t=0$ and end at $x=0$ when $t=T$, it comes out that as $n\to\infty$, after proper scaling, the paths will fill in a region in the $tx$-plane; see Figure \ref{fig:nsbp} for an illustration.
\begin{figure}[t]
 \centering
  \includegraphics[scale=.55]{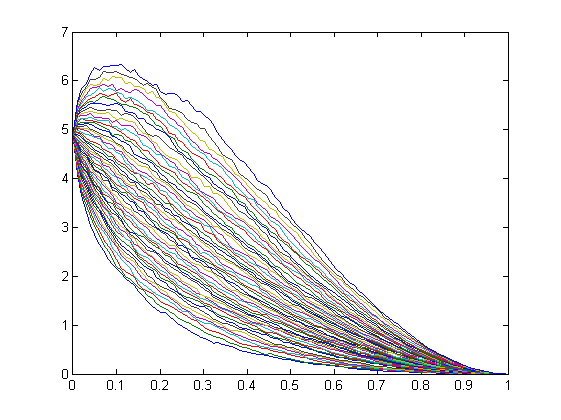}
  \caption{Simulation picture of 50 rescaled non-intersecting squared Bessel paths with $\alpha=4$ that start at $x=5$ and end at $x=0$.}
  \label{fig:nsbp}
\end{figure}
It is readily seen from the numerical simulation that there is a critical time such that the lowest path stays away from the hard edge at $x=0$ for any earlier time while stays close to $0$ for any later time. The local statistics are governed by classical Airy and Bessel processes from random matrix theory respectively; see \cite{KMW09}. After scaling around the critical time, one encounters a determinantal point process characterized by the following kernel (see \cite[Equations (1.19) and (1.23)]{Kuijlaars2011}):
\begin{align}\label{kernel}
&K_{\alpha}(x, y; \rho)
\nonumber
\\
& =\frac{1}{ (2 \pi \ii)^2}\int_{t\in \Gamma}\int_{s\in \Sigma}\frac{e^{\rho/t+1/(2t^2)-\rho/s -1/(2s^2)+xt-ys}}{s-t}
\left(\frac{t}{s}\right)^{\alpha}\ud t \ud s
\nonumber
\\
&= \frac{\mathcal{P}(x)\left[\mathcal{Q}''(y) - (\alpha - 2) \mathcal{Q}'(y) - \rho \mathcal{Q}(y)\right] - \mathcal{P}'(x) \left[y \mathcal{Q}'(y) - (\alpha - 1) \mathcal{Q}(y)\right] + y \mathcal{P}''(x) \mathcal{Q}(y)}{2 \pi \ii (x-y)}
\end{align}
for $x, y > 0$, where the parameters $\alpha > 1$, $\rho \in \mathbb{R}$,
\begin{equation}\label{def-PQ}
	\mathcal{P}(x) = \int_{\Gamma} t^{\alpha-3} e^{xt + \frac{\rho}{t} + \frac{1}{2t^2}} \ud t, \qquad \mathcal{Q}(y) = \int_{\Sigma} t^{\alpha-4} e^{-yt - \frac{\rho}{t} - \frac{1}{2t^2}} \ud t,
\end{equation}
and the contours $\Gamma$ and $\Sigma$ are illustrated in Figure \ref{figure-contour}. The functions $\mathcal{P}$ and $\mathcal{Q}$ in \eqref{def-PQ} satisfy the third order ordinary differential equations
\begin{align}
x \mathcal{P}'''(x) + \alpha \mathcal{P}''(x) - \rho \mathcal{P}'(x) - \mathcal{P}(x) &= 0,\label{3rd-P}\\
y \mathcal{Q}''(y) + (3 - \alpha) \mathcal{Q}''(y) - \rho \mathcal{Q}'(y) + \mathcal{Q}(y) &= 0,
\end{align}
respectively. Following the terminology in \cite{DV15}, we call $K_\alpha$ the hard edge Pearcey kernel, as it appears at the cusp of non-intersecting squared Bessel paths model.

It was expected in \cite{Kuijlaars2011} that $K_\alpha$ also admits an alternative representation in terms of the Bessel functions of the first kind, which was derived earlier by Desrosiers and Forrester in the context of perturbed chiral Gaussian unitary ensemble \cite{DF08}. This conjecture was later resolved in \cite{DV15}. The universal feature of hard edge Pearcey process can be seen from its appearances in the investigation of subjects as diverse as Jacobi growth process \cite{CerKuan20}, non-intersecting Brownian motions with walls \cite{LieWang17}, random surface growth models \cite{BK10,Cer15}, etc.

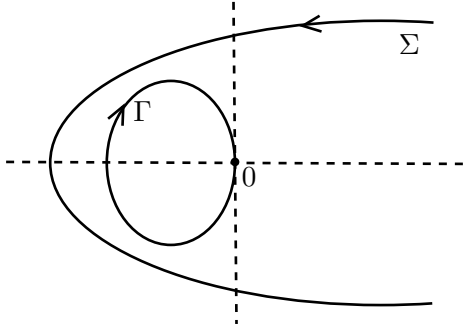
\begin{figure}[t]
\center
\tikzset{every picture/.style={line width=1pt}} 

\begin{tikzpicture}[x=0.75pt,y=0.75pt,yscale=-1,xscale=1]

\draw [line width=1][dash pattern={on 3pt off 3pt}]    (148,119) -- (378.5,120) ;
\draw   [line width=1][dash pattern={on 3pt off 3pt}]  (261.5,38.5) -- (263,200.5) ;
\draw  [line width=1][fill={rgb, 255:red, 0; green, 0; blue, 0 }  ,fill opacity=1 ] (260.75,119) .. controls (260.75,118.17) and (261.42,117.5) .. (262.25,117.5) .. controls (263.08,117.5) and (263.75,118.17) .. (263.75,119) .. controls (263.75,119.83) and (263.08,120.5) .. (262.25,120.5) .. controls (261.42,120.5) and (260.75,119.83) .. (260.75,119) -- cycle ;
\draw   [line width=1](198.25,119.5) .. controls (198.25,96.72) and (212.58,78.25) .. (230.25,78.25) .. controls (247.92,78.25) and (262.25,96.72) .. (262.25,119.5) .. controls (262.25,142.28) and (247.92,160.75) .. (230.25,160.75) .. controls (212.58,160.75) and (198.25,142.28) .. (198.25,119.5) -- cycle ;
\draw   (198.92,97.27) -- (206.49,90.96) -- (206.1,100.8) ;
\draw  [draw opacity=0] (360.71,190.14) .. controls (352.33,190.71) and (343.75,191) .. (335,191) .. controls (243.87,191) and (170,158.99) .. (170,119.5) .. controls (170,80.01) and (243.87,48) .. (335,48) .. controls (343.96,48) and (352.76,48.31) .. (361.34,48.91) -- (335,119.5) -- cycle ; \draw   (360.71,190.14) .. controls (352.33,190.71) and (343.75,191) .. (335,191) .. controls (243.87,191) and (170,158.99) .. (170,119.5) .. controls (170,80.01) and (243.87,48) .. (335,48) .. controls (343.96,48) and (352.76,48.31) .. (361.34,48.91) ;
\draw   (305.45,53.42) -- (296.03,50.55) -- (304.48,45.48) ;

\draw (264,120) node [anchor=north west][inner sep=0.75pt]   [align=left] {$0$};
\draw (342.34,53) node [anchor=north west][inner sep=0.75pt]   [align=left] {$\Sigma$};
\draw (210,87) node [anchor=north west][inner sep=0.75pt]   [align=left] {$\Gamma$};

\end{tikzpicture}
\caption{The contours $\Gamma$ and $\Sigma$ in the definitions of $\mathcal{P}$, $\mathcal{Q}$ and $K_\alpha$.}
\label{figure-contour}
\end{figure}

Let $\mathcal{K}_s$ be the integral operator acting on $L^2(0, s)$, $s \ge 0$, with the hard edge Pearcey kernel $K_\alpha$ given in \eqref{kernel}. Due to the determinantal structure, the Fredholm determinant $\det(I - \mathcal K_{s})$ can be interpreted as the gap probability for the hard edge Pearcey process over the interval $(0, s)$. Intensive studies of various Fredholm determinants arising from random matrix theory have exhibited their close connections with integrable systems and elegant forms of the large gap asymptotics. The relevant results can be found in \cite{DKMVZ01,Dyson76,Ehr06,JMU81,Kra04,Widom94} for the sine determinant, in \cite{BBD08,DIK08,TWAiry} for the Airy determinant, in \cite{DeiBes,Ehr10,TWBessel} for the Bessel determinant, in \cite{Advan07,BerCaf12,Bre98,DXZ22b,DXZ21} for the Pearcey determinant, among others. For the gap probability of the hard edge Pearcey process, it has been shown in \cite{DXZ22a} and \cite{Gir14} that $\det(I - \mathcal K_{s})$ can be connected to two different integrable systems, although the precise relationship is not clear yet. In addition, asymptotics of the deformed case, i.e., $\det(I - \gamma \mathcal K_{s})$, $0<\gamma<1$, is also obtained in \cite{DXZ22a}. This in turn gives us large gap asymptotics of the thinned process. We contribute to these developments by establishing  large gap asymptotics for the hard edge Pearcey process stated below.
\begin{theorem}\label{th1}
Let \begin{equation}\label{def-F}
	F(s; \rho) := \ln \det(I - \mathcal K_{s}).
\end{equation}
	As $s \to \infty$, we have
	\begin{align}\label{F1}
		F(s; \rho) = & -\frac{9}{2^{14/3}}s^{\frac{4}{3}}+\frac{\rho}{2} s + \frac{3 \alpha - \rho^2}{2^{7/3}} s^{\frac{2}{3}} - \frac{\alpha \rho}{2^{2/3}} s^{\frac{1}{3}} -\frac{12\alpha^2+1}{72} \ln{s}
\nonumber \\
		& + \frac{\rho^4}{108} + \frac{\alpha \rho^2}{6} + C+\Boh(s^{-\frac{1}{3}}),
		\end{align}
uniformly for $\rho$ in any compact subset of $\mathbb{R}$, where $C$ is an undetermined constant independent of $\rho$ and $s$.
\end{theorem}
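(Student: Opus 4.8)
The plan is to follow the now-standard Riemann--Hilbert (RH) approach to large gap asymptotics, adapted to the $3\times 3$ setting dictated by the hard edge Pearcey kernel. The first step is to express the logarithmic derivatives of $F(s;\rho)$ in terms of a $3\times3$ RH problem. Since $K_\alpha$ is an integrable kernel in the sense of Its--Izergin--Korepin--Slavnov, the resolvent of $I-\mathcal{K}_s$ is itself integrable and is built out of the solution $X=X(z;s)$ of a $3\times3$ RH problem with jumps on the contours appearing in \eqref{kernel}. Standard manipulations then give a formula of the form $\partial_s F(s;\rho)$ in terms of the residue or the subleading coefficients in the expansion of $X$ at the relevant singular points (here $z=0$ and $z=\infty$), and likewise $\partial_\rho F(s;\rho)$ in terms of $X$. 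The aim is to reduce the computation of the asymptotics of $F$ to a careful asymptotic analysis of $X$ as $s\to\infty$, followed by integration in $s$ (and in $\rho$) of the resulting differential identities.

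The core of the argument is the Deift--Zhou steepest descent analysis of the RH problem for $X$ as $s\to\infty$. After rescaling $z\mapsto s^{1/3}z$ (or a similar power matching the cubic/quartic structure of the phase $xt+\rho/t+1/(2t^2)$ one reads off from \eqref{kernel}), I would introduce a $g$-function (equivalently, construct the appropriate equilibrium problem / Riemann surface) so that the oscillatory jumps become either exponentially close to the identity or constant. One then performs the usual sequence of transformations $X\to T\to S\to R$: normalization at infinity via the $g$-function, opening of lenses to exploit the factorization of the jump matrices, construction of a global parametrix $M^{(1)},M^{(2)},\dots$ (in the notation the paper has reserved) solving the model RH problem away from the singular points, and construction of local parametrices near the hard edge $z=0$ and near any other soft-edge/branch points using Bessel and Airy (and possibly Pearcey-type) building blocks, consistent with the operators $\Airy,\Bes,\CHF$ the preamble introduces. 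The final transformation yields a small-norm RH problem for $R$, whose solution has an expansion $R = I + R_1 s^{-1/3} + \dots$ with explicitly computable matrix coefficients.

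From the parametrix one then extracts the expansions of the quantities feeding the differential identities: the $g$-function evaluated at $0$ and $\infty$ contributes the leading algebraic powers $s^{4/3}, s, s^{2/3}, s^{1/3}$ with coefficients coming from the equilibrium measure and the endpoint behavior; the $\ln s$ term comes from the local parametrix at the hard edge (its $\alpha$-dependence, via $12\alpha^2$, reflecting the Bessel parametrix with parameter $\alpha$) together with the scaling Jacobian; and the constant and $\Boh(s^{-1/3})$ terms come from $R_1$ and the subleading parametrix data. Integrating $\partial_s F$ in $s$ produces \eqref{F1} up to an $s$-independent constant of integration that may still depend on $\rho$; integrating $\partial_\rho F$ in $\rho$ (using the already-known $\rho$-dependence of the algebraic terms to fix the polynomial $\rho^4/108+\alpha\rho^2/6$) then pins down the $\rho$-dependence of this constant, leaving only the genuinely universal constant $C$ independent of both $s$ and $\rho$, exactly as stated. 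Uniformity for $\rho$ in compact subsets of $\mathbb{R}$ follows because all parametrix estimates are uniform in such $\rho$.

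The main obstacle I anticipate is the steepest descent analysis itself in the $3\times3$ setting: choosing the correct scaling and constructing the $g$-function (or the genus-$0$ Riemann surface and its associated measure) so that \emph{all} jumps are controlled simultaneously is considerably more delicate than in the $2\times2$ case, and the hard edge $z=0$ forces a Bessel-type local parametrix of size $3\times3$ whose matching with the global parametrix must be done carefully to get the precise $\ln s$ coefficient and the constant term right. A secondary difficulty is bookkeeping: assembling the various $\mathcal{O}(s^{-1/3})$ contributions from $R_1$, from the $g$-function endpoints, and from the local parametrices, and checking that the error is genuinely $\Boh(s^{-1/3})$ uniformly, requires tracking many cancellations; and the integration of the differential identities must be arranged so that no term is lost to the constant of integration beyond the single undetermined $C$.
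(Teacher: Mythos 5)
Your overall strategy is the one the paper follows: express $\partial_sF$ and $\partial_\rho F$ through a $3\times3$ RH problem for an operator $X$ built from the integrable kernel structure, carry out a Deift--Zhou analysis after a rescaling, construct global and local parametrices, and then integrate the resulting differential identities in $s$ and in $\rho$. But several of the concrete choices you anticipate are wrong in ways that would block the computation.

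First, the differential identity for $\partial_sF$ is not read off from the behavior of $X$ at $z=0$ or $z=\infty$. Because the operator acts on $L^2(0,s)$ the RH problem for $X$ has an extra marked point at $z=s$, and the identity is
$\partial_s F(s;\rho)=-\tfrac{1}{2\pi\ii}\lim_{z\to s}\bigl(X(z)^{-1}X'(z)\bigr)_{21}$,
that is, it is evaluated \emph{at the moving endpoint $z=s$}. The $z=\infty$ expansion enters only through the second identity $\partial_\rho F(s;\rho)=-(X_1)_{31}+\rho(\rho^2+9\alpha)/27$, which is used solely to pin down the $\rho$-dependence of the integration constant. If you try to extract $\partial_sF$ from the behavior at $0$ or $\infty$, you will not obtain \eqref{F1}. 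Second, after the correct rescaling $T(z)=X(sz)$ (so $z\mapsto sz$, not $z\mapsto s^{1/3}z$; the $s^{1/3}$ and $s^{2/3}$ powers only appear later in the arguments of the Bessel parametrices and in the phases), the conformal map $\tilde f(z)$ at $z=1$ behaves like $-\tilde c_1^2(z-1)+\cdots$, i.e.\ has a simple zero. This forces a \emph{Bessel parametrix with $\alpha=0$} at the right endpoint, not an Airy or Pearcey parametrix; the rightmost endpoint of a gap interval is a logarithmic singularity of the resolvent, not a spectral soft edge. Both local parametrices here are Bessel (with parameter $\alpha$ at $z=0$ and parameter $0$ at $z=1$), and both feed into the $\ln s$ coefficient $-(12\alpha^2+1)/72$ — the $4\alpha^2-1$ piece through $\Res_{\zeta=0}J_1(\zeta)$ and the $\alpha$-independent piece through the $z=1$ parametrix. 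Third, the small-norm expansion of $R$ is in powers of $s^{-2/3}$ (namely $R=I+R_1(z)s^{-2/3}+\Boh(s^{-4/3})$), not $s^{-1/3}$, and the jump factorization/opening of lenses step you propose is not needed: the jump contour of $\Psi$ already places the oscillatory factors on rays where, after the $\lambda$-normalization, they decay exponentially. Finally, the normalization step is not done with a scalar $g$-function or an equilibrium measure; the paper constructs three $\lambda$-functions via the explicit solutions $w_j$ of a cubic algebraic equation on a $3$-sheeted genus-zero Riemann surface, which is essential for making all three exponential phases $e^{\theta_k(sz)}$ match simultaneously.

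So while the route is correct in outline, you would need to fix the location of the $\partial_sF$ identity, use Bessel$_0$ (not Airy) at the right endpoint, use the rescaling $z\mapsto sz$ and the $s^{-2/3}$ error scale, and replace the $g$-function/lenses machinery by the explicit $\lambda$-functions on the three-sheeted Riemann surface before the plan would actually produce \eqref{F1}.
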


Some remarks about the above theorem are the following. Our asymptotic formula supports the so-called Forrester-Chen-Eriksen-Tracy conjecture; cf. \cite{Chen95,For93}. Based on a Coulomb fluid approach, this conjecture claims that the probability $E(s)$ of emptiness over the interval $(0,s)$ behaves like $\exp(-\mu s^{2\kappa+2})$ for large positive $s$ with $\mu$ being some constant, provided the density of state $\eta(x)$ satisfies $\eta(x)\sim x^{\kappa}$ as $x\to 0$. The present case corresponds to $\kappa=-1/3$. Asymptoics of $\det(I - \gamma \mathcal K_{s})$ exhibit significantly different asymptotic behaviours for $\gamma=1$ and $0<\gamma <1$. Indeed, by \cite[Theorem 2.2]{DXZ22a}, it follows that $\ln(\det(I - \gamma \mathcal K_{s}))\sim \Boh(s^{-2/3})$. Finally, we cannot evaluate explicitly the constant $C$ in \eqref{F1} with our method, which in general is  a challenging task; cf. \cite{Kra09}.

The proof of Theorem \ref{th1} relies on the integrable structure of hard edge Pearcey kernel. This special structure enables us to relate various derivatives of $F$ to a $3 \times 3$ Riemann-Hilbert (RH) problem under the general framework \cite{BD2002,Dei97}. In Section \ref{sec:preli}, we recall this RH problem derived in \cite{DXZ22a} and further establish its connection with $\partial F / \partial \rho$.
A key step in our analysis is the construction of $\lambda$-functions defined on a Riemann surface with a specified sheet structure, which is given in Section \ref{sec:lambdafunctions}. With the aid of these auxiliary functions, we perform a Deift-Zhou steepest descent analysis
on the relevant RH problem as $s\to \infty$ in Section \ref{sec:asyanaly}. The proof of Theorem \ref{th1} is an outcome of our asymptotic analysis, which is presented in Section \ref{sec:proof}.

\paragraph{Notations} Throughout this paper, the following notations are frequently used.
\begin{itemize}
  \item If $A$ is a matrix, then $(A)_{ij}$ stands for its $(i,j)$-th entry and $A^{\msf T}$ stands for its transposition. An unimportant entry of $A$ is denoted by $\ast$. We use $I$ to denote the identity matrix, and the size might differ in different contexts.
  \item We denote by $D(z_0; r)$ the open disc centred at $z_0$ with radius $r > 0$, i.e.,
  \begin{equation}\label{def:dz0r}
   D(z_0; r) := \{ z\in \mathbb{C} \mid |z-z_0|<r \},
   \end{equation}
  and denote by $\partial D(z_0, r)$ its boundary.
  \item As usual, the three Pauli matrices $\{\sigma_j\}_{j=1}^3$ are defined by
\begin{equation}\label{def:Pauli}
\sigma_1=\begin{pmatrix}
           0 & 1 \\
           1 & 0
        \end{pmatrix},
        \qquad
        \sigma_2=\begin{pmatrix}
        0 & -\ii \\
        \ii & 0
        \end{pmatrix},
        \qquad
        \sigma_3=
        \begin{pmatrix}
        1 & 0 \\
         0 & -1
         \end{pmatrix}.
\end{equation}

\end{itemize}


\section{Preliminaries}\label{sec:preli}
It has been shown in \cite{DXZ22a} that $\partial F / \partial s$ is related to the local behavior of a $3 \times 3$ RH problem. In this section, we will recall the derivation of this RH problem and further establish its connection with $\partial F / \partial \rho$.

We start with a $3 \times 3$ RH problem which characterizes the hard edge Pearcey kernel $K_{\alpha}$, as given in \cite{Kuijlaars2011} and stated next.

\begin{rhp}\label{rhp-Psi}
\hfill
\begin{itemize}
\item[\rm (a)] $\Psi(z)=\Psi(z;\rho,\alpha)$ is analytic in $\mathbb{C} \setminus \Sigma_{\Psi}$,
where $\alpha>-1$ and $\rho$ are real parameters,
\begin{equation}
\Sigma_{\Psi}:=\cup_{k=0}^5 \Sigma_k \cup \{0\},
\end{equation}
 with
    \begin{equation}
    \Sigma_0= (0,\infty),\qquad  \Sigma_1=e^{\frac{\pi}{4}\ii}(0,\infty), \qquad  \Sigma_2=e^{\frac{3\pi}{4}\ii}(0,\infty), \quad
    \end{equation}
    and
    \begin{equation}
    \Sigma_{3+k}=-\Sigma_k, \quad k=0,1,2;
    \end{equation}
see Figure \ref{figure-Psi} for an illustration.

\begin{figure}[h]
\centering

\begin{tikzpicture}[x=0.75pt,y=0.75pt,yscale=-1,xscale=1]

\draw   [line width=1] (139.53,160.53) -- (361.53,160.53) ;
\draw    [line width=1](170.08,89.6) -- (330.98,231.47) ;
\draw    [line width=1](169.86,233.16) -- (331.21,87.91) ;
\draw  [line width=1]  (318,163.67) -- (328,160.67) -- (318,157.67) ;
\draw  [line width=1]  (171,163.67) -- (181,160.67) -- (171,157.67) ;
\draw  [line width=1]  (302.59,118.32) -- (307.54,109.13) -- (298.34,114.08) ;
\draw  [line width=1]  (192.59,217.32) -- (197.54,208.13) -- (188.34,213.08) ;
\draw  [line width=1] (190.68,111.59) -- (199.87,116.54) -- (194.92,107.34) ;
\draw  [line width=1]  (298.68,206.59) -- (307.87,211.54) -- (302.92,202.34) ;
\draw  [fill={rgb, 255:red, 0; green, 0; blue, 0 }  ,fill opacity=1 ][line width=1.5]  (250.42,161.03) .. controls (250.42,160.76) and (250.47,160.53) .. (250.53,160.53) .. controls (250.6,160.53) and (250.65,160.76) .. (250.65,161.03) .. controls (250.65,161.31) and (250.6,161.53) .. (250.53,161.53) .. controls (250.47,161.53) and (250.42,161.31) .. (250.42,161.03) -- cycle ;

\draw (245,164.78) node [anchor=north west][inner sep=0.75pt]   [align=left] {0};
\draw (148,77.78) node [anchor=north west][inner sep=0.75pt]   [align=left] {$\Sigma_2$};
\draw (118,151.78) node [anchor=north west][inner sep=0.75pt]   [align=left] {$\Sigma_3$};
\draw (148,226.78) node [anchor=north west][inner sep=0.75pt]   [align=left] {$\Sigma_4$};
\draw (334,226.78) node [anchor=north west][inner sep=0.75pt]   [align=left] {$\Sigma_5$};
\draw (334,77.78) node [anchor=north west][inner sep=0.75pt]   [align=left] {$\Sigma_1$};
\draw (365,151.78) node [anchor=north west][inner sep=0.75pt]   [align=left] {$\Sigma_0$};
\end{tikzpicture}
 \caption{The jump contours $\Sigma_k$, $k=0,1,\ldots,5$, in the RH problem for $\Psi$.}
 \label{figure-Psi}
\end{figure}
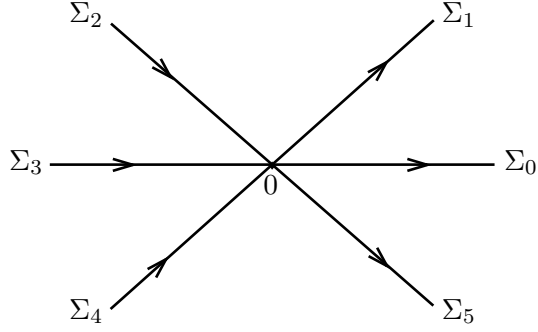

\item [\rm (b)] For $z \in \Sigma_{k}$, $k=0,1,\ldots,5$, $\Psi$ has continuous boundary values $\Psi_{\pm}(z)$, where the $+/-$-side of $\Sigma_k$ is the side which lies on the left/right of $\Sigma_k$, when traversing $\Sigma_k$ according to its orientation. These boundary values satisfy
    \begin{equation}
    \Psi_{+}(z)=\Psi_{-}(z) J_{\Psi}(z), \quad z\in \cup_{k=0}^5 \Sigma_k,
    \end{equation}
where
\begin{equation}\label{jump-Psi}
	J_{\Psi}(z):= \begin{cases}
	\begin{pmatrix}
	    0 & 1 & 0\\
	    -1 & 0 & 0\\
	    0 & 0 & 1
	\end{pmatrix}, &\quad z \in \Sigma_0,\\
	\begin{pmatrix}
	     1 & 0 & 0\\
	     1 & 1 & 0\\
	     0 & 0 & 1
	\end{pmatrix}, &\quad z \in \Sigma_1,\\
	\begin{pmatrix}
	     1 & 0 & 0\\
	     0 & 1 & e^{\alpha \pi \ii}\\
	     0 & 0 & 1
	\end{pmatrix}, &\quad z \in \Sigma_2,\\
	\begin{pmatrix}
	     1 & 0 & 0\\
	     0 & 0 & -e^{-\alpha \pi \ii}\\
	     0 & e^{-\alpha \pi \ii} & 0
	\end{pmatrix}, &\quad z \in \Sigma_3,\\
	\begin{pmatrix}
	     1 & 0 & 0\\
	     0 & 1 & e^{-\alpha \pi \ii}\\
	     0 & 0 & 1
	\end{pmatrix}, &\quad z \in \Sigma_4,\\
	\begin{pmatrix}
	     1 & 0 & 0\\
	     1 & 1 & 0\\
	     0 & 0 & 1
	\end{pmatrix}, &\quad z \in \Sigma_5.
	\end{cases}
\end{equation}
\item [\rm (c)] As $z \to \infty$ with $z\in \mathbb{C}\setminus \Sigma_{\Psi}$, we have
\begin{multline}\label{infty-Psi}
    \Psi(z) = \frac{\ii z^{-\alpha/3}}{\sqrt{3}} \Psi_{0}\left(I + \frac{\Psi_1}{z}+\Boh(z^{-2})\right)\diag{(z^{\frac{1}{3}}, 1, z^{-\frac{1}{3}})}
  \\
  \times L_{\pm} \diag{(e^{\pm \frac{\alpha \pi }{3}\ii}, e^{\mp \frac{\alpha \pi }{3}\ii}, 1)} e^{\Theta (z)}, \quad \pm \Im{z} > 0,
\end{multline}
where
\begin{equation}\label{def-Psi-0}
	\Psi_0  = \begin{pmatrix}
         1 & \pi_3(\rho) & \pi_{6}(\rho)\\
         0 & 1 & \pi_3(\rho) + \rho/3\\
         0 & 0 & 1
    \end{pmatrix}, \qquad
    \Psi_1  = \begin{pmatrix}
         \ast & \ast & \ast\\
         \ast & \ast & \ast\\
         \pi_3(\rho) + 2\rho/3 & \ast & \ast
    \end{pmatrix}
\end{equation}
with
\begin{align}
     \pi_3(\rho) & = \frac{\rho (\rho^2+9\alpha-18)}{27},  \label{def-pi-3}\\
     \pi_6(\rho) & = \frac{\rho^6+(18\alpha-45)\rho^4 + (81\alpha^2-405\alpha+405)\rho^2-243\alpha^2+729\alpha-405}{2\cdot3^6}, \label{def-pi-6}
\end{align}
\begin{equation}\label{def-L+-}
    L_+ = \begin{pmatrix}
         \omega & \omega^2 & 1\\
         1 & 1 & 1\\
         \omega^2 & \omega & 1
    \end{pmatrix}, \qquad
    L_- = \begin{pmatrix}
         \omega^2 & -\omega & 1\\
         1 & -1 & 1\\
         \omega & -\omega^2 & 1
    \end{pmatrix},
\end{equation}
with $\omega = e^{2\pi \ii/3}$, and
\begin{equation}\label{def-theta}
    \Theta (z) = \Theta (z; \rho) =\begin{cases}
    \diag{(\theta_1(z; \rho), \theta_2(z; \rho), \theta_3(z; \rho))}, & \quad \Im{z} >0,\\
    \diag{(\theta_2(z; \rho), \theta_1(z; \rho), \theta_3(z; \rho))}, & \quad \Im{z} <0,
    \end{cases}
\end{equation}
with
\begin{equation}\label{def-theta-k}
    \theta_k(z; \rho) = \frac{3}{2} \omega^{2k} z^{\frac{2}{3}} + \rho \omega^k z^{\frac{1}{3}}, \quad k = 1, 2, 3.
\end{equation}

\item [\rm (d)] As $z \to 0$, we have
\begin{align}
    \Psi(z) \begin{pmatrix}
         z^{\alpha} & 0 & 0\\
         0 & z^{\alpha} & 0\\
         0 & 0 & 1
    \end{pmatrix} & = \Boh(1), \quad \quad 0 < |\arg z| < \frac{\pi}{4},\\
    \Psi(z) \begin{pmatrix}
         1 & 0 & 0\\
         0 & z^{\alpha} & 0\\
         0 & 0 & 1
    \end{pmatrix} & = \Boh(1), \quad \quad \frac{\pi}{4} < |\arg z| < \frac{3\pi}{4},\\
    \Psi(z) \begin{pmatrix}
         1 & 0 & 0\\
         0 & z^{\alpha} & 0\\
         0 & 0 & z^{\alpha}
    \end{pmatrix} & = \Boh(1), \quad \quad \frac{3\pi}{4} < |\arg z| < \pi.
\end{align}
\end{itemize}
\end{rhp}

By \cite[Theorem 1.4 and Proposition 5.2]{Kuijlaars2011}, RH problem \ref{rhp-Psi} for $\Psi$ has a unique solution which can be constructed through the functions
\begin{equation}\label{integral-p}
\mathcal {P}_k(z) :=
\begin{cases}
\int_{\gamma_k} t^{\alpha-3}e^{zt+\frac{\rho}{t}+\frac{1}{2t^2}} \ud t, & \quad \textrm{$k = 1$ with $-\frac{\pi}{2}< \arg t < \frac{\pi}{2}$,}\\
e^{-\alpha \pi \ii} \int_{\gamma_k} t^{\alpha-3}e^{zt+\frac{\rho}{t}+\frac{1}{2t^2}} \ud t, & \quad \textrm{$k = 2$ with $\frac{\pi}{2}< \arg t < \frac{3\pi}{2}$,}\\
e^{-\alpha \pi \ii} \int_{\gamma_k} t^{\alpha-3}e^{zt+\frac{\rho}{t}+\frac{1}{2t^2}} \ud t, & \quad \textrm{$k = 3$ with $0 < \arg t < \pi $,}\\
e^{\alpha \pi \ii} \int_{\gamma_k} t^{\alpha-3}e^{zt+\frac{\rho}{t}+\frac{1}{2t^2}} \ud t, & \quad \textrm{$k = 4$ with $-\pi < \arg t < 0$,}
\end{cases}
\end{equation}
where the contours $\gamma_k$, $k=1,\ldots,4$, are illustrated in Figure \ref{fig:gamma-k}. We refer to \cite{Kuijlaars2011} for the precise descriptions of the contours $\gamma_k$ and the construction of $\Psi$. It is worthwhile to mention that $\mathcal {P}_k$, $k=1,2,3,4$, satisfies the differential equation \eqref{3rd-P} and any three of them are linearly independent.


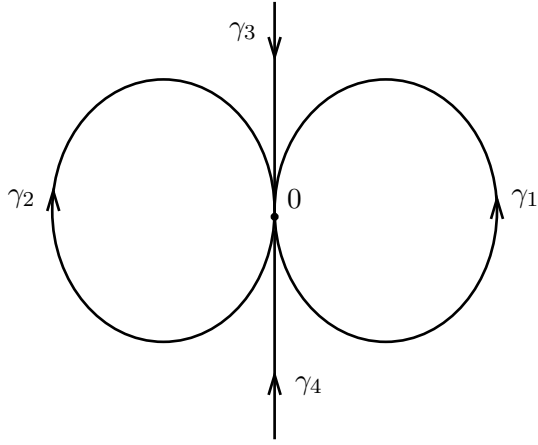
\begin{figure}[h]
\centering

\begin{tikzpicture}[x=0.75pt,y=0.75pt,yscale=-1,xscale=1]

\draw  [line width=1]  (215,147) .. controls (215,110.55) and (239.85,81) .. (270.5,81) .. controls (301.15,81) and (326,110.55) .. (326,147) .. controls (326,183.45) and (301.15,213) .. (270.5,213) .. controls (239.85,213) and (215,183.45) .. (215,147) -- cycle ;
\draw  [line width=1]  (326,147) .. controls (326,110.55) and (350.85,81) .. (381.5,81) .. controls (412.15,81) and (437,110.55) .. (437,147) .. controls (437,183.45) and (412.15,213) .. (381.5,213) .. controls (350.85,213) and (326,183.45) .. (326,147) -- cycle ;
\draw [line width=1]    (326,42) -- (326,147) ;
\draw [line width=1]    (326,157) -- (326,262) ;
\draw  [line width=1]  (329,59.67) -- (326,69.67) -- (323,59.67) ;
\draw  [line width=1]  (440,151.17) -- (437,141.17) -- (434,151.17) ;
\draw  [line width=1]  (329,239.67) -- (326,229.67) -- (323,239.67) ;
\draw  [line width=1]  (218.5,147.17) -- (215.5,137.17) -- (212.5,147.17) ;
\draw  [fill={rgb, 255:red, 0; green, 0; blue, 0 }  ,fill opacity=1 ][line width=1.5]  (327,150.08) .. controls (327,149.53) and (326.55,149.08) .. (326,149.08) .. controls (325.45,149.08) and (325,149.53) .. (325,150.08) .. controls (325,150.64) and (325.45,151.08) .. (326,151.08) .. controls (326.55,151.08) and (327,150.64) .. (327,150.08) -- cycle ;

\draw (191.5,135) node [anchor=north west][inner sep=0.75pt]   [align=left] {$\gamma_{2}$};
\draw (335,229) node [anchor=north west][inner sep=0.75pt]   [align=left] {$\gamma_{4}$};
\draw (302,51) node [anchor=north west][inner sep=0.75pt]   [align=left] {$\gamma_{3}$};
\draw (443,135) node [anchor=north west][inner sep=0.75pt]   [align=left] {$\gamma_{1}$};
\draw (331,135) node [anchor=north west][inner sep=0.75pt]   [align=left] {0};
\end{tikzpicture}
\caption{The contour of integration $\Gamma_k$ in the definition of $\mathcal{P}_k(z)$, $k=1,2,3,4$.}
\label{fig:gamma-k}
\end{figure}

Define
\begin{equation} \label{eq: tilde-psi}
\widetilde\Psi(z)=  \frac{e^{\rho^2/6}}{\sqrt{2 \pi}}
\begin{pmatrix}
\mathcal{P}_2(z) & \mathcal{P}_3(z) & \mathcal{P}_1(z) \\
\mathcal{P}'_2(z) & \mathcal{P}'_3(z) & \mathcal{P}'_1(z)  \\
\mathcal{P}''_2(z) & \mathcal{P}''_3(z) & \mathcal{P}''_1(z)
\end{pmatrix}, \qquad z\in \mathbb{C}\setminus \ii \mathbb{R}_-.
\end{equation}
It is shown in \cite{Kuijlaars2011} that
\begin{equation}\label{eq:tildepsi}
\Psi(z)=\widetilde\Psi(z), \qquad \frac{\pi}{4} < \arg{z} < \frac{3\pi}{4},
\end{equation}
and the hard edge Pearcey kernel \eqref{kernel} admits the following representation in terms of $\widetilde{\Psi}$:
\begin{equation}\label{kernel1}
K_{\alpha} (x, y; \rho) =  \frac{1}{2 \pi \ii (x-y)} \begin{pmatrix}
0 & 1 & 0
\end{pmatrix}\widetilde{\Psi}(y)^{-1} \widetilde{\Psi}(x) \begin{pmatrix}
1\\
0\\
0
\end{pmatrix}, \quad x, y > 0.
\end{equation}

From \eqref{kernel1}, it is easily seen that
\begin{equation}\label{kernel2}
K_{\alpha} (x, y; \rho) = \frac{\mathbf{f}(x)^{\msf T} \mathbf{h}(y)}{x-y},
\end{equation}
where recall that the superscript $^\msf T$ denotes transpose operation,
\begin{equation}\label{def-fh}
\mathbf{f}(x) = \begin{pmatrix}
f_1(x)\\
f_2(x)\\
f_3(x)
\end{pmatrix} := \widetilde{\Psi}(x) \begin{pmatrix}
1\\
0\\
0
\end{pmatrix}, \quad \mathbf{h}(y) = \begin{pmatrix}
h_1(y)\\
h_2(y)\\
h_3(y)
\end{pmatrix} := \frac{1}{2 \pi \ii} \widetilde{\Psi}(y)^{-\msf T} \begin{pmatrix}
0\\
1\\
0
\end{pmatrix}.
\end{equation}
This integrable structure of $K_\alpha$ (in the sense of \cite{IIKS90}) particularly implies that the associated resolvent kernel is also integrable. Indeed, let $\msf R $ be the kernel of the resolvent operator $(I - \mathcal K_{s})^{-1}\mathcal K_{s}$. It then follows from \cite[Lemma 2.12]{Dei97} that
\begin{equation}\label{R}
\msf R(u, v) = \frac{\mathbf{F}(u)^{\msf T} \mathbf{H}(v)}{u-v},
\end{equation}
where
\begin{equation}\label{def-FH}
\mathbf{F}(u) = \begin{pmatrix}
F_1(u)
\\
F_2(u)
\\
F_3(u)
\end{pmatrix} := (I - \mathcal K_{s})^{-1} \mathbf{f}(u)=Y(u) \mathbf{f}(u)
\end{equation}
\begin{equation}
\mathbf{H}(v) = \begin{pmatrix}
H_1(v)\\
H_2(v)\\
H_3(v)
\end{pmatrix} := (I - \mathcal K_{s})^{-1} \mathbf{h}(v)= Y(v)^{- \msf T} \mathbf{h}(v),
\end{equation}
with
\begin{equation}\label{def-Y}
Y(z) = I - \int_0^s \frac{\mathbf{F}(t) \mathbf{h}(t)^{\msf T}}{t-z} \ud t.
\end{equation}
Moreover, $Y$ is the unique solution of the following RH problem.

\begin{rhp}\label{rhp:Y}
\hfill
\begin{itemize}
    \item [\rm (a)] $Y(z)$ is analytic in $\mathbb{C} \setminus [0, s]$.
    \item [\rm (b)] For $x \in (0, s)$, we have
    \begin{equation}\label{jump-Y}
        Y_+(x) = Y_-(x) (I-2 \pi \ii \mathbf{f}(x) \mathbf{h}(x)^{\msf T}),
    \end{equation}
    where the functions $\mathbf{f}$ and $\mathbf{h}$ are defined in \eqref{def-fh}.
    \item [\rm (c)] As $z \to \infty$, we have
\begin{equation}\label{infty-Y}
    Y(z) = I + \frac{Y_1}{z} + \Boh(z^{-2}),
\end{equation}
where the function $Y_1$ is independent of $z$.

    \item [\rm (d)] As $z \to 0$, we have
\begin{equation}\label{0-Y}
    Y(z) = \begin{cases}
    \Boh(z^{\alpha} \ln{z}), & \quad \alpha \in \mathbb{N} \cup \{0\},\\
    \Boh(z^{\alpha}), & \quad \alpha \notin \mathbb{Z}.
    \end{cases}
\end{equation}
    \item [\rm (e)] As $z \to s$, we have
    $
        Y(z) = \Boh(\ln{(z-s)}).
    $
\end{itemize}
\end{rhp}

The RH problem that is related to the partial derivatives of $F$ is then constructed by using the functions $\Psi$ and $Y$.
Let
\begin{equation}\label{def:sigmais}
\Sigma_0^{(s)}=(s,+\infty), \qquad \Sigma_1^{(s)}=s+e^{\frac{\pi}{4}\ii}(0,+\infty),\qquad \Sigma_5^{(s)}=s+e^{-\frac{\pi}{4}\ii}(0,+\infty),
\end{equation}
which are parallel to the rays $\Sigma_i$, $i=0,1,5$, respectively. Clearly, the rays $\Sigma_1^{(s)}$, $\Sigma_2$, $\Sigma_4$, $\Sigma_5^{(s)}$ and $\mathbb{R}$ are the boundaries of six regions $\msf D_i$, $i=1,\ldots,6$; see Figure \ref{figure-X} for an illustration.
We now define
\begin{equation}\label{def-X}
    X(z) = \begin{cases}
    Y(z) \widetilde{\Psi}(z), & \quad z \in \msf D_2, \\
    Y(z) \widetilde{\Psi}(z)
    \begin{pmatrix}
    1 & -1 & 0
    \\
    0 & 1 & 0
    \\
    0 & 0 & 1
    \end{pmatrix}, & \quad z \in \msf D_5, \\
    Y(z) \Psi(z), & \quad z \in \mathbb{C} \setminus \{\msf D_2 \cup \msf D_5\},
    \end{cases}
\end{equation}
where $\widetilde{\Psi}(z)$ is given in \eqref{eq: tilde-psi}. With the aid of RH problems \ref{rhp-Psi} and \ref{rhp:Y}, it is readily seen that $X$ solves the following RH problem (see \cite[Proposition 3.5]{DXZ22a}).

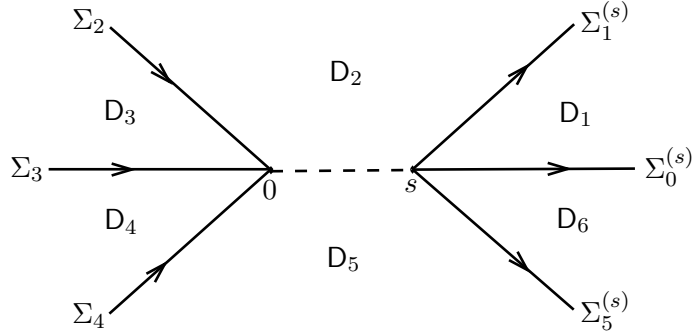
\begin{figure}[t]
\centering

\begin{tikzpicture}[x=0.75pt,y=0.75pt,yscale=-1,xscale=1]

\draw    [line width=1](139.53,160.53) -- (250.53,160.53) ;
\draw    [line width=1](170.2,89.1) -- (250.65,161.03) ;
\draw   [line width=1] (169.86,233.16) -- (250.53,160.53) ;
\draw  [line width=1]  (171,163.67) -- (181,160.67) -- (171,157.67) ;
\draw  [line width=1]  (192.59,217.32) -- (197.54,208.13) -- (188.34,213.08) ;
\draw  [line width=1]  (190.68,111.59) -- (199.87,116.54) -- (194.92,107.34) ;
\draw  [fill={rgb, 255:red, 0; green, 0; blue, 0 }  ,fill opacity=1 ][line width=1.5]  (250.42,161.03) .. controls (250.42,160.76) and (250.47,160.53) .. (250.53,160.53) .. controls (250.6,160.53) and (250.65,160.76) .. (250.65,161.03) .. controls (250.65,161.31) and (250.6,161.53) .. (250.53,161.53) .. controls (250.47,161.53) and (250.42,161.31) .. (250.42,161.03) -- cycle ;
\draw   [line width=1] (321.09,160.7) -- (432.2,160.2) ;
\draw    [line width=1](321.31,160.7) -- (401.65,231.14) ;
\draw    [line width=1](321.09,160.7) -- (401.87,87.57) ;
\draw  [line width=1]  (388.67,163.33) -- (398.67,160.33) -- (388.67,157.33) ;
\draw  [line width=1]  (373.25,117.99) -- (378.2,108.8) -- (369.01,113.75) ;
\draw  [line width=1]  (369.34,206.25) -- (378.54,211.2) -- (373.59,202.01) ;
\draw  [fill={rgb, 255:red, 0; green, 0; blue, 0 }  ,fill opacity=1 ][line width=1.5]  (321.09,160.7) .. controls (321.09,160.42) and (321.14,160.2) .. (321.2,160.2) .. controls (321.26,160.2) and (321.31,160.42) .. (321.31,160.7) .. controls (321.31,160.98) and (321.26,161.2) .. (321.2,161.2) .. controls (321.14,161.2) and (321.09,160.98) .. (321.09,160.7) -- cycle ;
\draw  [line width=1][dash pattern={on 4.5pt off 4.5pt}]  (250.53,161.53) -- (321.55,160.91) ;

\draw (245,164.78) node [anchor=north west][inner sep=0.75pt]   [align=left] {0};
\draw (150,78.78) node [anchor=north west][inner sep=0.75pt]   [align=left] {$\Sigma_2$};
\draw (119,153) node [anchor=north west][inner sep=0.75pt]   [align=left] {$\Sigma_3$};
\draw (150,226.78) node [anchor=north west][inner sep=0.75pt]   [align=left] {$\Sigma_4$};
\draw (315.67,164.44) node [anchor=north west][inner sep=0.75pt]   [align=left] {$s$};
\draw (403.67,220.44) node [anchor=north west][inner sep=0.75pt]   [align=left] {$\Sigma_5^{(s)}$};
\draw (403.67,74.44) node [anchor=north west][inner sep=0.75pt]   [align=left] {$\Sigma_1^{(s)}$};
\draw (435.67,148) node [anchor=north west][inner sep=0.75pt]   [align=left] {$\Sigma_0^{(s)}$};

\draw(165.67,124.4) node [anchor=north west][inner sep=0.75pt]    {$\msf D_3$};
\draw (166,179.4) node [anchor=north west][inner sep=0.75pt]    {$\msf D_4$};
\draw (278.33,104.73) node [anchor=north west][inner sep=0.75pt]    {$\msf D_2$};
\draw (277,198.07) node [anchor=north west][inner sep=0.75pt]    {$\msf D_5$};
\draw (393,125.4) node [anchor=north west][inner sep=0.75pt]    {$\msf D_1$};
\draw (391.67,178.73) node [anchor=north west][inner sep=0.75pt]    {$\msf D_6$};
\end{tikzpicture}
\caption{Regions $\msf D_{i}$, $i=1,\ldots,6$, and the jump contours for the RH problem for $X$.}
\label{figure-X}
\end{figure}
\begin{rhp}\label{rhp-X}
\hfill
\begin{itemize}
    \item [\rm (a)] $X (z)$ is analytic in $\mathbb{C} \setminus \Sigma_{X}$,
    where
    \begin{equation}
    \Sigma_X:=\cup_{i=2,3,4} \Sigma_{i} \cup \{0\} \cup_{i=0,1,5} \Sigma_{i}^{(s)}\cup\{s\},
    \end{equation}
    see the solid lines in Figure \ref{figure-X}.
    \item [\rm (b)] For $z \in \Sigma_{X}\setminus \{0,s\}$, we have
    \begin{equation}
        X_+(z) = X_-(z) J_{X}(z),
    \end{equation}
    where
    \begin{equation}\label{jump-X}
        J_{X}(z) = \begin{cases}
        J_{\Psi}(z), &\quad z \in \cup_{i=2}^4\Sigma_i,\\
        \begin{pmatrix}
	    0 & 1 & 0\\
	    -1 & 0 & 0\\
	    0 & 0 & 1
	\end{pmatrix}, &\quad z \in \Sigma_0^{(s)},\\
	\begin{pmatrix}
	     1 & 0 & 0\\
	     1 & 1 & 0\\
	     0 & 0 & 1
	\end{pmatrix}, &\quad z \in \Sigma_1^{(s)},\\
	\begin{pmatrix}
	     1 & 0 & 0\\
	     1 & 1 & 0\\
	     0 & 0 & 1
	\end{pmatrix}, &\quad z \in \Sigma_5^{(s)},
        \end{cases}
    \end{equation}
    with $J_\Psi$ given in \eqref{jump-Psi}.

    \item [\rm (c)] As $z \to \infty$ with $z\in \mathbb{C}\setminus \Sigma_X$, we have
    \begin{multline}\label{infty-X}
        X(z)  = \frac{\ii z^{-\alpha/3}}{\sqrt{3}}\Psi_{0} \left(I + \frac{X_1}{z} + \Boh(z^{-2})\right)
        \diag{(z^{\frac{1}{3}}, 1, z^{-\frac{1}{3}})}
        \\
        \times L_{\pm} \diag{(e^{\pm \frac{\alpha \pi }{3}\ii}, e^{\mp \frac{\alpha \pi }{3}\ii}, 1)} e^{\Theta (z)}, \quad \pm \Im{z} > 0,
    \end{multline}
    where $\Psi_{0}$, $L_{\pm}$ and $\Theta (z)$ are given in \eqref{def-Psi-0}, \eqref{def-L+-} and \eqref{def-theta}, respectively, and
    \begin{equation}\label{def-X1}
    X_1 = \Psi_1+ \Psi_{0}^{-1} Y_1 \Psi_{0}
    \end{equation}
    with $\Psi_1$ and $Y_1$ given in \eqref{def-Psi-0} and \eqref{infty-Y}.
    \item [\rm (d)] As $z \to 0$, we have
    \begin{equation}\label{0-X}
    X(z)=\begin{cases}
            \Boh(z^{-\alpha}), & \quad \alpha>0,\\
            \Boh(\ln{z}), & \quad \alpha=0,\\
            \Boh(1), & \quad -1 < \alpha <0.
        \end{cases}
    \end{equation}
    \item [\rm (e)]As $z \to s$, we have $X(z) = \Boh(\ln{(z-s)})$.
\end{itemize}
\end{rhp}

The relationship between $X$ and $F$ is given in the following lemma through some differential identities.
\begin{lemma}\label{pro-de}
Let $F$ be the function defined in \eqref{def-F}. We have
\begin{align}\label{ds-F}
\frac{\partial}{\partial s} F(s; \rho) &= - \frac{1}{2 \pi \ii} \lim_{z \to s}\left(X(z)^{-1} X'(z)\right)_{21}, \qquad z \in  \msf D_2,
\\
\label{drho-F} \frac{\partial}{\partial \rho} F(s; \rho) & =  - (X_1)_{31}+ \frac{\rho (\rho^2 + 9\alpha)}{27},
\end{align}
where $X_1$ is given in \eqref{infty-X}.
\end{lemma}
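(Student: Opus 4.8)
The plan is to treat the two identities in Lemma~\ref{pro-de} separately. Identity \eqref{ds-F} is, up to the present notation, \cite[Proposition~3.5]{DXZ22a}, so I would only recall the mechanism: one starts from the classical formula $\partial_s F(s;\rho)=-\msf R(s,s)$ for the resolvent kernel \eqref{R} of the operator on $(0,s)$, the on-diagonal value being unambiguous because $\mathbf f(x)^{\msf T}\mathbf h(x)=\tfrac{1}{2\pi\ii}\,\mathbf e_1^{\msf T}\mathbf e_2=0$ (here $\mathbf e_1,\mathbf e_2,\mathbf e_3$ denote the standard basis vectors). Combining \eqref{def-X}, \eqref{def-fh} and \eqref{def-FH}, for $z\in\msf D_2$ one has $\mathbf F(z)=X(z)\mathbf e_1$ and $\mathbf H(z)=\tfrac{1}{2\pi\ii}X(z)^{-\msf T}\mathbf e_2$, so \eqref{R} rewrites as $\msf R(u,v)=\tfrac{1}{2\pi\ii}(X(v)^{-1}X(u))_{21}/(u-v)$; since the numerator vanishes at $u=v$, letting $u,v\to s$ and applying L'H\^opital's rule yields \eqref{ds-F}.

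The new identity \eqref{drho-F} is the focus. I would start from $\partial_\rho F(s;\rho)=-\tr\big((I-\mathcal K_s)^{-1}\partial_\rho\mathcal K_s\big)$, which is legitimate since $\mathcal K_s$ depends analytically on $\rho$ as a trace-class operator. The heart of the matter is to identify $\partial_\rho\mathcal K_s$, for which I would first produce a ``Lax pair in $\rho$'' for $\widetilde\Psi$. Differentiating the integral representations \eqref{integral-p} in $\rho$ inserts a factor $t^{-1}$ under the integral sign, which yields at once $\partial_\rho\mathcal P_k'=\mathcal P_k$ and $\partial_\rho\mathcal P_k''=\mathcal P_k'$, while one integration by parts on each $\gamma_k$ (with vanishing boundary terms, by the prescribed endpoint behaviour of the contours) gives $\partial_\rho\mathcal P_k=z\mathcal P_k''+(\alpha-1)\mathcal P_k'-\rho\mathcal P_k$; these three relations are mutually compatible precisely through the third-order ODE \eqref{3rd-P}. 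Taking into account the scalar prefactor $e^{\rho^2/6}$ in \eqref{eq: tilde-psi}, they assemble into
\[
\partial_\rho\widetilde\Psi(z)=\widetilde B(z)\,\widetilde\Psi(z),\qquad
\widetilde B(z)=\begin{pmatrix}-\tfrac{2\rho}{3} & \alpha-1 & z\\ 1 & \tfrac{\rho}{3} & 0\\ 0 & 1 & \tfrac{\rho}{3}\end{pmatrix},
\]
whose decisive feature is that $\widetilde B(x)-\widetilde B(y)=(x-y)E_{13}$, where $E_{ij}$ denotes the $3\times3$ matrix unit.

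Using $\partial_\rho\mathbf f=\widetilde B\mathbf f$ and $\partial_\rho\mathbf h=-\widetilde B^{\msf T}\mathbf h$ (the latter from $\partial_\rho\widetilde\Psi^{-1}=-\widetilde\Psi^{-1}\widetilde B$) together with the integrable form \eqref{kernel2}, the $\rho$-derivative of the kernel collapses:
\[
\partial_\rho K_\alpha(x,y;\rho)=\mathbf f(x)^{\msf T}\,\frac{\widetilde B(x)^{\msf T}-\widetilde B(y)^{\msf T}}{x-y}\,\mathbf h(y)=\mathbf f(x)^{\msf T}E_{31}\,\mathbf h(y)=f_3(x)\,h_1(y).
\]
Thus $\partial_\rho\mathcal K_s$ is the rank-one operator with kernel $f_3(x)h_1(y)$, and hence
\[
\tr\big((I-\mathcal K_s)^{-1}\partial_\rho\mathcal K_s\big)=\int_0^s F_3(t)\,h_1(t)\ud t,
\]
with $F_3$ the third component of $\mathbf F$ in \eqref{def-FH}. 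Expanding \eqref{def-Y} as $z\to\infty$ identifies this integral with $(Y_1)_{31}$, so $\partial_\rho F=-(Y_1)_{31}$. Finally, since $\Psi_0$ in \eqref{def-Psi-0} is unit upper triangular (hence so is $\Psi_0^{-1}$), the relation \eqref{def-X1} gives $(X_1)_{31}=(\Psi_1)_{31}+(Y_1)_{31}$; as $(\Psi_1)_{31}=\pi_3(\rho)+\tfrac{2\rho}{3}=\tfrac{\rho(\rho^2+9\alpha)}{27}$ by \eqref{def-pi-3}, this is exactly \eqref{drho-F}.

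Once the $\rho$-Lax pair is in hand the remainder is essentially algebra, so the step I expect to require the most care is the Lax pair itself: selecting the right function to integrate by parts on each $\gamma_k$ and verifying that the boundary contributions genuinely vanish for all four contours. The structural windfall that keeps the proof short is that $\partial_\rho\mathcal K_s$ reduces to a rank-one operator, which is what permits $\partial_\rho F$ to be read off from a single entry of $Y_1$ (equivalently $X_1$). A secondary, routine point is the justification of differentiating under the Fredholm determinant, which follows from the analytic dependence of $K_\alpha$ on $(s,\rho)$.
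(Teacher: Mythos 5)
Your proof is correct and follows essentially the same route as the paper: establish the $\rho$-Lax pair $\partial_\rho\widetilde\Psi=\widetilde B\widetilde\Psi$ (Proposition~\ref{rho-Psi}), deduce that $\partial_\rho K_\alpha$ is the rank-one kernel $f_3(x)h_1(y)$, read off $\partial_\rho F=-(Y_1)_{31}$ from \eqref{def-Y}, and finish via \eqref{def-X1} and the unit-upper-triangular structure of $\Psi_0$, with $(\Psi_1)_{31}=\pi_3(\rho)+2\rho/3=\rho(\rho^2+9\alpha)/27$. The only difference in detail is that you obtain $\partial_\rho\mathcal P_k=z\mathcal P_k''+(\alpha-1)\mathcal P_k'-\rho\mathcal P_k$ directly by one integration by parts of $\frac{\ud}{\ud t}\bigl[t^{\alpha-1}e^{zt+\rho/t+1/(2t^2)}\bigr]$ along $\gamma_k$, whereas the paper differentiates the ODE \eqref{3rd-P} in $\rho$ and then substitutes $\partial_\rho\mathcal P_k'''=\mathcal P_k''$, $\partial_\rho\mathcal P_k''=\mathcal P_k'$, $\partial_\rho\mathcal P_k'=\mathcal P_k$; the two are mathematically equivalent and yield the same $\widetilde B$, so this is a presentational variant rather than a different argument. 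Your observation that $\widetilde B(x)-\widetilde B(y)=(x-y)E_{13}$ makes the rank-one collapse especially transparent, and the note that $\Psi_0$, hence $\Psi_0^{-1}$, is unit upper triangular cleanly gives $(\Psi_0^{-1}Y_1\Psi_0)_{31}=(Y_1)_{31}$; the paper performs these as ``straightforward calculation.'' One small slip: for \eqref{ds-F} the paper cites \cite[Proposition~3.6]{DXZ22a}, not Proposition~3.5.
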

To prove Lemma \ref{pro-de}, we need the following proposition.
\begin{proposition}\label{rho-Psi}
Let $\Psi$ be the unique solution to the RH problem \ref{rhp-Psi}. We have
\begin{equation}\label{partial-rho-Psi}
\frac{\partial \Psi}{\partial \rho} = \begin{pmatrix}
-2 \rho/3 & \alpha - 1 & z\\
1 & \rho/3 & 0 \\
0 & 1 & \rho/3
\end{pmatrix} \Psi.
\end{equation}
\end{proposition}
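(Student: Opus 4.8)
The plan is to exploit the structural fact that the jump matrices $J_\Psi$ in \eqref{jump-Psi} do not involve $\rho$. Consequently the matrix
\begin{equation*}
\widehat{R}(z):=\Bigl(\frac{\partial\Psi}{\partial\rho}\Bigr)(z)\,\Psi(z)^{-1},
\end{equation*}
which is a priori only analytic on $\mathbb{C}\setminus\Sigma_\Psi$, satisfies $\widehat{R}_+(z)=\frac{\partial\Psi_-}{\partial\rho}(z)\,J_\Psi(z)\,J_\Psi(z)^{-1}\,\Psi_-(z)^{-1}=\widehat{R}_-(z)$ along every ray $\Sigma_k$, hence extends to a function analytic on all of $\mathbb{C}\setminus\{0\}$. (Differentiating $\Psi$ in $\rho$ is legitimate because, by \cite{Kuijlaars2011}, $\Psi$ is built explicitly from the functions $\mathcal{P}_k$ of \eqref{integral-p}, which are entire in $\rho$.) By the identity theorem it therefore suffices to verify the claimed formula on the single sector $\frac{\pi}{4}<\arg z<\frac{3\pi}{4}$, where $\Psi=\widetilde\Psi$ by \eqref{eq:tildepsi}.

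On that sector I would argue directly from the integral representations. Writing $\phi(t)=zt+\rho/t+\tfrac{1}{2t^2}$, the function $\mathcal{P}_k^{(j)}(z)$ equals a $k$-dependent constant times $\int_{\gamma_k}t^{\alpha-3+j}e^{\phi(t)}\ud t$ for $j=0,1,2$, while $\frac{\partial}{\partial\rho}\mathcal{P}_k(z)$ equals the same constant times $\int_{\gamma_k}t^{\alpha-4}e^{\phi(t)}\ud t$. Integrating the exact differential $\frac{\ud}{\ud t}\bigl(t^{\alpha-1}e^{\phi(t)}\bigr)$ along $\gamma_k$ — whose endpoints are arranged, as in \cite{Kuijlaars2011}, so that the boundary contributions vanish, by the very mechanism that yields \eqref{3rd-P} — gives the first-order relation
\begin{equation*}
\frac{\partial}{\partial\rho}\mathcal{P}_k(z)=z\,\mathcal{P}_k''(z)+(\alpha-1)\,\mathcal{P}_k'(z)-\rho\,\mathcal{P}_k(z).
\end{equation*}
Differentiating this in $z$ and using \eqref{3rd-P} to replace $z\,\mathcal{P}_k'''(z)$ by $-\alpha\,\mathcal{P}_k''(z)+\rho\,\mathcal{P}_k'(z)+\mathcal{P}_k(z)$ makes the right-hand side collapse to $\mathcal{P}_k(z)$, so that $\frac{\partial}{\partial\rho}\mathcal{P}_k'(z)=\mathcal{P}_k(z)$; one further $z$-derivative then gives $\frac{\partial}{\partial\rho}\mathcal{P}_k''(z)=\mathcal{P}_k'(z)$.

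Writing $\widetilde\Psi=\frac{e^{\rho^2/6}}{\sqrt{2\pi}}M(z)$ with $M(z)$ the matrix whose successive rows are the $\mathcal{P}_k$, the $\mathcal{P}_k'$ and the $\mathcal{P}_k''$, the three identities above say precisely that $\frac{\partial}{\partial\rho}M(z)=N(z)\,M(z)$, where $N(z)$ is the matrix with rows $(-\rho,\,\alpha-1,\,z)$, $(1,\,0,\,0)$ and $(0,\,1,\,0)$; this holds regardless of the ordering of the columns of $M$. Since $\frac{\partial}{\partial\rho}e^{\rho^2/6}=\frac{\rho}{3}e^{\rho^2/6}$, it follows that $\frac{\partial}{\partial\rho}\widetilde\Psi=\bigl(N(z)+\tfrac{\rho}{3}I\bigr)\widetilde\Psi$, and $N(z)+\tfrac{\rho}{3}I$ is exactly the matrix displayed in \eqref{partial-rho-Psi}. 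Together with the first paragraph this proves the proposition.

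The one point demanding care is the vanishing of the boundary terms in the integration by parts, which rests on the precise description of the contours $\gamma_k$ and on the decay of $t\mapsto t^{\alpha-1}e^{\phi(t)}$ at their endpoints; since the identical argument already produces \eqref{3rd-P}, this introduces no new obstacle. An alternative, more self-contained but computationally heavier, route bypasses the integral representations: using \eqref{infty-Psi} and the behaviour of $\Psi$ near $z=0$, one shows that $\widehat{R}$ is in fact entire with at most linear growth at infinity, hence a matrix polynomial of degree at most one, and then reads off its two coefficients from \eqref{infty-Psi} together with \eqref{def-Psi-0}, \eqref{def-pi-3} and \eqref{def-pi-6}; the main effort there is to track, and check the cancellation of, the fractional powers of $z$ generated by conjugation with $\diag(z^{1/3},1,z^{-1/3})L_\pm$.
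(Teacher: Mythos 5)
Your proof is correct and follows essentially the same route as the paper: both reduce to $\widetilde\Psi$ via the $\rho$-independence of the jump matrices, then establish from the integral representations and the ODE \eqref{3rd-P} the relations $\partial_\rho\mathcal{P}_k = z\mathcal{P}_k'' + (\alpha-1)\mathcal{P}_k' - \rho\mathcal{P}_k$, $\partial_\rho\mathcal{P}_k' = \mathcal{P}_k$, $\partial_\rho\mathcal{P}_k'' = \mathcal{P}_k'$, and finally fold in the $e^{\rho^2/6}$ prefactor. The only difference is one of ordering: you derive the first relation by integrating the exact differential $\tfrac{d}{dt}\bigl(t^{\alpha-1}e^{\phi(t)}\bigr)$ and deduce the other two from the ODE, whereas the paper reads $\partial_\rho\mathcal{P}_k^{(j)}=\mathcal{P}_k^{(j-1)}$ for $j\ge1$ directly off \eqref{integral-p} and substitutes them into the $\rho$-differentiated ODE to obtain the first.
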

\begin{proof}
Since the jumps of $\Psi$ are constant matrices, by \eqref{eq:tildepsi}, it suffices to show \eqref{partial-rho-Psi} holds for $\widetilde \Psi$.
Recall that $\mathcal{P}_1$,  $\mathcal{P}_2$ and $\mathcal{P}_3$ are three linearly independent solutions of \eqref{3rd-P}, by differentiating both sides of \eqref{3rd-P} with respect to $\rho$, it follows that for $k = 1, 2, 3$,
\begin{equation}
\frac{\partial \mathcal{P}_k}{\partial \rho} =
z \frac{\partial \mathcal{P}'''_k}{\partial \rho}+\alpha \frac{\partial \mathcal{P}''_k}{\partial \rho}-\mathcal{P}'_k-\rho \frac{\partial \mathcal{P}'_k}{\partial \rho}.
\end{equation}
By \eqref{integral-p}, it is readily seen that
\begin{equation}
\frac{\partial \mathcal{P}'''_k}{\partial \rho} = \mathcal{P}''_k, \qquad \frac{\partial \mathcal{P}''_k}{\partial \rho} = \mathcal{P}'_k, \qquad
\frac{\partial \mathcal{P}'_k}{\partial \rho} = \mathcal{P}_k.
\end{equation}
Thus,
\begin{equation}
\frac{\partial \mathcal{P}_k}{\partial \rho} =
z \mathcal{P}_k'' + (\alpha-1) \mathcal{P}_k'  - \rho \mathcal{P}_k.
\end{equation}
A combination of the above two formulas and \eqref{eq: tilde-psi} gives us \eqref{partial-rho-Psi} for $\widetilde \Psi$.


This finishes the proof of Proposition \ref{rho-Psi}.
\end{proof}

\paragraph{Proof of Lemma \ref{pro-de}} The proof of \eqref{ds-F} can be found in \cite[Proposition 3.6]{DXZ22a}.

To show \eqref{drho-F}, we note from \eqref{def-fh} and \eqref{partial-rho-Psi} that
\begin{equation}\label{partia-rho-f}
\frac{\partial \mathbf{f}}{\partial \rho}(x) = \begin{pmatrix}
-2\rho/{3} & \alpha -1 & x\\
1 & \rho/3 & 0\\
0 & 1 & \rho/3
\end{pmatrix} \mathbf{f}(x), \qquad
\frac{\partial \mathbf{h}}{\partial \rho}(y) =
-\begin{pmatrix}
- 2 \rho/3 & 1 & 0 \\
\alpha -1 & \rho/3 & 1\\
y & 0 & \rho/3
\end{pmatrix} \mathbf{h}(y).
\end{equation}
This, together with \eqref{kernel2}, implies that
\begin{equation}\label{partial-rho-kernel}
\frac{\partial}{\partial \rho} K_{\alpha} (x, y; \rho) = \frac{\frac{\partial \mathbf{f}^{\msf T}}{\partial \rho}(x)\mathbf{h}(y)+\mathbf{f}(x)^{\msf T}\frac{\partial \mathbf{h}}{\partial \rho}(y)}{x-y} = \mathbf{f}(x)^{\msf T}
\begin{pmatrix}
0 & 0 & 0\\
0 & 0 & 0\\
1 & 0 & 0
\end{pmatrix}\mathbf{h}(y)=f_3(x)h_1(y).
\end{equation}
Thus, it is readily seen from \eqref{def-F} and \eqref{def-FH} that
\begin{equation}\label{partial-rho-F}
\frac{\partial}{\partial \rho} F(s; \rho) = \frac{\partial}{\partial \rho} \ln{\det{(I - \mathcal K_{s})}}
 = -\tr{\left((I - \mathcal K_{s})^{-1} \frac{\partial}{\partial \rho}K_{\alpha}\right)} = -\int_0^s F_3(v) h_1(v) \ud v.
\end{equation}
On the other hand, it follows from \eqref{infty-Y} and \eqref{def-Y} that
\begin{equation}
Y_1 = \int_0^s \mathbf{F}(w) \mathbf{h}(w)^{\msf T} \ud w = \int_0^s \begin{pmatrix}
F_1(w)\\
F_2(w)\\
F_3(w)
\end{pmatrix} \begin{pmatrix}
h_1(w) & h_2(w) & h_3(w)
\end{pmatrix} \ud w.
\end{equation}
The above two formulas gives us
\begin{equation}\label{eq:FY1}
\frac{\partial}{\partial \rho} F(s; \rho) = -(Y_1)_{31}.
\end{equation}
We finally arrive at the differential identity \eqref{drho-F} by combining \eqref{eq:FY1}, \eqref{def-Psi-0}, \eqref{def-X1} and a straightforward calculation.

This completes the proof of Lemma \ref{pro-de}.
\qed

\section{Meromorphic $\lambda$-functions on a Riemann surface} \label{sec:lambdafunctions}
It is the aim of this section to introduce the so-called $\lambda$-functions and to investigate their properties. These auxiliary functions
will be used to `partially' normalize the large-$z$ asymptotics of the scaled RH problem \ref{rhp-X} for $X$. In particular, the analytic continuation of $\lambda$-functions defines a meromorphic on a specific Riemann surface. This Riemann surface consists of three sheets $\mathcal{R}_j$, $j=1,2,3$, given by
\begin{equation*}
    \mathcal{R}_1 = \mathbb{C}\setminus [1, +\infty), \qquad \mathcal{R}_2 = \mathbb{C}\setminus ((-\infty, 0] \cup [1, +\infty)), \qquad \mathcal{R}_3 = \mathbb{C}\setminus (-\infty, 0].
\end{equation*}
The sheet $\mathcal R_1$ is connected to the sheet $\mathcal R_2$ through $[1,+\infty)$ and $\mathcal R_2$ is connected to $\mathcal R_3$ through $(-\infty,0]$. All these gluings are performed in the usual crosswise manner; see Figure \ref{fig:RS}. By adding a common point at $\infty$ to the three sheets, we obtain a compact Riemann surface of genus zero denoted by $\mathcal R$.

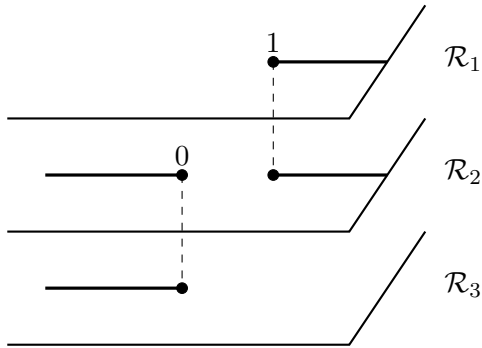
\begin{figure}[h]
\begin{center}
\begin{tikzpicture}

%
%
\draw[thick] (-0.5,4.5)--(4,4.5)--(5,6);
\draw[thick] (-0.5,3)--(4,3)--(5,4.5);
\draw[thick] (-0.5,1.5)--(4,1.5)--(5,3);

\node[above] at (5.5,5) {$\mathcal R_1$};
\node[above] at (5.5,3.5) {$\mathcal R_2$};
\node[above] at (5.5,2) {$\mathcal R_3$};

%
%
\draw[line width=1.2pt] (0,3.75)--(1.8,3.75);
\draw[line width=1.2pt] (4.5,3.75)--(3,3.75);
\draw[line width=1.2pt] (4.5,5.25)--(3,5.25);
\draw[line width=1.2pt] (0,2.25)--(1.8,2.25);
%
%
\filldraw [black] (3,5.25) circle (2pt) node [above] (q1) {$1$};
\filldraw [black] (1.8,3.75) circle (2pt) node [above] (q2) {$0$};
\filldraw [black] (3,3.75) circle (2pt) node [above] (02) {};
\filldraw [black] (1.8,2.25) circle (2pt) node [above] (03) {};

%
%
\draw[dashed] (3,5.25)--(3,3.75);
\draw[dashed] (1.8,3.75)--(1.8,2.25);

\end{tikzpicture}
\end{center}
\caption{The Riemann surface $\mathcal R$.}
\label{fig:RS}
\end{figure}

For each $j=1,2,3$, we will construct a function $\lambda_j$, which is analytic on $\mathcal{R}_j$ and admits an analytic continuation across the cuts. The construction, however, is indirect in the sense that the $\lambda$-functions are built in terms of the $w$-functions introduced next.

\subsection{The $w$-functions}
%
%
The $w$-functions are three solutions of the algebraic equation
\begin{equation}\label{algebraic equation}
    w(z)^3-\frac{3}{2} w(z)^2+\frac{z}{2}=0.
\end{equation}
It is straightforward to check that the discriminant of \eqref{algebraic equation} is $-27(z-1)z/4$. Its two roots along with the point at infinity constitute the three branch points of the Riemann surface $\mathcal{R}$. By using Cardano's formula, the three solutions of \eqref{algebraic equation} are explicitly given by
\begin{align}
   w_1(z) &=\frac{1}{2}\left(\eta (z)^{\frac{1}{3}}+ \eta (z)^{-\frac{1}{3}}+1\right),\label{def-w1}\\
   w_2(z) &=\frac{1}{2}\left(\omega^{-1} \eta (z)^{\frac{1}{3}}+\omega \eta (z)^{-\frac{1}{3}}+1\right),\label{def-w2}\\
   w_3(z) &=\frac{1}{2}\left(\omega \eta (z)^{\frac{1}{3}}+\omega^{-1} \eta (z)^{-\frac{1}{3}}+1\right),\label{def-w3}
   \end{align}
    where $\omega=e^{2 \pi \ii /3}$,
    \begin{equation}\label{def-eta}
        \eta (z) = 2\sqrt{z(z-1)}+1-2z, \quad z \in \mathbb{C} \setminus ((-\infty, 0] \cup [1, +\infty)),
    \end{equation}
    with
    \begin{equation}\label{arg-eta}
        \arg \eta(z) \in (0, \pi).
    \end{equation}
Indeed, it is readily seen that $\eta (z)$ satisfies the quadratic equation
\begin{equation}\label{eq:etaequ}
\eta (z)^2 + 4z \eta (z) - 2\eta (z) +1=0.
\end{equation}
Thus, for $j=1,2,3$, we obtain from \eqref{def-w1}--\eqref{def-w3} that
\begin{equation}
w_j(z)^3-\frac{3}{2} w_j(z)^2+\frac{z}{2}= \frac{1}{8} \left(\eta (z) + \eta (z)^{-1} + 4z -2\right) = 0,
\end{equation}
where we have made use of \eqref{eq:etaequ} and the fact that $\eta(z)\neq 0$ in the last step. The condition \eqref{arg-eta} follows from the observation that $\Im \eta (z)>0$ for $z \in \mathbb{C} \setminus ((-\infty, 0] \cup [1, +\infty))$.
%
%
%

Some properties of the $w$-functions are collected in the following proposition.
\begin{proposition}\label{prop:wfunction}
The functions $w_j (z)$, $j = 1, 2, 3$, given in \eqref{def-w1}--\eqref{def-w3} satisfy the following properties.
\begin{enumerate}
\item [\rm (i)] $w_j(z)$ is analytic on the sheet $\mathcal{R}_j$ and satisfies
\begin{align}
w_{2, \pm}(x) &= w_{3, \mp}(x), &&  x \in (-\infty, 0), \label{w2-w3}\\
w_{2, \pm}(x) &= w_{1, \mp}(x), &&  x \in (1, \infty). \label{w2-w1}
\end{align}
Here, we orient $(-\infty, 0)$ and $(1, \infty)$ from the left to the right. Hence, the function
\begin{equation}\label{def:w}
w: \cup_{j=1}^3 \mathcal {R}_j \to \overline{\mathbb{C}},\qquad w|_{\mathcal {R}_j}=w_j,
\end{equation}
extends to a meromorphic function on the Riemann surface $\mathcal R$. This function is a bijection.

\item [\rm (ii)] As $z \to \infty$ with $-\pi < \arg{z} < \pi$, we have
\begin{equation}\label{infty-w2}
w_2(z) = \begin{cases}
-2^{-\frac{1}{3}} \omega^2 z^{\frac{1}{3}} + \frac{1}{2} - \frac{\omega}{2^{5/3}} z^{-\frac{1}{3}} + \frac{\omega^2}{6 \cdot 2^{1/3}} z^{-\frac{2}{3}} - \frac{\omega}{6 \cdot 2^{5/3}} z^{-\frac{4}{3}} + \Boh (z^{-\frac{5}{3}}), & \quad \Im{z} > 0,\\
-2^{-\frac{1}{3}} \omega z^{\frac{1}{3}} + \frac{1}{2} - \frac{\omega^2}{2^{5/3}} z^{-\frac{1}{3}} + \frac{\omega}{6 \cdot 2^{1/3}} z^{-\frac{2}{3}} - \frac{\omega^2}{6 \cdot 2^{5/3}} z^{-\frac{4}{3}} + \Boh (z^{-\frac{5}{3}}), & \quad \Im{z} < 0,
\end{cases}
\end{equation}
and
\begin{multline}\label{infty-w3}
w_3(z) = -2^{-\frac{1}{3}} z^{\frac{1}{3}} + \frac{1}{2} - \frac{1}{2^{5/3}} z^{-\frac{1}{3}} + \frac{1}{6 \cdot 2^{1/3}} z^{-\frac{2}{3}}
\\
- \frac{1}{6 \cdot 2^{5/3}} z^{-\frac{4}{3}} + \Boh ( z^{-\frac{5}{3}}), \qquad z \in \mathbb{C} \setminus (-\infty, 0].
\end{multline}
\item [\rm (iii)] As $z \to 0$ with $-\pi < \arg{z} < \pi$, we have
\begin{equation}\label{0-w2}
w_2(z) = \frac{\sqrt{3}}{3} z^{\frac{1}{2}} + \frac{z}{9} + \frac{5 \sqrt{3}}{162} z^{\frac{3}{2}} + \Boh ( z^{\frac{5}{2}}),
\end{equation}
and
\begin{equation}\label{0-w3}
w_3(z) = -\frac{\sqrt{3}}{3} z^{\frac{1}{2}} + \frac{z}{9} - \frac{5 \sqrt{3}}{162} z^{\frac{3}{2}} + \Boh ( z^{\frac{5}{2}}).
\end{equation}
\item [\rm (iv)] As $z \to 1$ with $- \pi < \arg{(z-1)} < \pi$, we have
\begin{equation}\label{1-w1}
w_1(z) = \begin{cases}
1 - \frac{\ii}{\sqrt{3}} (z-1)^{\frac{1}{2}} + \frac{1}{9}(z-1) +\frac{5 \sqrt{3}\ii}{162}(z-1)^{\frac{3}{2}}
 \\
 ~ - \frac{8}{243}(z-1)^2+ \Boh ((z-1)^{\frac{5}{2}}), & \Im{z} > 0,\\
1 + \frac{\ii}{\sqrt{3}} (z-1)^{\frac{1}{2}} + \frac{1}{9}(z-1) - \frac{5 \sqrt{3}\ii}{162}(z-1)^{\frac{3}{2}}
\\
~ - \frac{8}{243}(z-1)^2+ \Boh ((z-1)^{\frac{5}{2}}), & \Im{z} < 0,
\end{cases}
\end{equation}
and
\begin{equation}\label{1-w2}
w_2(z) = \begin{cases}
1 + \frac{\ii}{\sqrt{3}} (z-1)^{\frac{1}{2}} + \frac{1}{9}(z-1) -\frac{5 \sqrt{3}\ii}{162}(z-1)^{\frac{3}{2}}
 \\
 ~ - \frac{8}{243}(z-1)^2+ \Boh ((z-1)^{\frac{5}{2}}), & \Im{z} > 0,
 \\
1 - \frac{\ii}{\sqrt{3}} (z-1)^{\frac{1}{2}} + \frac{1}{9}(z-1) + \frac{5 \sqrt{3}\ii}{162}(z-1)^{\frac{3}{2}}
 \\
 ~ - \frac{8}{243}(z-1)^2+ \Boh ((z-1)^{\frac{5}{2}}), & \Im{z} < 0.
\end{cases}
\end{equation}
\end{enumerate}
\end{proposition}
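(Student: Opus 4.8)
The four parts split into two mechanisms: (i) is a branch-tracking argument built on the closed form of $\eta$, while (ii)--(iv) are local series expansions of Cardano's formulas. For (i) I would analyze $\eta$ first. From \eqref{def-eta}--\eqref{arg-eta} and the quadratic \eqref{eq:etaequ} one checks that $\eta$ is analytic and nonvanishing on $\mathcal{R}_2$ and that $z\mapsto\eta(z)$ maps $\mathcal{R}_2$ into the open upper half-plane, with $|\eta|=1$ on $(0,1)$ and real boundary values elsewhere: $\eta_+(x)=2\sqrt{x^2-x}+1-2x>0$ on $(-\infty,0)$, and $<0$ on $(1,\infty)$ because there $2\sqrt{x^2-x}<2x-1$. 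Since the two roots of \eqref{eq:etaequ} are reciprocal and crossing either cut of $\sqrt{z(z-1)}$ swaps them, we get $\eta_+\eta_-=1$ on $(-\infty,0)$ and on $(1,\infty)$. Plugging these into \eqref{def-w1}--\eqref{def-w3}, and using that the principal cube root of a positive real is real while that of a negative real carries $e^{\pm\ii\pi/3}$ from the two sides, a direct computation gives, on $(-\infty,0)$: $w_{1,+}=w_{1,-}$ and $w_{2,\pm}=w_{3,\mp}$; and on $(1,\infty)$: $w_{3,+}=w_{3,-}$ and $w_{2,\pm}=w_{1,\mp}$. The first identity in each line lets Morera's theorem extend $w_1$ analytically across $(-\infty,0)$ and $w_3$ across $(1,\infty)$, upgrading them to functions analytic on all of $\mathcal{R}_1$ and $\mathcal{R}_3$; the second identities are exactly \eqref{w2-w3} and \eqref{w2-w1}, so the three $w_j$ glue crosswise into one function $w$ on $\mathcal R$.

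For the bijectivity claim I would invert Cardano: \eqref{algebraic equation} is the same as $z=3w^2-2w^3=w^2(3-2w)$, so $w\mapsto z$ is a degree-three rational self-map of $\overline{\mathbb C}$ whose critical points are exactly $w=0,1,\infty$ with critical values $z=0,1,\infty$. This realizes $\mathcal R$ as $\overline{\mathbb C}$ with $w$ the identity coordinate, so $\mathcal R$ has genus zero and $w$ is a bijection. The labeling of branches is then pinned down by $\eta(0)=1$, giving $w_1(0)=\tfrac32$, $w_2(0)=w_3(0)=0$, and $\eta(1)=-1$, giving $w_1(1)=w_2(1)=1$, $w_3(1)=-\tfrac12$, which matches the double roots of $3w^2-2w^3$ at $w=0$ and of $3w^2-2w^3-1$ at $w=1$ and confirms that $\mathcal{R}_2,\mathcal{R}_3$ are the sheets joined over $(-\infty,0]$ and $\mathcal{R}_1,\mathcal{R}_2$ those joined over $[1,\infty)$.

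For (ii)--(iv) I would substitute $\eta(z)=1-2z+2\sqrt{z(z-1)}$ into \eqref{def-w1}--\eqref{def-w3} and expand. Near $z=\infty$, write $\sqrt{z(z-1)}=z(1-1/z)^{1/2}$, so $\eta(z)=-\tfrac1{4z}(1+O(1/z))$; expanding $\eta^{\pm1/3}$ by the binomial series, the $\eta^{-1/3}$ term supplies the leading $z^{1/3}$ behaviour, $1+\omega+\omega^2=0$ kills the constant, and $\omega-\omega^2=\sqrt3\,\ii$ organizes the remaining coefficients, the branch of $z^{1/3}$ being fixed by $\arg\eta\in(0,\pi)$; its change across $\mathbb R$ accounts for the two cases in \eqref{infty-w2}. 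Near $z=0$ use $\sqrt{z(z-1)}=\sqrt{-z}\,(1-z)^{1/2}$ and $\eta(0)=1$, so $\eta^{\pm1/3}=1\pm\tfrac23\sqrt{-z}+O(z)$, and the same $\omega$-algebra yields \eqref{0-w2}--\eqref{0-w3}; near $z=1$ use $\sqrt{z(z-1)}=\sqrt{z-1}\,z^{1/2}$ and $\eta(1)=-1$, so $\eta^{\pm1/3}=e^{\pm\ii\pi/3}(1\mp\tfrac23\sqrt{z-1}+O(z-1))$, and collecting with $e^{\ii\pi/3}+e^{-\ii\pi/3}=1$ and $e^{-\ii\pi/3}-e^{\ii\pi/3}=-\sqrt3\,\ii$ gives \eqref{1-w1}--\eqref{1-w2}. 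Each expansion is then carried to the stated order, the sub-case split reflecting the branch change of the relevant square root across the cut.

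I expect the only genuine difficulty to be the branch bookkeeping in (i): one must verify that, with $\arg\eta\in(0,\pi)$, the cut of the principal cube root is met by $\eta(z)$ precisely as $z$ runs over $(-\infty,0)\cup(1,\infty)$ and nowhere else on $\mathcal{R}_2$, and one must correctly identify which of $w_{1,\mp},w_{2,\pm},w_{3,\mp}$ pair up on each cut. I would secure this by checking the sign of $\Im\eta$ and of the real boundary values at a few explicit test points and invoking the argument principle for $\eta\colon\mathcal{R}_2\to\{\Im(\cdot)>0\}$; the rest is routine series manipulation.
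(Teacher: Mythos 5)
Your approach matches the paper's step for step: show $\eta_+\eta_-=1$ across each cut and exploit the reciprocal relation on cube roots to obtain the sheet gluings in (i), then substitute the local series of $\eta$ near $\infty$, $0$, $1$ into Cardano's formulas for (ii)--(iv). One small slip: with the convention $\arg\eta\in(0,\pi)$ from \eqref{arg-eta}, the $+$ boundary value on $(-\infty,0)$ is the \emph{smaller} root, $\eta_+(x)=1-2x-2\sqrt{x(x-1)}\in(0,1)$, not $1-2x+2\sqrt{x(x-1)}$ as you wrote; your expression is correct on $(1,\infty)$, but $\sqrt{z(z-1)}\big|_+$ changes sign between the two rays because the boundary passes the simple zeros at $z=0,1$. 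This is harmless here since the only fact you use is the symmetric identity $\eta_+\eta_-=1$, but the bookkeeping matters when one later fixes branches. A genuine plus of your write-up: the paper asserts but does not argue that $w$ is a bijection, whereas your inversion $z=w^2(3-2w)$ (degree three on $\overline{\mathbb{C}}$, with critical points $w\in\{0,1,\infty\}$ sitting over $z\in\{0,1,\infty\}$) makes it explicit and simultaneously confirms, via the values $w_j(0)$ and $w_j(1)$, that $\mathcal{R}_2,\mathcal{R}_3$ are joined over $(-\infty,0]$ and $\mathcal{R}_1,\mathcal{R}_2$ over $[1,\infty)$.
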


\begin{proof}
To prove \eqref{w2-w3}, we see from the definition of $\eta (z)$ in \eqref{def-eta} that for $x<0$,
\begin{equation}
\eta_{\pm} (x) = 1 - 2x \mp \sqrt{x(x-1)} \quad  \textrm{and} \quad \eta_+(x) \eta_-(x) = 1
\end{equation}
Thus, from the definition of $w_2(z)$ in \eqref{def-w2}, it follows that
\begin{equation*}
w_{2, +}(x) = \frac{1}{2} \left(\omega^{-1} \eta_+(x)^{\frac{1}{3}} + \omega \eta_+(x)^{-\frac{1}{3}}+1\right) = \frac{1}{2} \left(\omega^{-1} \eta_-(x)^{-\frac{1}{3}} + \omega \eta_-(x)^{\frac{1}{3}}+1 \right) = w_{3, -}(x).
\end{equation*}
Similarly, we can obtain $w_{2, -}(x) = w_{3, +}(x)$ for $x<0$ and \eqref{w2-w1}.

Next, we come to the asymptotics of $w_j(z), j = 2, 3$, as $z \to \infty$. From \eqref{def-eta}, it is easily seen that as $z\to \infty$,
\begin{equation}\label{infty-eta}
\eta(z) = \begin{cases}
-\frac{1}{4z} - \frac{1}{8z^2} - \frac{5}{64 z^3} +  \Boh (z^{-4}), & \quad \Im{z} > 0,\\
-4z + 2 +\frac{1}{4z} + \frac{1}{8z^2} + \frac{5}{64 z^3} +  \Boh (z^{-4}), & \quad \Im{z} < 0.
\end{cases}
\end{equation}
Substituting the above formula into \eqref{def-w2}, it is readily seen that, as $z \to \infty$,
\begin{align*}
&w_2(z) = \frac{1}{2} \left(\omega^{-1} \eta (z)^{\frac{1}{3}}+\omega \eta (z)^{-\frac{1}{3}}+1\right)\\
& = \frac{1}{2} \left(\frac{e^{- \pi \ii/3}}{2^{2/3}z^{1/3}}\left(1+\frac{1}{2z} +\frac{5}{16 z^2} +  \Boh (z^{-3})\right)^{\frac{1}{3}}+e^{\frac{\pi \ii}{3}} 2^{\frac{2}{3}} z^{\frac{1}{3}} \left(1+\frac{1}{2z} +\frac{5}{16 z^2} +  \Boh (z^{-3})\right)^{-\frac{1}{3}}+1\right)\\
& = -2^{-\frac{1}{3}} \omega^2 z^{\frac{1}{3}} + \frac{1}{2} - \frac{\omega}{2^{5/3}} z^{-\frac{1}{3}} + \frac{\omega^2}{6 \cdot 2^{1/3}} z^{-\frac{2}{3}} - \frac{\omega}{6 \cdot 2^{5/3}} z^{-\frac{4}{3}} + \Boh (z^{-\frac{5}{3}}),\quad \Im{z} > 0,
\end{align*}
and
\begin{equation*}
\begin{split}
&w_2(z) = \frac{1}{2} \left(\omega^{-1} \eta (z)^{\frac{1}{3}}+\omega \eta (z)^{-\frac{1}{3}}+1\right)\\
& = \frac{1}{2} \left(e^{-\frac{\pi \ii}{3}} 2^{\frac{2}{3}} z^{\frac{1}{3}}\left(1-\frac{1}{2z} -\frac{1}{16 z^2} +  \Boh (z^{-3})\right)^{\frac{1}{3}}+\frac{e^{\pi \ii/3}}{2^{2/3}z^{1/3}} \left(1-\frac{1}{2z} -\frac{1}{16 z^2} +  \Boh (z^{-3})\right)^{-\frac{1}{3}}+1\right)\\
& = -2^{-\frac{1}{3}} \omega z^{\frac{1}{3}} + \frac{1}{2} - \frac{\omega^2}{2^{5/3}} z^{-\frac{1}{3}} + \frac{\omega}{6 \cdot 2^{1/3}} z^{-\frac{2}{3}} - \frac{\omega^2}{6 \cdot 2^{5/3}} z^{-\frac{4}{3}} + \Boh (z^{-\frac{5}{3}}), \quad \Im{z} < 0,
\end{split}
\end{equation*}
which is \eqref{infty-w2}. The asymptotics of $w_3(z)$ in \eqref{infty-w3} can be obtained through the same fashion.

We then show the asymptotics of $w_j(z)$, $j= 2, 3$, as $z \to 0$. It follows from \eqref{def-eta} that, as $z \to 0$,
\begin{equation}
\eta (z) = 1 + 2 \ii z^{\frac{1}{2}} - 2 z - \ii z^{\frac{3}{2}} - \frac{\ii}{4} z^{\frac{5}{2}} + \Boh (z^{\frac{7}{2}}).
\end{equation}
Inserting the above formula into \eqref{def-w2} and \eqref{def-w3} gives us
\begin{equation*}
\begin{split}
&w_2(z) = \frac{1}{2} \left(\omega^{-1} \eta (z)^{\frac{1}{3}}+\omega \eta (z)^{-\frac{1}{3}}+1\right)\\
& = \frac{1}{2} \left(e^{-\frac{2 \pi \ii}{3}} \left(1 + \frac{2 \ii}{3}z^{\frac{1}{2}} - \frac{2 z}{9} +\frac{5 \ii}{81} z^{\frac{3}{2}}+ \Boh (z^{\frac{5}{2}})\right)+e^{\frac{2 \pi \ii}{3}} \left(1 - \frac{2 \ii}{3}z^{\frac{1}{2}} - \frac{2 z}{9} -\frac{5 \ii}{81} z^{\frac{3}{2}}+ \Boh (z^{\frac{5}{2}})\right)+1\right)\\
& =\frac{\sqrt{3}}{3} z^{\frac{1}{2}} + \frac{z}{9} + \frac{5 \sqrt{3}}{162} z^{\frac{3}{2}} + \Boh ( z^{\frac{5}{2}}),
\end{split}
\end{equation*}
and
\begin{equation*}
\begin{split}
&w_3(z) = \frac{1}{2} \left(\omega \eta (z)^{\frac{1}{3}}+\omega^{-1} \eta (z)^{-\frac{1}{3}}+1\right)\\
& = \frac{1}{2} \left(e^{\frac{2 \pi \ii}{3}} \left(1 + \frac{2 \ii}{3}z^{\frac{1}{2}} - \frac{2 z}{9} +\frac{5 \ii}{81} z^{\frac{3}{2}}+ \Boh (z^{\frac{5}{2}})\right)+e^{-\frac{2 \pi \ii}{3}} \left(1 - \frac{2 \ii}{3}z^{\frac{1}{2}} - \frac{2 z}{9} -\frac{5 \ii}{81} z^{\frac{3}{2}}+ \Boh (z^{\frac{5}{2}})\right)+1\right)\\
& =-\frac{\sqrt{3}}{3} z^{\frac{1}{2}} + \frac{z}{9} -\frac{5 \sqrt{3}}{162} z^{\frac{3}{2}} + \Boh ( z^{\frac{5}{2}}),
\end{split}
\end{equation*}
which is \eqref{0-w2} and \eqref{0-w3}.

Finally, we move to the asymptotics $w_j(z)$, $j =1, 2$, as $z \to 1$. We note that, as $z \to 1$,
\begin{equation}
\eta(z)=
\begin{cases}
-1 + 2 (z-1)^{\frac{1}{2}} - 2(z-1) + (z-1)^{\frac{3}{2}} +  \Boh ((z-1)^{\frac{5}{2}}), & \quad \Im{z}>0,\\
-1 - 2 (z-1)^{\frac{1}{2}} - 2(z-1) - (z-1)^{\frac{3}{2}} +  \Boh ((z-1)^{\frac{5}{2}}), & \quad \Im{z}>0.
\end{cases}
\end{equation}
Substituting the above formula into \eqref{def-w1} and \eqref{def-w2} gives us \eqref{1-w1} and \eqref{1-w2} after direct calculations.

This finishes the proof of Proposition \ref{prop:wfunction}.
\end{proof}

It is easily seen from the above proposition that the branch points of $\mathcal{R}$--$0,1,\infty$, are mapped to the points $0,1,\infty$, on the $w$-sphere. Bijection \eqref{def:w} between the Riemann surface $\mathcal{R}$ and the extended $w$-plane are illustrated in Figure \ref{figure-w}.

\begin{figure}[ht]
 \centering
  \includegraphics[scale=.6]{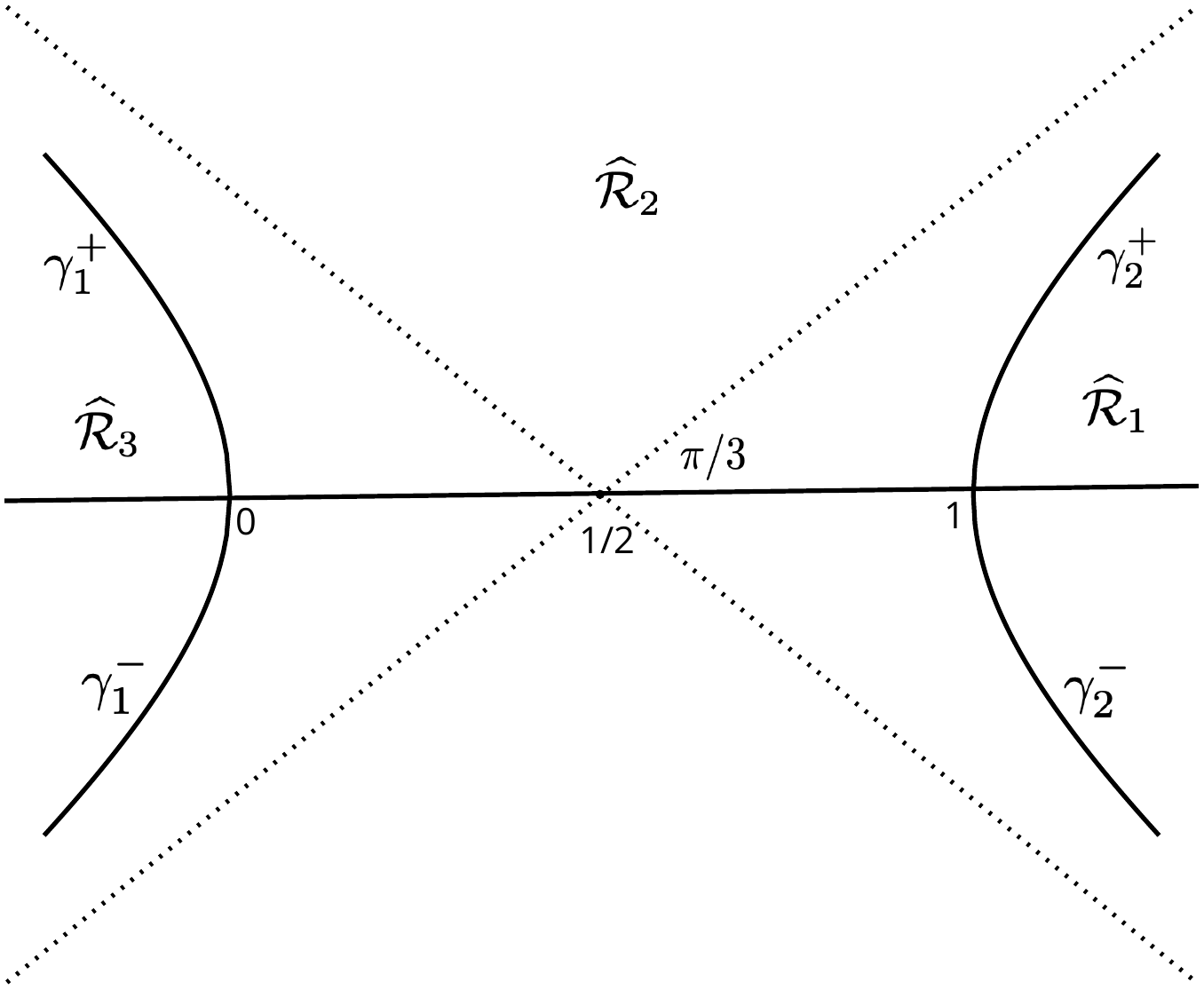}
  \caption{Image of the map $w$: $\mathcal{R} \mapsto \overline{\mathbb{C}}$. The solid lines $\gamma_i^{\pm}$, $i = 1, 2$, are the images of the cuts in the Riemann surface $\mathcal{R}$ under this map. More precisely, $\gamma_1^{\pm} = w_{2, \pm} (-\infty,0)$, $\gamma_2^{\pm} =  w_{2, \pm} (1, \infty)$ and $w(\mathcal{R}_k) = \widehat{\mathcal{R}}_k$, $k = 1, 2, 3$.}
  \label{figure-w}
\end{figure}

\subsection{The $\lambda$-functions}
With the $w$-functions given in \eqref{def-w1}--\eqref{def-w3}, the $\lambda$-functions are defined by
\begin{equation}\label{def-lambda}
	\lambda_j(z) = \frac{3}{2^{1/3}}w_j(z)^2 - \left(\frac{3}{2^{1/3}}+\frac{ 2^{1/3}}{s^{1/3}}\rho \right)w_j(z) - \frac{3}{2^{7/3}} + \frac{\rho}{2^{2/3} s^{1/3}}, \qquad j=1,2,3,
\end{equation}
which depend on the parameters $s > 0$ and $\rho \in \mathbb{R}$. In view of Proposition \ref{prop:wfunction}, the following properties of the $\lambda$-functions follow directly from \eqref{def-lambda} and straightforward calculations.

\begin{proposition}\label{pro-lambda}
The functions $\lambda_j(z)$, $j = 1,2,3$, defined in \eqref{def-lambda} have the following properties.
\begin{enumerate}
\item [\rm (i)] $\lambda_j(z)$ is analytic on $\mathcal{R}_j$ and satisfies
\begin{align}
\lambda_{2, \pm}(x) &= \lambda_{3, \mp}(x), &&  x \in (-\infty, 0), \\
\lambda_{2, \pm}(x) &= \lambda_{1, \mp}(x), &&  x \in (1, \infty).
\end{align}
Hence, the function
\begin{equation}\label{def:lambda}
\lambda: \cup_{j=1}^3 \mathcal {R}_j \to \overline{\mathbb{C}},\qquad \lambda|_{\mathcal {R}_j}=\lambda_j,
\end{equation}
extends to a meromorphic function on the Riemann surface $\mathcal R$.
\item [\rm (ii)] As $z \to \infty$ with $-\pi < \arg{z} < \pi$, we have
\begin{equation}\label{infty-lambda2}
\lambda_2 (z) = \begin{cases}
\frac{3}{2}\omega z^{\frac{2}{3}} + \frac{\rho \omega^2}{s^{1/3}} z^{\frac{1}{3}} + \omega d_1z^{-\frac{1}{3}} + \omega^2 d_2 z^{-\frac{2}{3}} + \Boh(z^{-1}), & \quad \Im{z} > 0,\\
\frac{3}{2}\omega^2 z^{\frac{2}{3}} + \frac{\rho \omega}{s^{1/3}} z^{\frac{1}{3}} + \omega^2 d_1z^{-\frac{1}{3}} + \omega d_2 z^{-\frac{2}{3}} + \Boh(z^{-1}), & \quad \Im{z} < 0,
\end{cases}
\end{equation}
and
\begin{equation}\label{infty-lambda3}
\lambda_3 (z) = \frac{3 }{2} z^{\frac{2}{3}} + \frac{\rho}{s^{1/3} }z^{\frac{1}{3}} + d_1 z^{-\frac{1}{3}} + d_2 z^{-\frac{2}{3}} + \Boh(z^{-1}), \quad z \in \mathbb{C} \setminus (-\infty, 0],
\end{equation}
where
\begin{equation}\label{d12}
d_1 = - \frac{1}{2} + \frac{\rho}{2^{4/3}s^{1/3}},\qquad d_2 = \frac{3}{2^{11/3}} - \frac{\rho}{6s^{1/3}}.
\end{equation}
\item [\rm (iii)] As $z \to 0$ with $-\pi < \arg{z} < \pi$, we have
\begin{equation}\label{0-lambda2}
\lambda_2 (z) = c_0 + c_1 z^{\frac{1}{2}} + c_2 z + c_3 z^{\frac{3}{2}} + \Boh(z^2),
\end{equation}
and
\begin{equation}\label{0-lambda3}
\lambda_3 (z) = c_0 - c_1 z^{\frac{1}{2}} + c_2 z - c_3 z^{\frac{3}{2}} + \Boh(z^2),
\end{equation}
where
\begin{equation}\label{tildec}
\begin{aligned}
c_0 &=-\frac{3}{2^{7/3}}+\frac{\rho}{2^{2/3} s^{1/3}},   && \quad  c_1 = - \frac{\sqrt{3}}{2^{1/3}} - \frac{2^{1/3} \rho}{\sqrt{3} s^{1/3}},\\
c_2 &= \frac{2^{2/3}}{3} - \frac{2^{1/3} \rho}{9 s^{1/3}},  && \quad c_3 = \frac{7 \cdot 2^{2/3}}{36 \sqrt{3}}- \frac{5\cdot 2^{1/3} \rho}{54 \sqrt{3}s^{1/3}}.
\end{aligned}
\end{equation}
\item [\rm (iv)] As $z \to 1$ with $- \pi < \arg{(z-1)} < \pi$, we have
\begin{equation}\label{1-lambda1}
\lambda_1(z) =\begin{cases}
\tilde{c}_0 - \ii \tilde{c}_1 (z-1)^{\frac{1}{2}} + \tilde{c}_2 (z-1) - \ii \tilde{c}_3 (z-1)^{\frac{3}{2}} + \Boh((z-1)^2), & \quad \Im{z}> 0,\\
\tilde{c}_0 + \ii \tilde{c}_1 (z-1)^{\frac{1}{2}} + \tilde{c}_2 (z-1) + \ii \tilde{c}_3 (z-1)^{\frac{3}{2}} + \Boh((z-1)^2), & \quad \Im{z}< 0,
\end{cases}
\end{equation}
and
\begin{equation}\label{1-lambda2}
\lambda_2(z) =\begin{cases}
\tilde{c}_0 + \ii \tilde{c}_1 (z-1)^{\frac{1}{2}} + \tilde{c}_2 (z-1) + \ii \tilde{c}_3 (z-1)^{\frac{3}{2}} + \Boh((z-1)^2), & \quad \Im{z}> 0,\\
\tilde{c}_0 - \ii \tilde{c}_1 (z-1)^{\frac{1}{2}} + \tilde{c}_2 (z-1) - \ii \tilde{c}_3 (z-1)^{\frac{3}{2}} + \Boh((z-1)^2), & \quad \Im{z}< 0,
\end{cases}
\end{equation}
where
\begin{equation}\label{def-c}
\begin{aligned}
\tilde{c}_0 & = -\frac{3}{2^{7/3}} -\frac{\rho}{2^{2/3} s^{1/3}}, && \quad  \tilde{c}_1 = \frac{\sqrt{3}}{2^{1/3}} -\frac{2^{1/3} \rho}{\sqrt{3} s^{1/3}},
\\
\tilde{c}_2 & = \frac{2^{2/3}}{3} + \frac{2^{1/3} \rho}{9 s^{1/3}}, &&\quad \tilde{c}_3 = \frac{7 \cdot 2^{2/3}}{36 \sqrt{3}} +  \frac{5\cdot 2^{1/3} \rho}{54 \sqrt{3}s^{1/3}}.
\end{aligned}
\end{equation}
\end{enumerate}
\end{proposition}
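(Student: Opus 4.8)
The plan is to derive all four items from Proposition~\ref{prop:wfunction}, exploiting that by \eqref{def-lambda} each $\lambda_j$ is the value at $w_j$ of one and the same quadratic polynomial
\[
P(w) := \frac{3}{2^{1/3}}w^2 - \left(\frac{3}{2^{1/3}}+\frac{2^{1/3}\rho}{s^{1/3}}\right)w - \frac{3}{2^{7/3}} + \frac{\rho}{2^{2/3}s^{1/3}},
\]
whose coefficients depend only on $s$ and $\rho$, not on $j$.

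For part~(i): analyticity of $\lambda_j=P(w_j)$ on $\mathcal{R}_j$ is immediate from that of $w_j$. Applying $P$ to both sides of the boundary identities $w_{2,\pm}(x)=w_{3,\mp}(x)$ on $(-\infty,0)$ and $w_{2,\pm}(x)=w_{1,\mp}(x)$ on $(1,\infty)$ of Proposition~\ref{prop:wfunction}(i) yields at once the corresponding relations for the $\lambda_j$; since $w$ is meromorphic on $\mathcal{R}$ and $P$ is a polynomial, $\lambda=P\circ w$ is meromorphic on $\mathcal{R}$.

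For parts~(ii)--(iv) the proof is purely computational: one substitutes the local expansions of $w_j$ --- \eqref{infty-w2}--\eqref{infty-w3} near $\infty$, \eqref{0-w2}--\eqref{0-w3} near $0$, \eqref{1-w1}--\eqref{1-w2} near $1$ --- into $P(\cdot)$ and collects powers. Near $\infty$ I would write $w_j(z)=-2^{-1/3}\omega^{k}z^{1/3}+\tfrac12+\Boh(z^{-1/3})$, with $\omega^{k}$ equal to $\omega^{2}$ for $j=2$ and $\Im z>0$, to $\omega$ for $j=2$ and $\Im z<0$, and to $1$ for $j=3$; then $\tfrac{3}{2^{1/3}}w_j^2$ contributes the leading term $\tfrac32\omega^{2k}z^{2/3}$, the term $-\tfrac{2^{1/3}\rho}{s^{1/3}}w_j$ contributes $\tfrac{\rho}{s^{1/3}}\omega^{k}z^{1/3}$, and the term $-\tfrac{3}{2^{1/3}}w_j$ exactly cancels the spurious $z^{1/3}$ part arising from the cross term of $w_j^2$; pushing to the next orders produces $d_1$ and $d_2$ of \eqref{d12}, and one checks that the $z^0$ contributions cancel, so the expansion indeed begins at order $z^{2/3}$. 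Near $0$ and near $1$, where $w_j$ expands in half-integer powers of $z$, resp.\ $z-1$, by Proposition~\ref{prop:wfunction}(iii)--(iv), the same substitution yields the constants $c_0,\dots,c_3$ of \eqref{tildec} and $\tilde c_0,\dots,\tilde c_3$ of \eqref{def-c}; the sign alternations between $\lambda_2$ and $\lambda_3$ (resp.\ $\lambda_1$ and $\lambda_2$) are inherited directly from those of $z^{1/2}$ (resp.\ $(z-1)^{1/2}$) in the $w$-functions.

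The only step demanding genuine care --- and thus the main, albeit mild, obstacle --- is the exact bookkeeping of the powers of $2$ and of $\omega=e^{2\pi\ii/3}$, together with verifying the various cancellations (in particular that of the $z^0$ coefficient in the expansion at $\infty$); there is no analytic subtlety, all of it being algebraic manipulation of the already-established expansions of the $w_j$.
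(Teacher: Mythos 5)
Your proposal is correct and matches the paper's (unspelled) approach: the paper simply states that the properties ``follow directly from \eqref{def-lambda} and straightforward calculations'' together with Proposition~\ref{prop:wfunction}, which is exactly the substitution you carry out. Your framing of $\lambda_j = P(w_j)$ for a single quadratic $P$ independent of $j$ is a clean way to organize it, and the cancellation you flag (the $z^{1/3}$ cross-term from $\tfrac{3}{2^{1/3}}w_j^2$ against $-\tfrac{3}{2^{1/3}}w_j$, and the vanishing of the $z^0$ coefficient) is indeed the only nontrivial bookkeeping point; I verified it and it works out to give exactly the leading $\tfrac{3}{2}\omega^{2k}z^{2/3}$, $\tfrac{\rho}{s^{1/3}}\omega^k z^{1/3}$, and $\omega^k d_1 z^{-1/3}$ terms of \eqref{infty-lambda2}--\eqref{d12}.
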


In view of items (i) and (ii) in Proposition \ref{pro-lambda} and \eqref{def-theta-k}, it is readily seen that, as $z\to \infty$,
\begin{equation}\label{eq:lambda1theta}
\lambda_1(z)=\left\{
               \begin{array}{ll}
                 s^{-\frac23}\theta_1(sz)+\Boh(z^{-\frac13}), & \hbox{$\Im z > 0$,} \\
                 s^{-\frac23}\theta_2(sz)+\Boh(z^{-\frac13}), & \hbox{$\Im z < 0$,}
               \end{array}
             \right.
\end{equation}
\begin{equation}
\lambda_2(z)=\left\{
               \begin{array}{ll}
                 s^{-\frac23}\theta_2(sz)+\Boh(z^{-\frac13}), & \hbox{$\Im z > 0$,} \\
                 s^{-\frac23}\theta_1(sz)+\Boh(z^{-\frac13}), & \hbox{$\Im z < 0$,}
               \end{array}
             \right.
\end{equation}
and
\begin{equation}\label{eq:lambda3theta}
\lambda_3(z)=s^{-\frac23}\theta_3(sz)+\Boh(z^{-\frac13}).
\end{equation}

\section{Asymptotic analysis of the RH problem for $X$}\label{sec:asyanaly}
In this section, we will  perform a Deift-Zhou steepest descent analysis \cite{Deift1999} for the RH problem for $X$. It consists of a series of explicit and invertible transformations and the final goal is to arrive at an RH problem tending to the identity matrix as $s\to \infty$.

\subsection{First transformation: $X \to T$}

The first transformation is a rescaling of the RH problem for $X$, which is defined by
\begin{equation}\label{def-T}
    T(z) = X(sz).
\end{equation}
It is then easily seen from the RH problem \ref{rhp-X} for $X$ that $T(z)$ satisfies the following RH problem.
\begin{rhp}\label{RHP:T}
\hfill
    \begin{itemize}
            \item [\rm (a)] $T (z)$ is analytic in $\mathbb{C} \setminus \Sigma_{T}$, where
           \begin{equation}\label{def:sigmaT}
    \Sigma_T:=\cup_{i=2,3,4} \Sigma_{i} \cup \{0\} \cup_{i=0,1,5} \Sigma_{i}^{(1)}\cup\{1\},
    \end{equation}
    see the solid lines in Figure \ref{figure-X} with $s=1$.
    \item [\rm (b)] For $z \in \Sigma_{T}\setminus\{0,1\}$, we have
    \begin{equation}
        T_+(z) = T_-(z) J_{T}(z),
    \end{equation}
    where
    \begin{equation}\label{jump-T}
        J_{T}(z) = \begin{cases}
        J_{\Psi}(z), &\quad z \in \cup_{i=2}^4\Sigma_i,\\
        \begin{pmatrix}
	    0 & 1 & 0\\
	    -1 & 0 & 0\\
	    0 & 0 & 1
	\end{pmatrix}, &\quad z \in \Sigma_0^{(1)},\\
	\begin{pmatrix}
	     1 & 0 & 0\\
	     1 & 1 & 0\\
	     0 & 0 & 1
	\end{pmatrix}, &\quad z \in \Sigma_1^{(1)},\\
	\begin{pmatrix}
	     1 & 0 & 0\\
	     1 & 1 & 0\\
	     0 & 0 & 1
	\end{pmatrix}, &\quad z \in \Sigma_5^{(1)},
        \end{cases}
    \end{equation}
    with $J_\Psi$ given in \eqref{jump-Psi}.

    \item [\rm (c)] As $z \to \infty$ with $z\in \mathbb{C}\setminus \Sigma_T$, we have
     \begin{multline}\label{infty-T}
        T(z)  = \frac{\ii z^{-\alpha/3}}{\sqrt{3}}\Psi_{0} \left(I + \frac{X_1}{sz} + \Boh(z^{-2})\right)
        \diag{((sz)^{\frac{1}{3}}, 1, (sz)^{-\frac{1}{3}})}
        \\
        \times L_{\pm} \diag{(e^{\pm \frac{\alpha \pi }{3}\ii}, e^{\mp \frac{\alpha \pi }{3}\ii}, 1)} e^{\Theta (sz)}, \quad \pm \Im{z} > 0,
    \end{multline}
where $\Psi_{0}$, $X_1$, $L_{\pm}$ and $\Theta (z)$ are given in \eqref{def-Psi-0}, \eqref{def-X1}, \eqref{def-L+-} and \eqref{def-theta}, respectively.
    \item [\rm (d)] As $z \to 0$, we have
    \begin{equation}\label{0-T}
        T (z) = \begin{cases}
            \Boh(z^{-\alpha}), & \quad \alpha>0,\\
            \Boh(\ln{z}), & \quad \alpha=0,\\
            \Boh(1), & \quad -1 < \alpha <0.
        \end{cases}
    \end{equation}
    \item [\rm (e)] As $z \to 1$, we have
    \begin{equation}\label{1-T}
        T (z) = \Boh(\ln{(z-1)}).
    \end{equation}
    \end{itemize}
\end{rhp}

\subsection{Second transformation: $T \to S$}
On account of \eqref{eq:lambda1theta}--\eqref{eq:lambda3theta}, we use the $\lambda$-functions to `partially' normalize the large-$z$ asymptotics of $T$ in the second transformation. It is defined by
\begin{equation}\label{def-S}
	S(z) = - \ii \sqrt{3} s^{\frac{\alpha}{3}}S_0 \diag{(s^{-\frac{1}{3}}, 1, s^{\frac{1}{3}})}\Psi_{0}^{-1} T(z) \diag{(e^{-s^{2/3}\lambda_{1}(z)},e^{-s^{2/3}\lambda_{2}(z)},e^{-s^{2/3}\lambda_{3}(z)}}),
\end{equation}
where
\begin{equation}\label{def-S-0}
S_0 = \begin{pmatrix}
1 & s^{\frac{2}{3}}d_1 & \frac{s^{4/3}}{2}d_1^2+s^{\frac{2}{3}}d_2\\
0 & 1 & s^{\frac{2}{3}}d_1\\
0 & 0 & 1
\end{pmatrix}
\end{equation}
with $d_1$ and $d_2$ being the constants given in \eqref{d12}, $\Psi_0$ and the functions $\lambda_{i}$, $i = 1, 2, 3$, are defined in \eqref{def-Psi-0} and \eqref{def-lambda}, respectively. With the aid of Proposition \ref{pro-lambda} and RH problem \ref{RHP:T} for $T$, it is straightforward to check that $S(z)$ defined in \eqref{def-S} satisfies the following RH problem.
\begin{rhp}
   \hfill
    \begin{itemize}
            \item [\rm (a)] $S (z)$ is analytic in $\mathbb{C} \setminus \Sigma_{T}$; where $\Sigma_T$ is defined in \eqref{def:sigmaT}.
    \item [\rm (b)] For $z \in \Sigma_{T}\setminus \{0,1\}$, we have
    \begin{equation}
        S_+(z) = S_-(z) J_{S}(z),
    \end{equation}
    where
    \begin{equation}\label{jump-S}
        J_{S}(z) = \begin{cases}
        \begin{pmatrix}
	    0 & 1 & 0\\
	    -1 & 0 & 0\\
	    0 & 0 & 1
	\end{pmatrix}, &\quad z \in \Sigma_0^{(1)},\\
	\begin{pmatrix}
	     1 & 0 & 0\\
	     e^{s^{2/3}(\lambda_{2}(z)-\lambda_{1}(z))} & 1 & 0\\
	     0 & 0 & 1
	\end{pmatrix}, &\quad z \in \Sigma_1^{(1)},\\
	\begin{pmatrix}
	     1 & 0 & 0\\
	     0 & 1 & e^{\alpha \pi \ii}e^{s^{2/3}(\lambda_{2}(z)-\lambda_{3}(z))}\\
	     0 & 0 & 1
	\end{pmatrix}, &\quad z \in \Sigma_2,\\
	\begin{pmatrix}
	     1 & 0 & 0\\
	     0 & 0 & -e^{-\alpha \pi \ii}\\
	     0 & e^{-\alpha \pi \ii} & 0
	\end{pmatrix}, &\quad z \in \Sigma_3,\\
	\begin{pmatrix}
	     1 & 0 & 0\\
	     0 & 1 & e^{-\alpha \pi \ii}e^{s^{2/3}(\lambda_{2}(z)-\lambda_{3}(z))}\\
	     0 & 0 & 1
	\end{pmatrix}, &\quad z \in \Sigma_4,\\
	\begin{pmatrix}
	     1 & 0 & 0\\
	     e^{s^{2/3}(\lambda_{2}(z)-\lambda_{1}(z))} & 1 & 0\\
	     0 & 0 & 1
	\end{pmatrix}, &\quad z \in \Sigma_5^{(1)}.
        \end{cases}
    \end{equation}
    \item[\rm (c)] As $z \to \infty$ with $z\in \mathbb{C}\setminus \Sigma_T$, we have
    \begin{multline}\label{infty-S}
        S (z) = z^{-\frac{\alpha}{3}} \left(I + \frac{S_1}{z}+\Boh(z^{-2})\right)\diag{(z^{\frac{1}{3}}, 1, z^{-\frac{1}{3}})}
        \\
        \times L_{\pm} \diag{(e^{\pm \frac{\alpha \pi \ii}{3}}, e^{\mp \frac{\alpha \pi \ii}{3}}, 1)}, \quad \pm \Im z>0,
    \end{multline}
    where
    \begin{equation}\label{def-S1}
	S_1 = \begin{pmatrix}
	\ast & \ast & \ast\\
	\ast & \ast & \ast\\
	-s^{\frac{2}{3}}d_1 + s^{-\frac{1}{3}} (X_1)_{31} & \ast & \ast
	\end{pmatrix}
    \end{equation}
  with $d_1$ and $X_1$ given in \eqref{d12} and \eqref{def-X1}, and $L_{\pm}$ are given in \eqref{def-L+-}.
    \item [\rm (d)] $S(z)$ has the same local behaviors as $T$ near $z = 0$ and $z = 1$; see \eqref{0-T} and \eqref{1-T}.
    \end{itemize}
\end{rhp}

A close look at the $\lambda$-functions defined in \eqref{def-lambda} gives us the following estimates.
\begin{proposition}\label{re-lambda}
Let $\varepsilon$ be any fixed, small positive number, there exist positive constants $\mathsf{c_1}$ and $\mathsf{c_2}$ such that
\begin{align}
\Re{(\lambda_2(z) - \lambda_1(z))} < - \mathsf{c_1} |z|^{\frac23}, & \qquad z \in (\Sigma_1^{(1)} \cup \Sigma_5^{(1)}) \setminus D(1, \varepsilon),\label{re-2-1}\\
\Re{(\lambda_2(z) - \lambda_3(z))} < - \mathsf{c_2} |z|^{\frac23}, & \qquad z \in (\Sigma_2 \cup \Sigma_4)\setminus D(0, \varepsilon). \label{re-2-3}
\end{align}
for $s$ large enough, where the discs $D(1, \varepsilon)$ and $D(0,\varepsilon)$ are defined in \eqref{def:dz0r}.
\end{proposition}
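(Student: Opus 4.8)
The plan is to reduce the two inequalities to concrete statements about the $w$-functions and then to exploit the explicit Cardano formulas together with the known asymptotics from Proposition \ref{prop:wfunction}. First I would record, directly from the definition \eqref{def-lambda}, that for any two indices $j,k$,
\begin{equation*}
\lambda_j(z)-\lambda_k(z) = \left(w_j(z)-w_k(z)\right)\left(\frac{3}{2^{1/3}}\bigl(w_j(z)+w_k(z)\bigr) - \frac{3}{2^{1/3}} - \frac{2^{1/3}\rho}{s^{1/3}}\right),
\end{equation*}
so that the $\rho$-dependent term is uniformly $\Boh(s^{-1/3})$ on the relevant contours (the $w$'s grow only like $|z|^{1/3}$, and on the rays $\Sigma_1^{(1)},\Sigma_5^{(1)},\Sigma_2,\Sigma_4$ we stay a fixed distance from the branch points once the discs are removed). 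Hence for $s$ large it suffices to prove the strict sign of $\Re\bigl(\tfrac{3}{2^{1/3}}(w_j-w_k)(w_j+w_k-1)\bigr)$ on each contour, with a bound proportional to $|z|^{2/3}$, and then absorb the $\Boh(s^{-1/3})$ correction into a slightly smaller constant.

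Next I would establish the leading-order behaviour of $\Re(\lambda_2-\lambda_1)$ and $\Re(\lambda_2-\lambda_3)$ both near infinity and near the finite branch points, and argue compactness in between. On $\Sigma_1^{(1)}=1+e^{\pi\ii/4}(0,\infty)$ and $\Sigma_5^{(1)}=1+e^{-\pi\ii/4}(0,\infty)$: from \eqref{infty-lambda2} and \eqref{eq:lambda1theta}--\eqref{eq:lambda3theta} (equivalently \eqref{def-theta-k}), $\lambda_2-\lambda_1 = \tfrac32(\omega^2-1)z^{2/3}+\Boh(z^{1/3})$ in the upper half-plane and $\tfrac32(\omega-1)z^{2/3}+\Boh(z^{1/3})$ in the lower half-plane, so the real part is $\tfrac32\Re((\omega^{\mp1}-1)z^{2/3})$ to leading order; on the ray $\arg z\to\pi/4$ this is $\tfrac32\cos(\tfrac16\pi - \tfrac23\pi)=\tfrac32\cos(\tfrac{\pi}{2}-\ldots)$ — the point is that $(\omega^2-1)=\sqrt3\,e^{-5\pi\ii/6}$, $\arg z^{2/3}\in(0,\pi/6]$ on that ray, so the argument of the product stays in the open left half-plane and $\Re(\lambda_2-\lambda_1)<0$ with the correct power of $|z|$. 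The same computation with conjugates handles $\Sigma_5^{(1)}$, and an analogous one using $\lambda_2-\lambda_3=\tfrac32(\omega-1)z^{2/3}+\Boh(z^{1/3})$ (resp. $\tfrac32(\omega^2-1)z^{2/3}+\Boh$) on $\Sigma_2=e^{3\pi\ii/4}(0,\infty)$ and $\Sigma_4=e^{-3\pi\ii/4}(0,\infty)$ gives \eqref{re-2-3} near infinity. Near the branch point $z=1$ on $\Sigma_1^{(1)}\cup\Sigma_5^{(1)}$ I would use \eqref{1-lambda1}--\eqref{1-lambda2}: there $\lambda_2-\lambda_1 = \mp 2\ii\tilde c_1 (z-1)^{1/2}+\Boh(z-1)$, and with $\arg(z-1)=\pm\pi/4$ on the two rays one checks $\Re(\mp 2\ii\tilde c_1(z-1)^{1/2})<0$ since $\tilde c_1>0$ for large $s$; similarly near $z=0$ on $\Sigma_2\cup\Sigma_4$ one uses \eqref{0-lambda2}--\eqref{0-lambda3}, where $\lambda_2-\lambda_3=2c_1 z^{1/2}+\Boh(z^{3/2})$ with $c_1<0$ and $\arg z=\pm 3\pi/4$, again giving a strictly negative real part. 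Finally, on each ray the function $z\mapsto \Re(\lambda_2-\lambda_j)(z)/|z|^{2/3}$ is continuous and, by the three local analyses (two endpoints plus $z=\infty$), stays bounded away from zero with a negative sign on the whole punctured ray, uniformly in $\rho$ in a compact set; a standard compactness argument on the compact piece between the removed disc and a large circle, plus the explicit bounds outside, yields uniform constants $\mathsf c_1,\mathsf c_2>0$.

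The main obstacle, I expect, is controlling the sign uniformly on the \emph{intermediate} portion of each ray — away from both the branch point and infinity — where no simple closed form for $\Re(w_j-w_k)(w_j+w_k-1)$ is available. My plan there is not to compute but to invoke the injectivity of $w$ on $\mathcal R$ (Proposition \ref{prop:wfunction}(i)): the rays $\Sigma_1^{(1)},\Sigma_5^{(1)}$ lie in sheet $\mathcal R_1$ and $\Sigma_2,\Sigma_4$ in sheet $\mathcal R_2$, so $w_j(z)\ne w_k(z)$ everywhere on them, hence $\Re(\lambda_2-\lambda_j)$ cannot vanish except possibly where the second factor $w_j+w_k-1-\tfrac{2^{1/3}\rho}{3s^{1/3}}\cdot 2^{1/3}$ is purely imaginary relative to $w_j-w_k$; one rules this out by a continuity/connectedness argument, using that the sign is already pinned down at the three ends of the ray and that the closure of the ray misses the branch cuts. (In a fully detailed write-up one would instead verify the inequality by an explicit but tedious parametrization $z=1+re^{\pm\pi\ii/4}$, $z=re^{\pm 3\pi\ii/4}$, $r>0$, and a monotonicity/positivity check; but the structure of the argument — leading term at $\infty$, leading term at the branch point, continuity with no zeros in between, then absorb the $\Boh(s^{-1/3})$ term — is what I would present.)
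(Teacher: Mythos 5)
Your reduction to the $\rho$-independent quantities (the $\rho$-term contributes a uniform $\Boh(s^{-1/3})|z|^{2/3}$ correction that can be absorbed) matches the paper exactly; the paper introduces $\lambda_j^*(z) = \frac{3}{2^{1/3}}w_j(z)^2 - \frac{3}{2^{1/3}}w_j(z) - \frac{3}{2^{7/3}}$ and reduces to showing the estimates for $\lambda_j^*$. Your endpoint analyses near $z=1$, $z=0$ and $z=\infty$ are also essentially what one needs (modulo some harmless slips in the leading coefficients, e.g.\ the factor at $\infty$ is $\tfrac32(\omega-\omega^2)=\tfrac{3\sqrt 3}{2}\ii$ rather than $\tfrac32(\omega^2-1)$, but the conclusion $\Re<0$ on the rays is unaffected).

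However, the heart of the proposal --- controlling the sign on the \emph{intermediate} part of each ray --- contains a genuine gap, and you rightly flagged it as the main obstacle. The injectivity of $w$ on $\mathcal R$ gives $w_j(z)\ne w_k(z)$, hence $\lambda_j(z)-\lambda_k(z)\ne 0$, but this says nothing about the \emph{real part}: a nowhere-zero continuous complex-valued function on a ray can perfectly well have its real part change sign between the two endpoints. Neither continuity nor connectedness, together with the sign at the three ends, rules this out; you would genuinely need the monotonicity/positivity check you alluded to, and the proposal does not supply it.

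What the paper does instead is short and decisive: using $w_j$ in terms of $\eta(z)$ (equations \eqref{def-w1}--\eqref{def-w3}), it writes
\begin{equation*}
\lambda_2^*(z)-\lambda_1^*(z)=\frac{3}{4\cdot 2^{1/3}}\Bigl((\omega-1)\,\eta(z)^{2/3}+(\omega^{-1}-1)\,\eta(z)^{-2/3}\Bigr),
\end{equation*}
and then, writing $\eta(z)=re^{\ii\theta}$ with $\theta\in(0,\pi)$ on the cut plane, obtains the closed form
\begin{equation*}
\Re\bigl(\lambda_2^*(z)-\lambda_1^*(z)\bigr)=-\frac{3}{2^{4/3}}\bigl(r^{2/3}+r^{-2/3}\bigr)\sin\!\frac{\pi+2\theta}{3}\,\sin\frac{\pi}{3},
\end{equation*}
which is manifestly negative and bounded away from zero once $\theta$ stays in a compact subset of $(0,\pi)$ (as it does for bounded $z$ on $\Sigma_1^{(1)}\cup\Sigma_5^{(1)}\setminus D(1,\varepsilon)$); the large-$z$ regime is handled by the asymptotics of $\lambda_j^*$. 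This explicit $\eta$-representation is precisely the missing ingredient in your sketch, replacing the inconclusive continuity argument with an exact identity for the real part.
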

\begin{proof}
Let
\begin{equation}\label{def-lambda*}
	\lambda_j^*(z): = \frac{3}{2^{1/3}}w_j(z)^2 - \frac{3}{2^{1/3}}w_j(z) - \frac{3}{2^{7/3}} , \qquad j=1,2,3.
\end{equation}
In view of Proposition \ref{prop:wfunction} and \eqref{def-lambda}, it is readily seen that
\begin{equation*}
|\lambda_j(z)-\lambda_j^*(z)|\leq \frac{\varrho}{s^{1/3}}|z|^{\frac23}, \qquad z\in \mathbb{C}\setminus D(0,\varepsilon),
\end{equation*}
for some positive $\varrho$. Thus, by the triangle inequality, it suffices to show \eqref{re-2-1} and \eqref{re-2-3} hold for $\lambda_j^*$.

We see from \eqref{def-w1} and \eqref{def-w2} that
\begin{align*}
\lambda_2^*(z) - \lambda_1^*(z)
&= \frac{3}{2^{1/3}} (w_2(z)^2 - w_2(z) -w_1(z)^2 + w_1(z))
\\
&=\frac{3}{4\cdot 2^{1/3}} \left(e^{\frac{2\pi \ii}{3}} \eta(z)^{\frac{2}{3}} + e^{-\frac{2\pi \ii}{3}} \eta(z)^{-\frac{2}{3}} - \eta(z)^{\frac{2}{3}} - \eta(z)^{-\frac{2}{3}}\right).
\end{align*}
For bounded $z\in (\Sigma_1^{(1)} \cup \Sigma_5^{(1)}) \setminus D(1, \varepsilon)$, by writing
$\eta (z) = r e^{\ii \theta}$, where  $r > 0$ and $\theta$ belongs to a compact subset of $(0,\pi)$, it follows that
\begin{align*}
\Re{(\lambda_2^*(z) - \lambda_1^*(z))} & = \frac{3}{2^{7/3}} (r^{\frac{2}{3}}  + r^{-\frac{2}{3}})\left(\cos\left(\frac{2(\pi + \theta)}{3}\right) - \cos\left(\frac{2 \theta}{3}\right)\right)\\
 & =  - \frac{3}{2^{4/3}} (r^{\frac{2}{3}}  + r^{-\frac{2}{3}})\sin\left(\frac{\pi + 2\theta}{3}\right) \sin\left(\frac{\pi}{3}\right)
 <  -\mathsf{c_1} |z|^{\frac 23},
\end{align*}
where $\mathsf{c_1}>0$ is independent of $z$. For large $z\in (\Sigma_1^{(1)} \cup \Sigma_5^{(1)}) \setminus D(1, \varepsilon)$, the above estimate follows from asymptotics of $\lambda_j^*$, which can be readily obtained from item (ii) of Proposition \ref{pro-lambda}.

The proof of \eqref{re-2-3} is similar, we omit the details here. This finishes the proof of Proposition \ref{re-lambda}.
\end{proof}
%
%
The following corollary is an immediate consequence of Proposition \ref{re-lambda}.
\begin{corollary}\label{cor}
For $s$ large enough, there exists a constant $c>0$ such that
\begin{equation}
J_S(z)=I+\Boh(e^{-cs^{2/3}|z|^{2/3}}),
\end{equation}
uniformly for $z\in (\Sigma_1^{(1)} \cup \Sigma_5^{(1)}\cup \Sigma_2 \cup \Sigma_4)\setminus (D(0,\varepsilon)\cup D(1,\varepsilon))$.
\end{corollary}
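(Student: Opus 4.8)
The plan is to unwind the definition \eqref{jump-S} of $J_S$ on each of the four relevant rays and to bound the single nontrivial entry of $J_S(z)-I$ by means of Proposition \ref{re-lambda}. First I would note that on $\Sigma_1^{(1)}$ and $\Sigma_5^{(1)}$ the matrix $J_S(z)$ coincides with $I$ except in the $(2,1)$ position, where it equals $e^{s^{2/3}(\lambda_2(z)-\lambda_1(z))}$; since $s^{2/3}>0$, the modulus of this entry is $e^{s^{2/3}\Re(\lambda_2(z)-\lambda_1(z))}$, and by \eqref{re-2-1} this is at most $e^{-\mathsf{c_1}s^{2/3}|z|^{2/3}}$ provided $z\in(\Sigma_1^{(1)}\cup\Sigma_5^{(1)})\setminus D(1,\varepsilon)$ and $s$ is large enough.

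Next I would handle $\Sigma_2$ and $\Sigma_4$ in the same way: there $J_S(z)$ differs from $I$ only in the $(2,3)$ entry $e^{\pm\alpha\pi\ii}e^{s^{2/3}(\lambda_2(z)-\lambda_3(z))}$. Since $\alpha$ is a real parameter we have $|e^{\pm\alpha\pi\ii}|=1$, so this entry has modulus $e^{s^{2/3}\Re(\lambda_2(z)-\lambda_3(z))}$, which by \eqref{re-2-3} is bounded by $e^{-\mathsf{c_2}s^{2/3}|z|^{2/3}}$ for $z\in(\Sigma_2\cup\Sigma_4)\setminus D(0,\varepsilon)$ and $s$ large. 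Setting $c:=\min\{\mathsf{c_1},\mathsf{c_2}\}>0$ and combining the two estimates then gives $J_S(z)=I+\Boh(e^{-cs^{2/3}|z|^{2/3}})$ uniformly on $(\Sigma_1^{(1)}\cup\Sigma_5^{(1)}\cup\Sigma_2\cup\Sigma_4)\setminus(D(0,\varepsilon)\cup D(1,\varepsilon))$, which is precisely the assertion.

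There is essentially no obstacle: the corollary is a one-line consequence of Proposition \ref{re-lambda} together with the elementary identity $|e^{tw}|=e^{t\Re w}$ for $t>0$. The only two points worth flagging are that the prefactors $e^{\pm\alpha\pi\ii}$ appearing on $\Sigma_2$ and $\Sigma_4$ are unimodular because $\alpha\in\mathbb{R}$, and that the uniformity in $z$ together with the threshold ``$s$ large enough'' is already built into the statement of Proposition \ref{re-lambda}, so no further analytic input is required.
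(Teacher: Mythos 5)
Your proof is correct and follows exactly the path the paper intends: the paper merely notes that the corollary is ``an immediate consequence of Proposition \ref{re-lambda},'' and your write-up supplies the routine details — identifying the single off-diagonal entry on each ray, taking moduli using $|e^{tw}|=e^{t\Re w}$ and $|e^{\pm\alpha\pi\ii}|=1$, applying \eqref{re-2-1}--\eqref{re-2-3}, and setting $c=\min\{\mathsf{c_1},\mathsf{c_2}\}$.
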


\subsection{Global parametrix}\label{sec:RHP:global}
By Corollary \ref{cor}, we could ignore the jump of $S$ for $z$ bounded away from the intervals $(-\infty, 0) \cup (1, +\infty)$ and large $s$, which leads to the following global parametrix.
\begin{rhp}\label{rhp-N}
\hfill
\begin{itemize}
	\item [\rm (a)] $N_\alpha(z)$ is analytic for $z \in \mathbb{C} \setminus ((-\infty, 0]\cup [1, +\infty))$.
	\item [\rm (b)] For $x \in (-\infty, 0]\cup [1, +\infty)$, we have
	\begin{equation}\label{jump-N}
		N_{\alpha,+}(x) = N_{\alpha,-}(x) J_{N_\alpha}(x),
	\end{equation}
	where
	\begin{equation}\label{def:JNalpha}
		J_{N_\alpha}(x) = \begin{cases}
		\begin{pmatrix}
			1 & 0 & 0\\
			0 & 0 & -e^{-\alpha \pi \ii}\\
			0 & e^{-\alpha \pi \ii} & 0
		\end{pmatrix}, & \quad x \in (-\infty, 0),\\
		\begin{pmatrix}
			0 & 1 & 0\\
			-1 & 0 & 0\\
			0 & 0 & 1
		\end{pmatrix}, & \quad x \in (1, +\infty).
		\end{cases}
	\end{equation}
	\item [\rm (c)] As $z \to \infty$ and $\pm \Im{z} > 0$, we have
	\begin{equation}\label{infty-N}
		N_\alpha(z) = z^{-\frac{\alpha}{3}} \left(I +\Boh(z^{-1})\right)\diag{(z^{\frac{1}{3}}, 1, z^{-\frac{1}{3}})} L_{\pm} \diag{(e^{\pm \frac{\alpha \pi \ii}{3}}, e^{\mp \frac{\alpha \pi \ii}{3}}, 1)},
	\end{equation}
where the constant matrices $L_\pm$ are given in \eqref{def-L+-}.
\end{itemize}
\end{rhp}
The RH problem for $N_\alpha$ can be solved explicitly in two steps. As the first step, we construct a solution for the special case $\alpha=0$.
\begin{lemma}\label{prop:N0}
Let $w_i$, $i=1,2,3$, be three solutions of the algebraic equation \eqref{algebraic equation} given in \eqref{def-w1}--\eqref{def-w3}.
A solution of the RH problem \ref{rhp-N} with $\alpha=0$ is given by
\begin{equation}\label{def-tildeN}
	N_0(z) = \frac{1}{9}\begin{pmatrix}
		-5 \cdot 2^{-\frac 23} & -7 \cdot 2^{\frac 13} & 2^{-\frac 23}\\
		4 & -2 & -2\\
		-2^{\frac 53} & 2^{\frac 83} & -2^{\frac 53}
	\end{pmatrix}
	\begin{pmatrix}
		\mathcal{N}_1(w_1(z)) & \mathcal{N}_1(w_2(z)) & \mathcal{N}_1(w_3(z))\\
		\mathcal{N}_2(w_1(z)) & \mathcal{N}_2(w_2(z)) & \mathcal{N}_2(w_3(z))\\
		\mathcal{N}_3(w_1(z)) & \mathcal{N}_3(w_2(z)) &\mathcal{N}_3(w_3(z))
	\end{pmatrix},
\end{equation}
where
\begin{equation}\label{def-N123}
	\mathcal{N}_1(w) = \frac{w^2}{\sqrt{w(w-1)}}, \quad \mathcal{N}_2(w) = \frac{w(w-3/2)}{\sqrt{w(w-1)}}, \quad
    \mathcal{N}_3(w) = \frac{(w-3/2)^2}{\sqrt{w(w-1)}}.
\end{equation}
Here, the branch cut for the square root is taken along $\gamma_1^- \cup \gamma_2^-$, i.e., the curve defined by $w_{2,-}((-\infty, 0] \cup [1, +\infty))$; see Figure \ref{figure-w} for an illustration.
\end{lemma}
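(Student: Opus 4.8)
The plan is to check directly that the matrix $N_0$ in \eqref{def-tildeN} meets items (a), (b), (c) of RH problem~\ref{rhp-N} with $\alpha=0$; since that RH problem imposes no condition near $z=0$ or $z=1$, these three items are all one must verify. It is convenient to write $N_0=C\,\mathcal M$, where $C$ is the constant matrix appearing in \eqref{def-tildeN} and $\mathcal M(z)=\bigl(\mathcal N_i(w_j(z))\bigr)_{i,j=1}^{3}$. Left multiplication by the constant $C$ alters neither analyticity nor the jumps, so items (a) and (b) reduce to statements about $\mathcal M$; the sole purpose of $C$ --- together with the particular functions $\mathcal N_i$ in \eqref{def-N123} and the prescribed branch cut $\gamma_1^-\cup\gamma_2^-$ of the square root --- is to produce the asymptotic normalization at $\infty$ required in item (c).

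For analyticity and the jumps, the key observation is that $\gamma_1^-\cup\gamma_2^-$ joins the two zeros $w=0,1$ of $w(w-1)$ (through the point at infinity), so each $\mathcal N_i$ is single-valued and analytic on $\overline{\mathbb C}\setminus(\gamma_1^-\cup\gamma_2^-)$, with only algebraic singularities at $w=0,1$. By Proposition~\ref{prop:wfunction}(i), $w_j$ is analytic on $\mathcal R_j$ and maps its interior biholomorphically onto $\widehat{\mathcal R}_j$; since each $\widehat{\mathcal R}_j$ meets $\gamma_1^-\cup\gamma_2^-$ only along its boundary (see Figure~\ref{figure-w}), and since $w_j(z)\in\{0,1\}$ forces $z\in\{0,1\}$, each entry $\mathcal N_i(w_j(z))$ is analytic on $\mathcal R_j\setminus\{0,1\}$. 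Because $\mathbb C\setminus((-\infty,0]\cup[1,+\infty))=\mathcal R_1\cap\mathcal R_2\cap\mathcal R_3$, item (a) follows, and one checks in the same way that there is no jump across $(0,1)$. For the jump on $(-\infty,0)$: there $w_1$ is analytic, $w_{2,\pm}(x)=w_{3,\mp}(x)$ by Proposition~\ref{prop:wfunction}(i), and crossing $(-\infty,0)$ forces $w_2$ (respectively $w_3$) to traverse the arc $\gamma_1^-$, across which $\sqrt{w(w-1)}$ --- hence every $\mathcal N_i$ --- changes sign, while $w_1$ stays off $\gamma_1^-\cup\gamma_2^-$. Assembling these three facts, the first column of $\mathcal M$ is unchanged, its second column is replaced by the old third column, and its third column by the negative of the old second; that is, $\mathcal M_+(x)=\mathcal M_-(x)J_{N_0}(x)$ with $J_{N_0}$ as in \eqref{def:JNalpha} at $\alpha=0$. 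The same reasoning on $(1,+\infty)$, now using $w_{2,\pm}(x)=w_{1,\mp}(x)$ and the sign change across $\gamma_2^-$, reproduces $J_{N_0}$ there. Since $N_0=C\mathcal M$, the relation $N_{0,+}=N_{0,-}J_{N_0}$ holds, which is item (b).

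Item (c) is the technical heart. Using Proposition~\ref{prop:wfunction}(ii) together with $w_1=\tfrac32-w_2-w_3$ one obtains, for $\pm\Im z>0$ and to order $z^{-1}$, expansions $w_j(z)=-2^{-1/3}\omega^{m_j}z^{1/3}+\tfrac12+\Boh(z^{-1/3})$, where $(m_1,m_2,m_3)=(1,2,0)$ for $\Im z>0$ and the exponents of $\omega$ are permuted for $\Im z<0$; expanding \eqref{def-N123} directly gives $\mathcal N_i(w)=w+c_i+\Boh(w^{-1})$ with $(c_1,c_2,c_3)=(\tfrac12,-1,-\tfrac52)$, hence $\mathcal M_{ij}(z)=-2^{-1/3}\omega^{m_j}z^{1/3}+(\tfrac12+c_i)+\Boh(z^{-1/3})$ (up to the branch of the square root, which contributes only an overall column sign). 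Multiplying by $C$, the $z^{1/3}$-coefficient of row $k$ of $N_0$ is proportional to $\sum_i C_{ki}$ and its $z^0$-coefficient to $\sum_i C_{ki}(\tfrac12+c_i)$; the entries of $C$ are arranged precisely so that $\sum_i C_{2i}=0$ and $\sum_i C_{3i}=\sum_i C_{3i}(\tfrac12+c_i)=0$, which lowers rows $2$ and $3$ of $N_0$ to orders $\Boh(1)$ and $\Boh(z^{-1/3})$, matching the column weights in $\diag(z^{1/3},1,z^{-1/3})$. Carrying the $z^{-1/3}$-expansion one further order and comparing with $\diag(z^{1/3},1,z^{-1/3})L_\pm$ identifies the leading matrix as $L_\pm$ and shows the remaining left factor to be $I+\Boh(z^{-1})$, which is \eqref{infty-N} at $\alpha=0$. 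This last, essentially computational, step --- keeping enough terms of the $z^{-1/3}$-series of the $w_j$ and correctly tracking the branches of the square root near $\infty$ --- is where the bulk of the effort lies, and I expect it to be the main obstacle. As a by-product, pulling $\sqrt{w(w-1)}$ out of each column of $\mathcal M$ and recognizing a Vandermonde determinant in $w_1,w_2,w_3$ shows that $\det N_0$ is a nonzero constant (the factors $\sqrt{z(z-1)}$ cancel), so $N_0$ is in fact globally invertible, as will be needed in the sequel.
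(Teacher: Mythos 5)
Your proposal follows the same route as the paper: verify the jump condition on $(-\infty,0)$ and $(1,+\infty)$ via Proposition~\ref{prop:wfunction}(i) and the sign change of $\sqrt{w(w-1)}$ across $\gamma_1^-\cup\gamma_2^-$, then obtain \eqref{infty-N} at $\alpha=0$ by substituting the $z\to\infty$ expansions of the $w_j$ into the $\mathcal N_i$ and multiplying on the left by the constant matrix. The paper carries out the large-$z$ computation explicitly (to order $z^{-5/3}$, partly because those coefficients are reused in the proof of Lemma~\ref{pro-N}), whereas you only state the outcome; what you add is the structural observation that the cancellations producing the correct column weights in $\diag(z^{1/3},1,z^{-1/3})L_{\pm}$ come from the linear identities $\sum_iC_{2i}=\sum_iC_{3i}=\sum_iC_{3i}(\tfrac12+c_i)=0$, which is a genuinely clarifying reformulation of the paper's brute-force expansion (and your reading of the $c_i$ and of these identities is correct). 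You are also right to flag that the branch of $\sqrt{w(w-1)}$ near $\infty$ could in principle contribute a per-column sign, since $\infty$ lies on the cut $\gamma_1^-\cup\gamma_2^-$; the paper disposes of this implicitly through its explicit expansions (the signs all turn out to be $+1$), and your proof would need to nail this down before concluding. The Vandermonde/determinant remark and the check of analyticity across $(0,1)$ are correct byproducts not made explicit in the paper.
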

\begin{proof}
By item (i) of Proposition \ref{prop:wfunction} and the definition of $\mathcal N_j(w)$, $j = 1, 2, 3$, in \eqref{def-N123}, it is easily seen that if $x<0$
\begin{align*}
&\mathcal N_{j, +}(w_{1}(x)) = \mathcal N_{j}(w_{1, +}(x))=\mathcal N_j(w_{1, -}(x))=\mathcal N_{j,-}(w_{1}(x)),
\\
& \mathcal N_{j,+}(w_{2}(x)) = \mathcal N_j(w_{2, +}(x)) = \mathcal N_j(w_{3, -}(x))= \mathcal N_{j,-}(w_{3}(x)),
\\
& \mathcal N_{j,-}(w_{2}(x))=\mathcal N_j(w_{2, -}(x)) = - \mathcal N_j(w_{3, +}(x))= - \mathcal N_{j,+}(w_{3}(x)),
\end{align*}
and if $x>1$,
\begin{align*}\label{relation-N123}
& \mathcal N_{j,+}(w_{2}(x))=\mathcal N_j(w_{2, +}(x)) = \mathcal N_j(w_{1, -}(x))= \mathcal N_{j,-}(w_{1}(x)),
\\
& \mathcal N_{j,-}(w_{2}(x))=\mathcal N_j(w_{2, -}(x)) = -\mathcal N_j(w_{1, +}(x))= -\mathcal N_{j,+}(w_{1}(x)),
\\
& \mathcal N_{j,+}(w_{3}(x)) =\mathcal N_j(w_{3, +}(x)) = \mathcal N_j(w_{3, -}(x))= \mathcal N_{j,-}(w_{3}(x)),
\end{align*}
These are exactly the jump condition \eqref{jump-N} and \eqref{def:JNalpha} with $\alpha=0$.

To show the asymptotic condition \eqref{infty-N} with $\alpha=0$, we obtain from items (i) and (ii) of Proposition \ref{prop:wfunction}, \eqref{def-N123} and straightforward calculations that, as $z\to \infty$,
\begin{align*}
& \mathcal N_{1}(w_{1}(z))= \begin{cases}
-2^{-\frac 13} \omega z^{\frac 13} + 1 -\frac{5 \cdot 2^{1/3} \omega^2}{8}z^{-\frac 13} + \frac{5 \cdot 2^{2/3} \omega}{24} z^{-\frac 23}
\\
 \quad+\frac{5}{64} z^{-1} - \frac{35 \cdot 2^{1/3} \omega^2}{192} z^{-\frac 43} + \Boh (z^{-\frac 53}), &\qquad \Im{z}>0,\\
-2^{-\frac 13} \omega^2 z^{\frac 13} + 1 -\frac{5 \cdot 2^{1/3} \omega}{8}z^{-\frac 13} + \frac{5 \cdot 2^{2/3} \omega^2}{24} z^{-\frac 23}
\\
\quad+\frac{5}{64} z^{-1} - \frac{35 \cdot 2^{1/3} \omega}{192} z^{-\frac 43} + \Boh (z^{-\frac 53}), &\qquad \Im{z}<0,
\end{cases}\\
& \mathcal N_{1}(w_{2}(z))= \begin{cases}
-2^{-\frac 13} \omega^2 z^{\frac 13} + 1 -\frac{5 \cdot 2^{1/3} \omega}{8}z^{-\frac 13} + \frac{5 \cdot 2^{2/3} \omega^2}{24} z^{-\frac 23}
\\
\quad+\frac{5}{64} z^{-1} - \frac{35 \cdot 2^{1/3} \omega}{192} z^{-\frac 43} + \Boh (z^{-\frac 53}), &\qquad \Im{z}>0,\\
-2^{-\frac 13} \omega z^{\frac 13} + 1 -\frac{5 \cdot 2^{1/3} \omega^2}{8}z^{-\frac 13} + \frac{5 \cdot 2^{2/3} \omega}{24} z^{-\frac 23}
\\
 \quad+\frac{5}{64} z^{-1} - \frac{35 \cdot 2^{1/3} \omega^2}{192} z^{-\frac 43} + \Boh (z^{-\frac 53}), &\qquad \Im{z}<0,
\end{cases}\\
& \mathcal N_{1}(w_{3}(z))= -2^{-\frac 13} z^{\frac 13} + 1 -\frac{5 \cdot 2^{1/3}}{8}z^{-\frac 13} + \frac{5 \cdot 2^{2/3}}{24} z^{-\frac 23} +\frac{5}{64} z^{-1} - \frac{35 \cdot 2^{1/3}}{192} z^{-\frac 43} + \Boh (z^{-\frac 53}),\\
& \mathcal N_{2}(w_{1}(z))= \begin{cases}
-2^{-\frac 13} \omega z^{\frac 13} -\frac 12 +\frac{2^{1/3} \omega^2}{8}z^{-\frac 13} + \frac{2^{2/3} \omega}{48} z^{-\frac 23}
\\
\quad-\frac{7}{64} z^{-1} + \frac{23 \cdot 2^{1/3} \omega^2}{384} z^{-\frac 43} + \Boh (z^{-\frac 53}), &\qquad \Im{z}>0,\\
-2^{-\frac 13} \omega^2 z^{\frac 13} -\frac 12 +\frac{2^{1/3} \omega}{8}z^{-\frac 13} + \frac{5 \cdot 2^{2/3} \omega^2}{48} z^{-\frac 23}
\\
\quad-\frac{7}{64} z^{-1} + \frac{23 \cdot 2^{1/3} \omega}{384} z^{-\frac 43} + \Boh (z^{-\frac 53}), & \qquad \Im{z}<0,
\end{cases}\\
& \mathcal N_{2}(w_{2}(z))= \begin{cases}
-2^{-\frac 13} \omega^2 z^{\frac 13} -\frac 12 +\frac{2^{1/3} \omega}{8}z^{-\frac 13} + \frac{5 \cdot 2^{2/3} \omega^2}{48} z^{-\frac 23}
\\
\quad-\frac{7}{64} z^{-1} + \frac{23 \cdot 2^{1/3} \omega}{384} z^{-\frac 43} + \Boh (z^{-\frac 53}), &\qquad \Im{z}>0,\\
-2^{-\frac 13} \omega z^{\frac 13} -\frac 12 +\frac{2^{1/3} \omega^2}{8}z^{-\frac 13} + \frac{2^{2/3} \omega}{48} z^{-\frac 23}
\\
\quad-\frac{7}{64} z^{-1} + \frac{23 \cdot 2^{1/3} \omega^2}{384} z^{-\frac 43} + \Boh (z^{-\frac 53}), & \qquad \Im{z}<0,
\end{cases}\\
& \mathcal N_{2}(w_{3}(z))= -2^{-\frac 13}  z^{\frac 13} -\frac 12 +\frac{2^{1/3}}{8}z^{-\frac 13} + \frac{5 \cdot 2^{2/3}}{48} z^{-\frac 23} -\frac{7}{64} z^{-1} + \frac{23 \cdot 2^{1/3}}{384} z^{-\frac 43} + \Boh (z^{-\frac 53}),
\end{align*}
and
\begin{align*}
& \mathcal N_{3}(w_{1}(z))= \begin{cases}
-2^{-\frac 13} \omega z^{\frac 13} -2 -\frac{11 \cdot 2^{1/3} \omega^2}{8}z^{-\frac 13} + \frac{2^{2/3} \omega}{6} z^{-\frac 23}
\\
\quad+\frac{17}{64} z^{-1} - \frac{7 \cdot 2^{1/3} \omega^2}{96} z^{-\frac 43} + \Boh (z^{-\frac 53}), &\qquad \Im{z}>0,\\
-2^{-\frac 13} \omega^2 z^{\frac 13} -2 -\frac{11 \cdot 2^{1/3} \omega}{8}z^{-\frac 13} + \frac{2^{2/3} \omega^2}{6} z^{-\frac 23}
\\
\quad+\frac{17}{64} z^{-1} - \frac{7 \cdot 2^{1/3} \omega}{96} z^{-\frac 43} + \Boh (z^{-\frac 53}), &\qquad \Im{z}<0,
\end{cases}\\
& \mathcal N_{3}(w_{2}(z))= \begin{cases}
-2^{-\frac 13} \omega^2 z^{\frac 13} -2 -\frac{11 \cdot 2^{1/3} \omega}{8}z^{-\frac 13} + \frac{2^{2/3} \omega^2}{6} z^{-\frac 23}
\\
\quad+\frac{17}{64} z^{-1} - \frac{7 \cdot 2^{1/3} \omega}{96} z^{-\frac 43} + \Boh (z^{-\frac 53}), &\qquad \Im{z}>0,\\
-2^{-\frac 13} \omega z^{\frac 13} -2 -\frac{11 \cdot 2^{1/3} \omega^2}{8}z^{-\frac 13} + \frac{2^{2/3} \omega}{6} z^{-\frac 23}
\\
\quad+\frac{17}{64} z^{-1} - \frac{7 \cdot 2^{1/3} \omega^2}{96} z^{-\frac 43} + \Boh (z^{-\frac 53}), &\qquad \Im{z}<0,
\end{cases}\\
& \mathcal N_{3}(w_{3}(z))= -2^{-\frac 13} z^{\frac 13} -2 -\frac{11 \cdot 2^{1/3}}{8}z^{-\frac 13} + \frac{2^{2/3}}{6} z^{-\frac 23} +\frac{17}{64} z^{-1} - \frac{7 \cdot 2^{1/3}}{96} z^{-\frac 43} + \Boh (z^{-\frac 53}).
\end{align*}
Inserting the above formulas into \eqref{def-tildeN}, we have
\begin{equation}\label{eq:N0asy}
		N_0(z) = \left(I +\Boh(z^{-1})\right)\diag{(z^{\frac{1}{3}}, 1, z^{-\frac{1}{3}})} L_{\pm}, \qquad \pm \Im{z} > 0,
\end{equation}
as required.

This completes the proof of Lemma \ref{prop:N0}.
\end{proof}


For general $\alpha \neq 0$, we define, with the aid of $N_0$ in \eqref{def-tildeN},
\begin{equation}\label{def-N}
	N_{\alpha}(z) = C_{\alpha} N_0(z) \diag(D_1(z),D_2(z),D_3(z)),
\end{equation}
where
\begin{equation}\label{def:C-alpha}
C_{\alpha} = 2^{-\frac{\alpha}{3}} \begin{pmatrix}
1 & - 2^{-\frac 23}\alpha &  2^{-\frac 73}\alpha (\alpha +1)\\
0 & 1 & -2^{-\frac 23}\alpha\\
0 & 0 & 1
\end{pmatrix}
\end{equation}
and
\begin{equation}\label{def:di}
D_1(z)=w_1(z)^{-\alpha},\qquad D_2(z)=w_2(z)^{-\alpha}, \qquad D_3(z)=e^{\alpha \pi \ii}w_3(z)^{-\alpha},
\end{equation}
and the branch cut for $z^\alpha$ is taken along $\gamma_1^{-} = w_{2, -} (-\infty,0)$. The functions $D_i(z)$, $i=1,2,3$, can be viewed as an analogue to the Szeg\H{o} function; cf. \cite{Kuijlaars2004}.
\begin{lemma}\label{pro-N}
The function $N_\alpha(z)$ defined in \eqref{def-N} solves the RH problem \ref{rhp-N}.
\end{lemma}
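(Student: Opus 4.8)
The plan is to verify, one by one, the three defining conditions (a)--(c) of RH problem~\ref{rhp-N} for the matrix $N_\alpha = C_\alpha N_0 \mathcal{D}$, where $\mathcal{D}(z) := \diag(D_1(z),D_2(z),D_3(z))$; throughout we use that $N_0$ already solves the case $\alpha = 0$ (Lemma~\ref{prop:N0}) and that $C_\alpha$ is a constant matrix. For the analyticity in item (a), recall from Proposition~\ref{prop:wfunction}(i) and Figure~\ref{figure-w} that $w_j$ is analytic on $\mathcal{R}_j$, that its image $\widehat{\mathcal{R}}_j = w_j(\mathcal{R}_j)$ meets the curve $\gamma_1^- = w_{2,-}(-\infty,0)$ at most along its boundary, and that $w=0$ is not an interior point of $\widehat{\mathcal{R}}_j$. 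Since $\gamma_1^-$ runs from $0$ to $\infty$ and is exactly the branch cut chosen for $(\cdot)^{-\alpha}$ in \eqref{def:di}, it follows that each $D_j$ is analytic on $\mathcal{R}_j \supseteq \mathbb{C}\setminus((-\infty,0]\cup[1,+\infty))$; as $N_0$ is analytic off $(-\infty,0]\cup[1,+\infty)$ and $C_\alpha$ is constant, $N_\alpha$ is analytic on $\mathbb{C}\setminus((-\infty,0]\cup[1,+\infty))$ as well.

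For the jump condition in item (b), constancy of $C_\alpha$ gives $J_{N_\alpha} = \mathcal{D}_-^{-1} J_{N_0} \mathcal{D}_+$, so it suffices to identify this product. On $(1,+\infty)$, Proposition~\ref{prop:wfunction}(i) yields $w_{1,\pm} = w_{2,\mp}$ while $w_3$ is analytic across the interval, and since the relevant $w$-values avoid $\gamma_1^-$ we get $D_{1,\pm} = D_{2,\mp}$ and $D_3$ continuous; a one-line matrix computation then shows $\mathcal{D}_-^{-1}J_{N_0}\mathcal{D}_+ = J_{N_0}$ there, which coincides with $J_{N_\alpha}$. On $(-\infty,0)$ we have $w_{2,\pm} = w_{3,\mp}$, and here one must also take into account the jump of $(\cdot)^{-\alpha}$ across its cut $\gamma_1^-$; combining this with the prefactor $e^{\alpha\pi\ii}$ built into $D_3$, the analogous computation produces exactly the entries $\mp e^{-\alpha\pi\ii}$ in $J_{N_\alpha}|_{(-\infty,0)}$, the $(1,1)$-entry being $1$ because $w_1$, hence $D_1$, is analytic across $(-\infty,0)$. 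The prefactor $e^{\alpha\pi\ii}$ in \eqref{def:di} is precisely what is needed for this bookkeeping to close.

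For the behaviour at infinity in item (c), Proposition~\ref{prop:wfunction}(ii) gives $w_j(z) = \kappa_j^{\pm}\,z^{1/3}\bigl(1+\Boh(z^{-1/3})\bigr)$ as $z\to\infty$ with $\pm\Im z>0$, where $\kappa_j^{\pm}$ equals $-2^{-1/3}$ times an explicit power of $\omega$; hence $D_j(z) = z^{-\alpha/3}\bigl((\kappa_j^{\pm})^{-\alpha}+\Boh(z^{-1/3})\bigr)$ and $\mathcal{D}(z) = z^{-\alpha/3}\bigl(\mathcal{D}_0^{\pm}+\Boh(z^{-1/3})\bigr)$ with $\mathcal{D}_0^{\pm}$ a constant diagonal matrix. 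Substituting this and the known expansion \eqref{eq:N0asy} for $N_0$ into $N_\alpha = C_\alpha N_0\mathcal{D}$, the scalar factor $z^{-\alpha/3}$ comes out in front, and one verifies that the $\Boh(z^{-1/3})$ and $\Boh(z^{-2/3})$ corrections generated by $C_\alpha$ and by the full expansions of the $w_j$ cancel against one another, leaving $N_\alpha(z) = z^{-\alpha/3}\bigl(I+\Boh(z^{-1})\bigr)\diag(z^{1/3},1,z^{-1/3})\,L_{\pm}\,\diag(e^{\pm\alpha\pi\ii/3},e^{\mp\alpha\pi\ii/3},1)$, which is \eqref{infty-N}. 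The constant matrix $C_\alpha$ of \eqref{def:C-alpha} is chosen exactly so that this cancellation occurs and the prescribed constant right factor $\diag(e^{\pm\alpha\pi\ii/3},e^{\mp\alpha\pi\ii/3},1)$ appears with only an $\Boh(z^{-1})$ error.

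The main obstacle is this last step: it is the only place where genuine, if routine, computation is required, since one must track the $\Boh(z^{-1/3})$ and $\Boh(z^{-2/3})$ terms in the expansions of $w_1,w_2,w_3$ and the non-diagonal structure of $C_\alpha$ carefully enough to see that they conspire to reproduce \eqref{infty-N} exactly. The analyticity and the jump conditions, by contrast, follow directly from Proposition~\ref{prop:wfunction}(i) and the elementary branch structure of $(\cdot)^{-\alpha}$ along $\gamma_1^-$.
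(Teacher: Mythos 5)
Your proposal follows essentially the same route as the paper: the jump condition is reduced to the identity $J_{N_\alpha}=\mathcal{D}_-^{-1}J_{N_0}\mathcal{D}_+$ and then settled by the branch relations $D_{1,+}=D_{1,-}$, $D_{2,+}=e^{-\alpha\pi\ii}D_{3,-}$, $D_{3,+}=e^{-\alpha\pi\ii}D_{2,-}$ on $(-\infty,0)$ and $D_{1,+}=D_{2,-}$, $D_{2,+}=D_{1,-}$, $D_{3,+}=D_{3,-}$ on $(1,+\infty)$, exactly as the paper does; and the asymptotics at infinity is obtained by expanding $D_j(z)=w_j(z)^{-\alpha}$ (with the extra $e^{\alpha\pi\ii}$ for $D_3$) from the known large-$z$ expansion of $w_j$ and conjugating through $\diag(z^{1/3},1,z^{-1/3})L_\pm$, with $C_\alpha$ designed to absorb the fractional-power corrections. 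The only difference is one of completeness rather than method: where you assert that the $\Boh(z^{-1/3})$ and $\Boh(z^{-2/3})$ terms cancel against $C_\alpha$, the paper writes out the expansions of $D_1,D_2,D_3$ (and of $\mathcal N_j(w_k(z))$) to order $z^{-4/3}$ and carries the matrix product through to exhibit the residual $I+\Boh(z^{-1})$ explicitly (and in fact records the $(3,1)$-entry $\alpha/2^{2/3}$ of $\mathsf{N}_1$, which is needed later); you have correctly identified this as the single genuinely computational step.
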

\begin{proof}
From the definitions \eqref{def:di} with special choice of the branch cut, it is readily seen that
		\begin{equation}\label{jump-D0}
			D_{1,+} (x)= D_{1,-}(x), \qquad
			D_{2,+} (x)= e^{-\alpha \pi \ii}D_{3,-}(x), \qquad
			D_{3,+} (x)= e^{-\alpha \pi \ii}D_{2,-}(x),
		\end{equation}
for $x \in (-\infty, 0)$, and
		\begin{equation}\label{jupm-D1}
			D_{1,+} (x)= D_{2,-}(x), \qquad D_{2,+} (x)= D_{1,-}(x), \qquad D_{3,+} (x)= D_{3,-}(x).
		\end{equation}
for $x \in (1, +\infty)$. A combination of \eqref{jump-N} and \eqref{def:JNalpha} with $\alpha=0$, \eqref{def-N} and the above relations implies that $N_\alpha(z)$ is indeed analytic in $\mathbb{C} \setminus ((-\infty, 0]\cup [1, +\infty))$ and satisfies the jump condition \eqref{jump-N} and \eqref{def:JNalpha}.

In view of \eqref{def:di} and the asymptotic behaviors of the $w$-functions given in \eqref{infty-w2} and \eqref{infty-w3}, we have, as $z \to \infty $,
\begin{align}
& D_1(z) = \begin{cases}
e^{\frac{\alpha \pi \ii}{3}}2^{\frac{\alpha}{3}}z^{-\frac{\alpha}{3}}\left(1 + \frac{\alpha \omega^2}{2^{2/3}}z^{-\frac 13} + \frac{\alpha (\alpha-1) \omega}{2^{7/3}}z^{-\frac 23} + \frac{\alpha(\alpha^2 -3\alpha + 2^{4/3} -4)}{24}z^{-1}\right.\\
\quad\left.+ \frac{\alpha(\alpha + 1)(\alpha^2 - 7 \alpha + 2^{10/3} -6) \omega^2}{96 \cdot 2^{2/3}} z^{-\frac 43} + \Boh(z^{-\frac 53})\right), & \Im{z} > 0,\\
e^{-\frac{\alpha \pi \ii}{3}}2^{\frac{\alpha}{3}}z^{-\frac{\alpha}{3}}\left(1 + \frac{\alpha \omega}{2^{2/3}}z^{-\frac 13} + \frac{\alpha (\alpha-1) \omega^2}{2^{7/3}}z^{-\frac 23} + \frac{\alpha(\alpha^2 -3\alpha + 2^{4/3} -4)}{24}z^{-1}\right.\\
\quad\left.+ \frac{\alpha(\alpha + 1)(\alpha^2 - 7 \alpha + 2^{10/3} -6) \omega}{96 \cdot 2^{2/3}} z^{-\frac 43} + \Boh(z^{-\frac 53})\right), & \Im{z} < 0,
\end{cases}\label{D1}\\
&D_2(z) = \begin{cases}
e^{-\frac{\alpha \pi \ii}{3}}2^{\frac{\alpha}{3}}z^{-\frac{\alpha}{3}}\left(1 + \frac{\alpha \omega}{2^{2/3}}z^{-\frac 13} + \frac{\alpha (\alpha-1) \omega^2}{2^{7/3}}z^{-\frac 23} + \frac{\alpha(\alpha^2 -3\alpha + 2^{4/3} -4)}{24}z^{-1}\right.\\
\quad\left.+ \frac{\alpha(\alpha + 1)(\alpha^2 - 7 \alpha + 2^{10/3} -6) \omega}{96 \cdot 2^{2/3}} z^{-\frac 43} + \Boh(z^{-\frac 53})\right), & \Im{z} > 0,\\
e^{\frac{\alpha \pi \ii}{3}}2^{\frac{\alpha}{3}}z^{-\frac{\alpha}{3}}\left(1 + \frac{\alpha \omega^2}{2^{2/3}}z^{-\frac 13} + \frac{\alpha (\alpha-1) \omega}{2^{7/3}}z^{-\frac 23} + \frac{\alpha(\alpha^2 -3\alpha + 2^{4/3} -4)}{24}z^{-1}\right.\\
\quad\left.+ \frac{\alpha(\alpha + 1)(\alpha^2 - 7 \alpha + 2^{10/3} -6) \omega^2}{96 \cdot 2^{2/3}} z^{-\frac 43} + \Boh(z^{-\frac 53})\right), & \Im{z} < 0,
\end{cases}\label{D2}\\
& D_3(z) = 2^{\frac{\alpha}{3}}z^{-\frac{\alpha}{3}}\left(1 + \frac{\alpha}{2^{2/3}}z^{-\frac 13} + \frac{\alpha (\alpha-1)}{2^{7/3}}z^{-\frac 23} + \frac{\alpha(\alpha^2 -3\alpha + 2^{4/3} -4)}{24}z^{-1}\right.\nonumber\\
&\qquad \qquad \left.+ \frac{\alpha(\alpha + 1)(\alpha^2 - 7 \alpha + 2^{10/3} -6)}{96 \cdot 2^{2/3}} z^{-\frac 43} + \Boh(z^{-\frac 53})\right).\label{D3}
\end{align}
Inserting the above equations into \eqref{def-N}, together with \eqref{eq:N0asy}, gives us, if $\Im{z}>0$,
\begin{align}
N_\alpha(z) & = \left(I +  \Boh(z^{-1})\right)2^{\frac{\alpha}{3}}z^{-\frac{\alpha}{3}}C_{\alpha} \diag{(z^{\frac{1}{3}}, 1, z^{-\frac{1}{3}})} L_{+}\left(I +
\frac{\alpha}{2^{2/3}}z^{-\frac 13} \diag(\omega^2, \omega, 1)\right.\nonumber\\
&\quad+\frac{\alpha (\alpha-1)}{2^{7/3}}z^{-\frac 23} \diag(\omega, \omega^2, 1)+\frac{\alpha(\alpha^2 -3\alpha + 2^{4/3} -4)}{24}z^{-1} \diag(1,1,1)\nonumber\\
&\quad\left. + \frac{\alpha(\alpha + 1)(\alpha^2 - 7 \alpha + 2^{10/3} -6)}{96 \cdot 2^{2/3}} z^{-\frac 43}\diag(\omega^2, \omega, 1) +\Boh(z^{-\frac 53})\right) \diag{(e^{\frac{\alpha \pi \ii}{3}}, e^{-\frac{\alpha \pi \ii}{3}}, 1)},
\end{align}
and if $\Im{z}<0$,
\begin{align}
N_\alpha(z) & = \left(I +  \Boh(z^{-1})\right)2^{\frac{\alpha}{3}}z^{-\frac{\alpha}{3}}C_{\alpha} \diag{(z^{\frac{1}{3}}, 1, z^{-\frac{1}{3}})} L_{-}\left(I +
\frac{\alpha}{2^{2/3}}z^{-\frac 13} \diag(\omega, \omega^2, 1)\right.\nonumber\\
&\quad+\frac{\alpha (\alpha-1)}{2^{7/3}}z^{-\frac 23} \diag(\omega^2, \omega, 1)+\frac{\alpha(\alpha^2 -3\alpha + 2^{4/3} -4)}{24}z^{-1} \diag(1,1,1)\nonumber\\
&\quad\left. + \frac{\alpha(\alpha + 1)(\alpha^2 - 7 \alpha + 2^{10/3} -6)}{96 \cdot 2^{2/3}} z^{-\frac 43}\diag(\omega, \omega^2, 1) +\Boh(z^{-\frac 53})\right) \diag{(e^{-\frac{\alpha \pi \ii}{3}}, e^{\frac{\alpha \pi \ii}{3}}, 1)},
\end{align}
where $C_{\alpha}$ and $L_{\pm}$ are given in \eqref{def:C-alpha} and \eqref{def-L+-}.

After a straightforward calculation, we have
\begin{equation}\label{infty-N1}
	N_\alpha(z) = \left(I + \frac{\mathsf{N}_1}{z} + \Boh(z^{-2})\right)z^{-\frac{\alpha}{3}} \diag{(z^{\frac{1}{3}}, 1, z^{-\frac{1}{3}})} L_{\pm} \diag{(e^{\pm \frac{\alpha \pi \ii}{3}}, e^{\mp \frac{\alpha \pi \ii}{3}}, 1)},\quad \pm \Im{z} >0,
	\end{equation}
where
	\begin{equation}\label{def-N1}
	\mathsf{N}_1 = \begin{pmatrix}
	\ast & \ast & \ast\\
	\ast & \ast & \ast\\
	\alpha/2^{\frac23} & \ast & \ast
	\end{pmatrix},
	\end{equation}
as shown in \eqref{infty-N}.

This completes the proof of Lemma \ref{pro-N}.
\end{proof}
Finally, from the asymptotic behaviors of the $w$-functions given in items (iii) and (iv) of Proposition \ref{prop:wfunction}, it is readily seen the following proposition regarding the refined asymptotic behaviors of the global parametrix $N_\alpha(z)$ near $z=0$ and $z=1$.
\begin{proposition}
	With $N_\alpha(z)$ defined in \eqref{def-N}, we have, as $z \to 0$,
	\begin{align}\label{N-0}
	N_\alpha(z) &= \frac{C_{\alpha}}{9}\begin{pmatrix}
		-5 \cdot 2^{-\frac 23} & -7 \cdot 2^{\frac 13} & 2^{-\frac 23}\\
		4 & -2 & -2\\
		-2^{\frac 53} & 2^{\frac 83} & -2^{\frac 53}
	\end{pmatrix}\left[z^{-\frac{1}{4}} \begin{pmatrix}
	0 & 0 & 0\\
	0 & 0 & 0\\
	0 & - \ii \frac{3^{9/4}}{4} & -\frac{3^{9/4}}{4}
	\end{pmatrix} +  \begin{pmatrix}
	\frac{3 \sqrt{3}}{2} & 0 & 0\\
	0 & 0 & 0\\
	0 & 0 & 0
	\end{pmatrix}\right.
     \nonumber \\
	 &\left. ~~ + z^{\frac{1}{4}}  \begin{pmatrix}
	0 & 0 & 0\\
	0 & \ii \frac{3^{3/4}}{2} & -\frac{3^{3/4}}{2}\\
	0 & \ii \frac{3^{7/4}}{4} & -\frac{3^{7/4}}{4}
	\end{pmatrix} + \Boh(z^{\frac{3}{4}})\right] \diag{\left(\left(\frac{2}{3}\right)^{\alpha}, 3^{\frac{\alpha}{2}}z^{-\frac{\alpha}{2}}, 3^{\frac{\alpha}{2}}z^{-\frac{\alpha}{2}}\right)}.
	\end{align}
If $z \to 1$ and $\Im{z}>0$, we have
	\begin{align}\label{N1}
    N_\alpha(z) &= \frac{C_{\alpha}}{9}\begin{pmatrix}
		-5 \cdot 2^{-\frac 23} & -7 \cdot 2^{\frac 13} & 2^{-\frac 23}\\
		4 & -2 & -2\\
		-2^{\frac 53} & 2^{\frac 83} & -2^{\frac 53}
	\end{pmatrix}\left[3^{\frac{1}{4}}e^{\frac{\pi \ii}{4}}(z-1)^{-\frac{1}{4}}\begin{pmatrix}
	1 & -\ii & 0\\
	-\frac{1}{2} &  \frac{\ii}{2} & 0\\
	\frac{1}{4} & -\frac{\ii}{4} & 0
	\end{pmatrix} \right.
    \nonumber \\
   & \left. ~~  - \frac{2^{\alpha}}{\sqrt{3}} \begin{pmatrix}
	0 & 0 & \frac{1}{2}\\
	0 & 0 & 2\\
	0 & 0 & 8
	\end{pmatrix}
	+  3^{-\frac{1}{4}} e^{\frac{\pi \ii}{4}} (z-1)^{\frac{1}{4}}
	\begin{pmatrix}
	\ii \left(\alpha -\frac{5}{3}\right) & -\left(\alpha -\frac{5}{3}\right) & 0\\
	-\ii \left(\frac{\alpha}{2}+\frac{2}{3}\right) & \frac{\alpha}{2}+\frac{2}{3} & 0\\
	\ii \left(\frac{\alpha}{4}+\frac{13}{12}\right) & -\left(\frac{\alpha}{4}+\frac{13}{12}\right) & 0
	\end{pmatrix} \right.
   \nonumber
\\
	& \left. ~~ +\frac{3^{1/4}}{9}e^{\frac{\pi \ii}{4}}(z-1)^{\frac{3}{4}}
	\begin{pmatrix}
	-\frac{\alpha(3\alpha +5)}{2} -\frac{1}{12} & \ii \left(-\frac{\alpha(3\alpha +5)}{2} -\frac{1}{12}\right) & 0\\
	\frac{\alpha(3\alpha +13)}{4}-\frac{35}{24} & -\ii \left(\frac{\alpha(3\alpha +13)}{4} -\frac{35}{24}\right) & 0\\
	-\frac{\alpha(3\alpha +31)}{8} -\frac{253}{48} & \ii \left(\frac{\alpha(3\alpha +31)}{8} +\frac{253}{48}\right) & 0
	\end{pmatrix} \right.
\nonumber
\\
	&  \left. ~~ + \frac{2^{\alpha}}{9\sqrt{3}}(z-1)
	\begin{pmatrix}
	0 & 0 & 2\alpha-\frac{8}{3}\\
	0 & 0 & 8\alpha-\frac{14}{3}\\
	0 & 0 & 32\alpha+\frac{16}{3}
	\end{pmatrix}
     \nonumber \right.
\\
	& \left. ~~+ \frac{3^{-1/4}}{54}e^{\frac{\pi \ii}{4}}(z-1)^{\frac{5}{4}}
	\begin{pmatrix}
	-\ii \left(3\alpha^3 - \frac{15 \alpha}{2} - \frac{1}{2}\right) & 3\alpha^3 -\frac{15 \alpha}{2} - \frac{1}{2} & 0\\
	\ii \left(\frac{3\alpha^3}{2} + \frac{27 \alpha^2}{2} + \frac{39\alpha}{4}-7 \right) & -\left(\frac{3\alpha^3}{2} + \frac{27 \alpha^2}{2} + \frac{39\alpha}{4}-7 \right) & 0\\
	-\ii \frac{6 \alpha^3 + 108\alpha^2 + 417 \alpha + 269}{8} & \frac{6 \alpha^3 + 108\alpha^2 + 417 \alpha + 269}{8} & 0
	\end{pmatrix} \right.
\nonumber
\\
	& \left. ~~  + \Boh((z-1)^{\frac{7}{4}}) \right].
\end{align}
\end{proposition}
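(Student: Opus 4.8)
The plan is to read everything off the factorization \eqref{def-N}, $N_\alpha(z)=C_\alpha N_0(z)\diag(D_1(z),D_2(z),D_3(z))$, with $C_\alpha$ the constant matrix in \eqref{def:C-alpha}, $N_0$ the explicit matrix product in \eqref{def-tildeN}--\eqref{def-N123}, and $D_i$ given by \eqref{def:di}. Since $C_\alpha$ plays no role in the local analysis, it remains to expand the matrix with entries $\mathcal N_i(w_j(z))$ together with the diagonal factor $\diag(D_1,D_2,D_3)$ as $z\to0$ and as $z\to1$.

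First I would assemble the local expansions of all three $w$-functions at the branch points. Items (iii) and (iv) of Proposition \ref{prop:wfunction} already supply $w_2,w_3$ near $0$ and $w_1,w_2$ near $1$; the two missing ones follow immediately from Vieta's relation $w_1(z)+w_2(z)+w_3(z)=\tfrac32$ for \eqref{algebraic equation}. Near $z=0$ the $z^{1/2}$ and $z^{3/2}$ terms of $w_2+w_3$ cancel, so $w_1(z)=\tfrac32-\tfrac29 z+\Boh(z^2)$, and in particular $w_1(0)=\tfrac32$, $w_2(0)=w_3(0)=0$; near $z=1$ one likewise gets $w_3(z)=-\tfrac12-\tfrac29(z-1)+\Boh((z-1)^2)$, while $w_1(1)=w_2(1)=1$. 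I would carry each expansion to the order needed to reach the $\Boh(z^{3/4})$, resp. $\Boh((z-1)^{7/4})$, error term in the statement.

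Next I would substitute these into $\mathcal N_1,\mathcal N_2,\mathcal N_3$ from \eqref{def-N123} and into $D_1,D_2,D_3$ from \eqref{def:di}. Near $z=0$: since $w_1\to\tfrac32$ is a regular point, the first column of $N_0$ is analytic in $z$ and contributes only integer powers, with leading vector $\mathcal N_i(\tfrac32)$; since $w_2\sim\tfrac{\sqrt3}{3}z^{1/2}$ and $w_3\sim-\tfrac{\sqrt3}{3}z^{1/2}$ approach the branch point $w=0$, the factor $\sqrt{w_j(z)(w_j(z)-1)}$ behaves like a constant times $z^{1/4}$, so the second and third columns produce powers of $z^{1/4}$ beginning at $z^{-1/4}$. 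On the other side, $D_1(z)=w_1(z)^{-\alpha}=(2/3)^{\alpha}(1+\Boh(z))$, whereas $D_2(z)=3^{\alpha/2}z^{-\alpha/2}(1+\Boh(z^{1/2}))$ and $D_3(z)=e^{\alpha\pi\ii}w_3(z)^{-\alpha}=3^{\alpha/2}z^{-\alpha/2}(1+\Boh(z^{1/2}))$, the prefactor $e^{\alpha\pi\ii}$ compensating the sign of $w_3$ against the chosen branch. Pulling $\diag((2/3)^{\alpha},3^{\alpha/2}z^{-\alpha/2},3^{\alpha/2}z^{-\alpha/2})$ out to the right, absorbing the residual $\Boh(z^{1/2})$ factors into the columns, multiplying out $\tfrac19(\text{const})\cdot(\mathcal N_i(w_j))\cdot\diag(D_i)$, and collecting by powers of $z^{1/4}$ reproduces \eqref{N-0}. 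Near $z=1$ the bookkeeping is symmetric with the roles interchanged: $w_3\to-\tfrac12$ is regular, so the third column of $N_0$ is analytic in $z-1$ and $D_3(z)=e^{\alpha\pi\ii}w_3(z)^{-\alpha}=-2^{\alpha}(1+\Boh(z-1))$; while $w_1,w_2\to1$ with $w_1(z)-1,w_2(z)-1=\Boh((z-1)^{1/2})$ approach the branch point $w=1$, so $\sqrt{w_{1,2}(z)(w_{1,2}(z)-1)}$ is a constant times $(z-1)^{1/4}$, the first two columns produce powers of $(z-1)^{1/4}$ beginning at $(z-1)^{-1/4}$, and $D_1(z),D_2(z)=1+\Boh((z-1)^{1/2})$. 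Collecting again by powers of $(z-1)^{1/4}$ yields \eqref{N1}.

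The only genuine difficulty is the branch bookkeeping. The square root in \eqref{def-N123} is cut along $w_{2,-}((-\infty,0]\cup[1,+\infty))$ and $z^{\alpha}$ in \eqref{def:di} along $\gamma_1^-=w_{2,-}(-\infty,0)$, and one must determine how these cuts sit near $w=0$, $w=1$ and $w=-\tfrac12$ — using \eqref{arg-eta} and the computations in the proof of Proposition \ref{prop:wfunction} — in order to fix the leading coefficients, and hence all signs, of $\sqrt{w_j(z)(w_j(z)-1)}$ and of $w_j(z)^{-\alpha}$ consistently across the three sheets. Once these are pinned down, matching \eqref{N-0} and \eqref{N1} term by term is a long but entirely mechanical expansion.
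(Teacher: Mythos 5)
Your proposal is correct and takes essentially the same approach as the paper, which simply states that the proposition ``is readily seen'' from the local expansions of the $w$-functions and the formula $N_\alpha = C_\alpha N_0\,\diag(D_1,D_2,D_3)$; you have spelled out the same mechanical expansion, correctly filling in the missing $w$-function at each branch point via Vieta's relation $w_1+w_2+w_3=\tfrac32$ and correctly flagging the branch bookkeeping for $\sqrt{w(w-1)}$ and $w^{-\alpha}$ as the only delicate point.
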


Since the jump matrices for $S$ and $N_\alpha$ are not uniformly close to each other near $z=0$ and $z=1$, we next construct local parametrices at these two points, respectively.

\subsection{Local parametrices near $z=0$ and $z=1$}
In a small disc $D(0,\varepsilon)$ centered at $0$, we seek a $2 \times 2$ matrix-valued function $P^{(0)}(z)$ satisfying an RH problem as follows.
\begin{rhp}\label{rhp-p0}
\hfill
\begin{itemize}
	\item [\rm (a)] $P^{(0)}(z)$ is analytic in $D(0, \varepsilon) \setminus \Sigma_T$, where $\Sigma_T$ is defined in \eqref{def:sigmaT}.
	\item [\rm (b)] For $z \in D(0, \varepsilon) \cap \Sigma_T$, we have
	\begin{equation}\label{jump-P0}
		P^{(0)}_+(z) = P^{(0)}_-(z) \begin{cases}
			\begin{pmatrix}
	     			1 & 0 & 0\\
	     			0 & 1 & e^{\alpha \pi \ii}e^{s^{2/3}(\lambda_{2}(z)-\lambda_{3}(z))}\\
	     			0 & 0 & 1
			\end{pmatrix}, &\quad z \in D(0, \varepsilon) \cap\Sigma_2,\\
			\begin{pmatrix}
	     			1 & 0 & 0\\
	     			0 & 0 & -e^{-\alpha \pi \ii}\\
	     			0 & e^{-\alpha \pi \ii} & 0
			\end{pmatrix}, &\quad z \in D(0, \varepsilon) \cap \Sigma_3,\\
			\begin{pmatrix}
	     			1 & 0 & 0\\
	     			0 & 1 & e^{-\alpha \pi \ii}e^{s^{2/3}(\lambda_{2}(z)-\lambda_{3}(z))}\\
	     			0 & 0 & 1
			\end{pmatrix}, &\quad z \in D(0, \varepsilon) \cap \Sigma_4.
		\end{cases}
	\end{equation}
	\item [\rm (c)] As $s \to \infty$, we have the matching condition
	\begin{equation}\label{match-P0}
		P^{(0)}(z) = \left(I + \Boh(s^{-\frac{2}{3}})\right) N_\alpha(z), \qquad z\in \partial D(0,\varepsilon),
	\end{equation}
	where $N_\alpha(z)$ is given in \eqref{def-N}.
\end{itemize}
\end{rhp}

The RH problem \ref{rhp-p0} for $P^{(0)}(z)$ can be solved explicitly with the aid of the Bessel parametrix $\Phi_{\alpha}^{(\text{Bes})}$ described in Appendix \ref{A}. To this aim, we introduce the local conformal mapping
\begin{equation}\label{def-tildef}
	f(z) = \frac{1}{4}(\lambda_2(z)-\lambda_3(z))^2 = c_1^2 z + \Boh(z^2), \qquad z\to 0,
\end{equation}
where $c_1$ is given in \eqref{tildec}; see \eqref{0-lambda2} and \eqref{0-lambda3}. We then define
\begin{align}\label{def-P0}
	P^{(0)}(z) & =  E(z) \diag \left(1, f(z)^{-\frac{\alpha}{2}}, f(z)^{-\frac{\alpha}{2}} \right)
	\begin{pmatrix}
		1 & 0 & 0\\
		0 & \left(\Phi_{\alpha}^{(\text{Bes})}\right)_{11}(s^{\frac{4}{3}}f(z)) & \left(\Phi_{\alpha}^{(\text{Bes})}\right)_{12}(s^{\frac{4}{3}}f(z))\\
		0 & \left(\Phi_{\alpha}^{(\text{Bes})}\right)_{21}(s^{\frac{4}{3}}f(z)) & \left(\Phi_{\alpha}^{(\text{Bes})}\right)_{22}(s^{\frac{4}{3}}f(z))
		\end{pmatrix}
        \nonumber
        \\
		&~~ \times \diag\left(1, e^{-\frac{s^{1/3}(\lambda_2(z)-\lambda_3(z))}{2}},e^{\frac{s^{1/3}(\lambda_2(z)-\lambda_3(z))}{2}}\right),
\end{align}
where
\begin{equation}\label{def-E0}
	E(z) = \frac{ N_\alpha(z)}{\sqrt{2}}
	\begin{pmatrix}
		\sqrt{2} & 0 & 0\\
		0 & -\ii \pi^{\frac{1}{2}} s^{\frac{1}{3}} f(z)^{\frac{1}{4}} & \pi^{-\frac{1}{2}} s^{-\frac{1}{3}} f(z)^{-\frac{1}{4}}\\
		0 & \pi^{\frac{1}{2}} s^{\frac{1}{3}} f(z)^{\frac{1}{4}} & -\ii \pi^{-\frac{1}{2}} s^{-\frac{1}{3}} f(z)^{-\frac{1}{4}}
	\end{pmatrix}
	\diag \left(1, f(z)^{\frac{\alpha}{2}}, f(z)^{\frac{\alpha}{2}} \right),
\end{equation}
and $\Phi_{\alpha}^{(\text{Bes})}$ solves the RH problem \ref{rhp:Bessel}.
\begin{lemma}\label{lem-P0}
The matrix-valued function $P^{(0)}(z)$ defined in \eqref{def-P0} solves the RH problem \ref{rhp-p0}.
\end{lemma}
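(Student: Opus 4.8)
The plan is to read off $P^{(0)}$ in \eqref{def-P0} as a Bessel local parametrix, built from the Bessel model $\Phi_{\alpha}^{(\text{Bes})}$ of Appendix \ref{A} through the conformal change of variable $f$ of \eqref{def-tildef}, and then to check three things: (i) the prefactor $E$ in \eqref{def-E0} extends to a matrix analytic on the whole disc $D(0,\varepsilon)$; (ii) $P^{(0)}$ has exactly the jumps listed in \eqref{jump-P0}; (iii) the matching \eqref{match-P0} holds on $\partial D(0,\varepsilon)$. Of these, (i) carries the content, and (ii)--(iii) are then essentially routine.

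For (i), I would first verify that $f$ is analytic and univalent on $D(0,\varepsilon)$ (after shrinking $\varepsilon$ if needed), with $f(0)=0$ and $f'(0)=c_1^2$: by item (iii) of Proposition \ref{pro-lambda} one has $\lambda_2(z)-\lambda_3(z)=2c_1 z^{1/2}(1+\Boh(z))$, so squaring cancels the square-root branch point and $f(z)=c_1^2 z+\Boh(z^2)$, while $c_1$ is real and nonzero for $s$ large. Hence $f^{\pm1/4}$ and $f^{\pm\alpha/2}$ are single-valued off the cut $(-\varepsilon,0]$, and the only places $E$ could fail to be analytic are the segment $(-\varepsilon,0)$---where $N_\alpha$ jumps by $J_{N_\alpha}$ of \eqref{def:JNalpha} and the fractional powers acquire phases---and the point $z=0$. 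Along $(-\varepsilon,0)$, a short computation should show that the phase jumps of $\diag(1,f^{\alpha/2},f^{\alpha/2})$ cancel against the anti-diagonal $e^{\pm\alpha\pi\ii}$-structure of $J_{N_\alpha}$, so $E$ is continuous there and hence analytic. At $z=0$ I would use the explicit expansion \eqref{N-0}: the coefficient of $z^{-1/4}$ in $N_\alpha$ is a matrix whose only nonzero row is the third, proportional to $(0,-\ii,-1)$; the coefficient of $z^{1/4}$ has its last two rows proportional to $(0,\ii,-1)$; and the $z^{0}$ term vanishes in the second and third columns. Right-multiplying these by the $2\times2$ block of \eqref{def-E0} built from $f^{\pm1/4}$ and then by $\diag(1,f^{\alpha/2},f^{\alpha/2})$, one finds that $(0,-\ii,-1)$ is sent to a multiple of $f^{1/4}$ in one slot and to $0$ in the other, while $(0,\ii,-1)$ is sent to a multiple of $f^{-1/4}$ in the complementary slot; together with the trailing factor $\diag((2/3)^\alpha,3^{\alpha/2}z^{-\alpha/2},3^{\alpha/2}z^{-\alpha/2})$ in \eqref{N-0} (which commutes with that block and is cancelled by the $f^{\alpha/2}$), every fractional power of $z$ disappears and $E$ stays bounded as $z\to 0$. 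By Riemann's removable-singularity theorem, $E$ is then analytic on all of $D(0,\varepsilon)$.

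Granted (i), I would handle (ii) by bookkeeping. Off $(-\varepsilon,0)$ and $0$, the factor $E$ and the outer exponential diagonal of \eqref{def-P0} are analytic, so on the arcs of $\Sigma_2,\Sigma_3,\Sigma_4$ inside the disc the jump of $P^{(0)}$ is the conjugation, by that diagonal, of the $3\times3$-embedded jump of $\Phi_{\alpha}^{(\text{Bes})}(s^{4/3}f(\cdot))$ (on $\Sigma_3$ one also picks up the jumps of $\diag(1,f^{-\alpha/2},f^{-\alpha/2})$ and of the outer exponential, the latter since $\lambda_2-\lambda_3$ changes sign there). Because $c_1^2>0$, $f$ maps the local arcs of $\Sigma_2,\Sigma_3,\Sigma_4$ onto the three jump rays of RH problem \ref{rhp:Bessel}; translating the Bessel jumps and conjugating then reproduces exactly the three matrices of \eqref{jump-P0}, the $e^{\pm\alpha\pi\ii}$ and $e^{s^{2/3}(\lambda_2-\lambda_3)}$ dressings emerging from the conjugating exponentials and from the $f^{\pm\alpha/2}$ phase jumps. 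For (iii), on $\partial D(0,\varepsilon)$ the argument $s^{4/3}f(z)$ stays bounded away from $0$ and has size $\asymp s^{4/3}$, so I would insert the large-argument expansion of $\Phi_{\alpha}^{(\text{Bes})}$ from Appendix \ref{A}: the definition \eqref{def-E0} of $E$ is engineered so that its leading term, after conjugation by the $f^{\pm1/4}$ and $f^{\pm\alpha/2}$ factors and the outer exponential diagonal, collapses to $N_\alpha(z)$, while the next Bessel correction is $\Boh((s^{4/3}f(z))^{-1/2})=\Boh(s^{-2/3})$ uniformly on $\partial D(0,\varepsilon)$, and conjugation by the bounded factors preserves this bound; this yields \eqref{match-P0}. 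I expect the main obstacle to be part (i): making the cancellation of the $z^{-1/4}$- and $z^{-\alpha/2}$-type singularities of $N_\alpha$ against the $f^{1/4}$- and $f^{\alpha/2}$-factors completely explicit through \eqref{N-0}, while keeping careful track of the branch choices and of the interplay between the $2\times2$ Bessel block and the surrounding $3\times3$ structure.
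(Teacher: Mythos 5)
Your proposal is correct and follows essentially the same approach as the paper: verify analyticity of $E$ on the disc by checking that the jump across $(-\varepsilon,0)$ cancels and that $E$ stays bounded at $z=0$ via \eqref{N-0}, read off the jumps of $P^{(0)}$ from the Bessel model conjugated by the analytic factors, and use the large-argument Bessel asymptotics \eqref{infty-Bessel} to obtain the matching \eqref{match-P0}. The paper carries out these steps by explicit matrix computations, in particular giving $E(0)$ and the correction term $J_1(z)$ in closed form, but the logical structure is identical to yours.
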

\begin{proof}
We first show the analyticity of $E(z)$ near $z=0$. According to its definition in \eqref{def-E0}, the possible jump is on
$(- \varepsilon ,0)$. It follows from \eqref{jump-N} and \eqref{def-tildef} that, if $z\in (-\varepsilon, 0)$,
\begin{align}
&E_-(z)^{-1}E_+(z)
\nonumber
\\
&= \frac{1}{2}\diag\left(1, f_-(z)^{-\frac{\alpha}{2}}, f_-(z)^{-\frac{\alpha}{2}} \right)
	\begin{pmatrix}
		\sqrt{2} & 0 & 0\\
		0 & \ii \pi^{-\frac{1}{2}} s^{-\frac{1}{3}} f_-(z)^{-\frac{1}{4}} & \pi^{-\frac{1}{2}} s^{-\frac{1}{3}} f_-(z)^{-\frac{1}{4}}\\
		0 & \pi^{\frac{1}{2}} s^{\frac{1}{3}} f_-(z)^{\frac{1}{4}} & -\ii \pi^{\frac{1}{2}} s^{\frac{1}{3}} f_-(z)^{\frac{1}{4}}
	\end{pmatrix}
\nonumber
\\
	&~~ \times \begin{pmatrix}
			1 & 0 & 0\\
			0 & 0 & -e^{-\alpha \pi \ii}\\
			0 & e^{-\alpha \pi \ii} & 0
		\end{pmatrix}
		\begin{pmatrix}
		\sqrt{2} & 0 & 0\\
		0 & -\ii \pi^{\frac{1}{2}} s^{\frac{1}{3}} f_+(z)^{\frac{1}{4}} & \pi^{-\frac{1}{2}} s^{-\frac{1}{3}} f_+(z)^{-\frac{1}{4}}\\
		0 & \pi^{\frac{1}{2}} s^{\frac{1}{3}} f_+(z)^{\frac{1}{4}} & -\ii \pi^{-\frac{1}{2}} s^{-\frac{1}{3}} f_+(z)^{-\frac{1}{4}}
	\end{pmatrix}
\nonumber \\
	&~~ \times
\diag \left(1,f_+(z)^{\frac{\alpha}{2}},f_+(z)^{\frac{\alpha}{2}} \right)
	\nonumber
    \\
	& = \diag\left(1, f_-(z)^{-\frac{\alpha}{2}} e^{-\alpha \pi \ii}f_+(z)^{\frac{\alpha}{2}}, f_-(z)^{-\frac{\alpha}{2}} e^{-\alpha \pi \ii}f_+(z)^{\frac{\alpha}{2}}\right)=I.
\end{align}
Moreover, we see from \eqref{N-0} and \eqref{def-tildef} that
\begin{align}
E(0) &= \frac{C_{\alpha}}{9 \sqrt{2}}\begin{pmatrix}
		-5 \cdot 2^{-\frac 23} & -7 \cdot 2^{\frac 13} & 2^{-\frac 23}\\
		4 & -2 & -2\\
		-2^{\frac 53} & 2^{\frac 83} & -2^{\frac 53}
	\end{pmatrix}
    \nonumber \\
	&~~ \times\begin{pmatrix}
	2^{\alpha - \frac{1}{2}} 3^{\frac{3}{2} - \alpha} & 0 & 0\\
	0 & 0 & \ii 3^{\frac{3}{4} +\frac{\alpha}{2}}|c_1|^{\alpha-\frac{1}{2}} \pi^{-\frac{1}{2}} s^{-\frac{1}{3}}\\
	0 &  -\frac{3^{9/4 +\alpha/2}}{2} |c_1|^{\alpha+\frac{1}{2}} \pi^{\frac{1}{2}} s^{\frac{1}{3}}  & \ii \frac{3^{7/4 +\alpha/2}}{2} |c_1|^{\alpha-\frac{1}{2}} \pi^{-\frac{1}{2}} s^{-\frac{1}{3}}
	\end{pmatrix},
	\end{align}
where $C_\alpha$ and $c_1$ are given in \eqref{def:C-alpha} and \eqref{tildec}, respectively. Thus, $E(z)$ is indeed analytic in $D(0, \varepsilon)$. It is then straightforward to verify $P^{(0)}(z)$ satisfies the jump condition \eqref{jump-P0} by using \eqref{jump-Bessel}, item (i) of Proposition \ref{pro-de} and the analyticity of $E(z)$.

It remains to check the matching condition \eqref{match-P0}. As $s \to \infty$, applying \eqref{def-P0}, \eqref{def-E0} and the asymptotic behavior of the Bessel parametrix $\Phi_{\alpha}^{(\text{Bes})}(z)$ at infinity in \eqref{infty-Bessel} yields
\begin{equation}
P^{(0)}(z) N_\alpha(z)^{-1} =  I + \frac{J_1(z)}{ s^{2/3}}+ \Boh(s^{-\frac{4}{3}})
\end{equation}
with
\begin{equation}\label{def-J01}
J_1(z) = \frac{1}{8 f(z)^{1/2}}N_\alpha(z)\begin{pmatrix}
0 & 0 & 0\\
0 & 1 + 4 \alpha^2 & -2 \ii\\
0 & -2 \ii & -1 - 4\alpha^2
\end{pmatrix}
N_\alpha(z)^{-1},
\end{equation}
which is \eqref{match-P0}.

This completes the proof of Lemma \ref{lem-P0}.
\end{proof}



Similarly, near $z = 1$, we intend to find a function $P^{(1)}(z)$ satisfying the following RH problem.
\begin{rhp}\label{rhp-p1}
\hfill
\begin{itemize}
	\item [\rm (a)] $P^{(1)}(z)$ is analytic in $D(1, \varepsilon) \setminus \Sigma_T$, where $\Sigma_T$ is defined in \eqref{def:sigmaT}.
	\item [\rm (b)] For $z \in D(1, \varepsilon) \cap \Sigma_T$, we have
	\begin{equation}\label{jump-P1}
		P^{(1)}_+(z) = P^{(1)}_-(z) \begin{cases}
			\begin{pmatrix}
	    			0 & 1 & 0\\
	    			-1 & 0 & 0\\
	    			0 & 0 & 1
			\end{pmatrix}, &\quad z \in  D(1, \varepsilon) \cap \Sigma_0^{(1)},\\
			\begin{pmatrix}
	     			1 & 0 & 0\\
	     			e^{s^{2/3}(\lambda_{2}(z)-\lambda_{1}(z))} & 1 & 0\\
	     			0 & 0 & 1
			\end{pmatrix}, &\quad z \in  D(1, \varepsilon) \cap \Sigma_1^{(1)},\\
			\begin{pmatrix}
	     			1 & 0 & 0\\
	     			e^{s^{2/3}(\lambda_{2}(z)-\lambda_{1}(z))} & 1 & 0\\
	     			0 & 0 & 1
			\end{pmatrix}, &\quad z \in  D(1, \varepsilon) \cap \Sigma_5^{(1)}.		
		\end{cases}
	\end{equation}
	\item [\rm (c)] As $s \to \infty$, we have the matching condition
	\begin{equation}\label{match-P1}
		P^{(1)}(z) = \left(I + \Boh(s^{-\frac{2}{3}})\right) N_\alpha(z), \qquad z\in \partial D(1,\varepsilon),
	\end{equation}
where $N_\alpha(z)$ is given in \eqref{def-N}.
\end{itemize}
\end{rhp}

The RH problem \ref{rhp-p1} can be solved with the help of the Bessel parametrix $\Phi_{0}^{(\text{Bes})}$, following the similar spirit in the construction of $P^{(0)}(z)$. The conformal mapping now reads
\begin{equation}\label{def-f}
\tilde{f}(z) = \frac{1}{4} (\lambda_2(z)-\lambda_1(z))^2 = -\tilde{c}_1^2(z-1) - 2 \tilde{c}_1 \tilde{c}_3 (z-1)^2 + \Boh((z-1)^3),\qquad z\to 1,
\end{equation}
where $\tilde{c}_1$ and $\tilde{c}_3$ are defined in \eqref{def-c}; see \eqref{1-lambda1} and \eqref{1-lambda2}.  We now define
\begin{align}\label{def-P1}
P^{(1)}(z) & = \tilde{E}(z) \begin{pmatrix}
	\left(\Phi_{0}^{(\text{Bes})}\right)_{12}(s^{\frac{4}{3}}\tilde{f}(z)) & -\left(\Phi_{0}^{(\text{Bes})}\right)_{11}(s^{\frac{4}{3}}\tilde{f}(z)) & 0\\
	\left(\Phi_{0}^{(\text{Bes})}\right)_{22}(s^{\frac{4}{3}}\tilde{f}(z)) & -\left(\Phi_{0}^{(\text{Bes})}\right)_{21}(s^{\frac{4}{3}}\tilde{f}(z)) & 0\\
	0 & 0 & 1
	\end{pmatrix} \nonumber
   \\
	&~~ \times \diag\left( e^{\frac{s^{1/3}(\lambda_2(z)-\lambda_1(z))}{2}}, e^{-\frac{s^{1/3}(\lambda_2(z)-\lambda_1(z))}{2}},1\right),
	\end{align}
where
\begin{equation}\label{def-E1}
	\tilde{E}(z) = \frac{ N_\alpha(z)}{\sqrt{2}} \begin{pmatrix}
	\pi^{\frac{1}{2}} s^{\frac{1}{3}} \tilde{f}(z)^{\frac{1}{4}} & -\ii \pi^{-\frac{1}{2}} s^{-\frac{1}{3}} \tilde{f}(z)^{-\frac{1}{4}} & 0\\
	\ii \pi^{\frac{1}{2}} s^{\frac{1}{3}} \tilde{f}(z)^{\frac{1}{4}} & -\pi^{-\frac{1}{2}} s^{-\frac{1}{3}} \tilde{f}(z)^{-\frac{1}{4}} & 0\\
	0 & 0 & \sqrt{2}
	\end{pmatrix}
\end{equation}
and $\Phi_{\alpha}^{(\text{Bes})}$ solves the RH problem \ref{rhp:Bessel} with $\alpha=0$.
\begin{lemma}\label{lem-P1}
The matrix-valued function $P^{(1)}(z)$ defined in \eqref{def-P1} solves the RH problem \ref{rhp-p1}.
\end{lemma}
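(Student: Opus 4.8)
The plan is to follow the proof of Lemma~\ref{lem-P0} essentially verbatim, with two cosmetic differences: the Bessel building block now carries parameter $\alpha=0$, and it is inserted into the upper-left $2\times2$ block of $P^{(1)}$ with its first two columns interchanged and one of them negated, which is precisely the permutation that converts the Bessel cut jump into the jump $\bigl(\begin{smallmatrix}0&1\\-1&0\end{smallmatrix}\bigr)$ prescribed on $\Sigma_0^{(1)}$ in \eqref{jump-P1}. First I would check that the prefactor $\tilde E(z)$ in \eqref{def-E1} is analytic in all of $D(1,\varepsilon)$. The only curve inside the disc on which $\tilde E$ can possibly have a jump is $(1,1+\varepsilon)\subset\Sigma_0^{(1)}$, which is at the same time the branch cut of $N_\alpha$ and of $\tilde f(z)^{1/4}$ (note $\tilde f(z)<0$ on this ray by \eqref{def-f}); computing $\tilde E_-(z)^{-1}\tilde E_+(z)$ there and using the jump $J_{N_\alpha}$ from \eqref{def:JNalpha} together with the jump factor of $\tilde f(z)^{1/4}$ shows, exactly as in the $P^{(0)}$ case, that the two contributions cancel, so $\tilde E$ extends analytically across this ray. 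It then remains to rule out a pole at $z=1$: inserting the expansion \eqref{N1} of $N_\alpha$ near $z=1$ (whose leading term is of order $(z-1)^{-1/4}$) and the factors $\tilde f(z)^{\pm1/4}\sim|\tilde c_1|^{\pm1/2}(z-1)^{\pm1/4}$, read off from \eqref{def-f} and \eqref{def-c}, into \eqref{def-E1}, the structure of \eqref{N1}, in which the first two columns of every coefficient matrix are proportional with factor $\pm\ii$, makes the half-integer powers of $z-1$ cancel entrywise in the full $3\times3$ product, so $\tilde E(z)$ has a finite limit at $z=1$ (which, as in Lemma~\ref{lem-P0}, can even be written down explicitly); hence $\tilde E$ is analytic in $D(1,\varepsilon)$.

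With $\tilde E$ analytic, the jump relation \eqref{jump-P1} follows by bookkeeping. All jumps of $P^{(1)}$ are then carried by the Bessel factor and by the diagonal exponential $\diag\bigl(e^{s^{1/3}(\lambda_2-\lambda_1)/2},e^{-s^{1/3}(\lambda_2-\lambda_1)/2},1\bigr)$, which is analytic across $\Sigma_1^{(1)}$ and $\Sigma_5^{(1)}$. Pulling the jumps \eqref{jump-Bessel} of $\Phi_0^{(\text{Bes})}$ back through the conformal map $\zeta=s^{4/3}\tilde f(z)$ and conjugating by this diagonal factor, the cut jump of $\Phi_0^{(\text{Bes})}$ reproduces (after the column permutation) the jump $\bigl(\begin{smallmatrix}0&1&0\\-1&0&0\\0&0&1\end{smallmatrix}\bigr)$ on $\Sigma_0^{(1)}$, while the two lateral lower-triangular Bessel jumps reproduce the jumps $\bigl(\begin{smallmatrix}1&0&0\\e^{s^{2/3}(\lambda_2-\lambda_1)}&1&0\\0&0&1\end{smallmatrix}\bigr)$ on $\Sigma_1^{(1)}$ and $\Sigma_5^{(1)}$; here one uses the identity $\tilde f=\tfrac14(\lambda_2-\lambda_1)^2$ and the sign of $\tilde f$ on each ray, together with the branch conventions of \eqref{1-lambda1}--\eqref{1-lambda2}, to match the $\lambda$-exponents with the square root of the Bessel variable. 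The opening angles of $\Sigma_1^{(1)},\Sigma_5^{(1)}$ can be deformed within the sectors of analyticity so that $\zeta=s^{4/3}\tilde f(z)$ carries these rays onto the lateral rays of the Bessel RH problem~\ref{rhp:Bessel}, since only the jump structure, and not the precise angles, matters.

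Finally, for the matching \eqref{match-P1} I would observe that on $\partial D(1,\varepsilon)$ one has $s^{4/3}\tilde f(z)\to\infty$ as $s\to\infty$; substituting the large-argument asymptotics \eqref{infty-Bessel} of $\Phi_0^{(\text{Bes})}$ into \eqref{def-P1} and \eqref{def-E1}, the leading term reconstructs $N_\alpha(z)$ exactly (this is what $\tilde E$ was engineered to achieve), and the first correction is of order $s^{-2/3}$, giving $P^{(1)}(z)N_\alpha(z)^{-1}=I+\tilde J_1(z)\,s^{-2/3}+\Boh(s^{-4/3})$ with $\tilde J_1(z)=\tfrac{1}{8\,\tilde f(z)^{1/2}}\,N_\alpha(z)\,M\,N_\alpha(z)^{-1}$ for an explicit constant matrix $M$ supported in the $(1,2)$-block (the $\alpha=0$ analogue of the matrix in \eqref{def-J01}), uniformly on $\partial D(1,\varepsilon)$; this is precisely \eqref{match-P1}. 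The step I expect to be the main obstacle is the analyticity of $\tilde E$ at $z=1$: it requires combining the fully explicit singular expansion \eqref{N1} with the fractional-power factors and verifying the cancellation at the level of individual matrix entries, and the branch bookkeeping for $\tilde f(z)^{1/4}$ on the two sides of $\Sigma_0^{(1)}$, together with its compatibility with \eqref{1-lambda1}--\eqref{1-lambda2} and with the conventions in RH problem~\ref{rhp:Bessel}, is the delicate point.
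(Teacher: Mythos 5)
Your proposal follows the paper's proof essentially step for step: verify analyticity of $\tilde E$ on $(1,1+\varepsilon)$ by computing $\tilde E_-^{-1}\tilde E_+$, rule out a pole at $z=1$ using the singular expansion \eqref{N1} (the paper does this by exhibiting $\tilde E(1)$ explicitly in \eqref{E1}; you point instead to the $\pm\ii$-proportionality of the first two columns of the coefficient matrices as the structural reason the $(z-1)^{\pm 1/4}$ singularities cancel, which is the same fact viewed from a different angle), recognize the upper-left $2\times 2$ block as $\Phi_0^{(\text{Bes})}\sigma_1\sigma_3$ so that the Bessel jumps pull back to \eqref{jump-P1} under $\zeta=s^{4/3}\tilde f(z)$, and read off the matching \eqref{match-P1} from the Bessel asymptotics \eqref{infty-Bessel}, arriving at the same $\tilde J_1(z)$ of \eqref{def-J11}. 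This is the paper's argument with only cosmetic differences in presentation.
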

\begin{proof}
By \eqref{def-E1}, it is easily seen that $\tilde{E}(z)$ is analytic in $D(1,\varepsilon)\setminus [1, 1+ \varepsilon)$. For $z \in (1, 1+ \varepsilon)$, it follows from \eqref{jump-N} and \eqref{def-f} that
\begin{align}
\tilde{E}_-(z)^{-1} \tilde{E}_+(z) & = \frac{1}{2} \begin{pmatrix}
	\pi^{-\frac{1}{2}} s^{-\frac{1}{3}} \tilde{f}_-(z)^{-\frac{1}{4}} & -\ii \pi^{-\frac{1}{2}} s^{-\frac{1}{3}} \tilde{f}_-(z)^{-\frac{1}{4}} & 0\\
	\ii \pi^{\frac{1}{2}} s^{\frac{1}{3}} \tilde{f}_-(z)^{\frac{1}{4}} & -\pi^{\frac{1}{2}} s^{\frac{1}{3}} \tilde{f}_-(z)^{\frac{1}{4}} & 0\\
	0 & 0 & \sqrt{2}
	\end{pmatrix}
	\begin{pmatrix}
		0 & 1 & 0\\
		-1 & 0 & 0\\
		0 & 0 & 1
	\end{pmatrix}
\nonumber \\
	&~~ \times
	\begin{pmatrix}
	\pi^{\frac{1}{2}} s^{\frac{1}{3}} \tilde{f}_+(z)^{\frac{1}{4}} & -\ii \pi^{-\frac{1}{2}} s^{-\frac{1}{3}} \tilde{f}_+(z)^{-\frac{1}{4}} & 0\\
	\ii \pi^{\frac{1}{2}} s^{\frac{1}{3}} \tilde{f}_+(z)^{\frac{1}{4}} & -\pi^{-\frac{1}{2}} s^{-\frac{1}{3}} \tilde{f}_+(z)^{-\frac{1}{4}} & 0\\
	0 & 0 & \sqrt{2}
	\end{pmatrix}=I,
	\end{align}
and by \eqref{N1},
\begin{align}\label{E1}
	\tilde{E}(1) & = \frac{C_{\alpha}}{9 \sqrt{2}}\begin{pmatrix}
		-5 \cdot 2^{-\frac 23} & -7 \cdot 2^{\frac 13} & 2^{-\frac 23}\\
		4 & -2 & -2\\
		-2^{\frac 53} & 2^{\frac 83} & -2^{\frac 53}
	\end{pmatrix}\left[3^{\frac{1}{4}} \tilde{c}_1^{\frac{1}{2}} \pi^{\frac{1}{2}} s^{\frac{1}{3}} \begin{pmatrix}
	2 & 0 & 0\\
	-1 & 0 & 0\\
	\frac{1}{2} & 0 & 0
	\end{pmatrix} \right. \nonumber
    \\
	&\left.
    ~~ -\frac{2^{\alpha+1/2}}{\sqrt{3}} \begin{pmatrix}
	0 & 0 & \frac{1}{2}\\
	0 & 0 & 2\\
	0 & 0 & 8
	\end{pmatrix} + \frac{2 \ii}{3^{1/4} \tilde{c}_1^{1/2} \pi^{1/2} s^{1/3}} \begin{pmatrix}
	0 & \alpha - \frac{5}{3} & 0\\
	0 & -\frac{\alpha}{2} - \frac{2}{3} & 0\\
	0 & \frac{\alpha}{4}+ \frac{13}{12} & 0
	\end{pmatrix}\right],
	\end{align}
where $C_\alpha$ and $\tilde{c}_1$ are given in \eqref{def:C-alpha} and \eqref{def-c}, respectively.
We thus conclude that $\tilde{E}(z)$ is analytic in $D(1, \varepsilon)$. Note that
\begin{equation}\label{eq:relation}
\begin{pmatrix}
	\left(\Phi_{0}^{(\text{Bes})}\right)_{12}(s^{\frac{4}{3}}\tilde{f}(z)) & -\left(\Phi_{0}^{(\text{Bes})}\right)_{11}(s^{\frac{4}{3}}\tilde{f}(z))  \\
	\left(\Phi_{0}^{(\text{Bes})}\right)_{22}(s^{\frac{4}{3}}\tilde{f}(z)) & -\left(\Phi_{0}^{(\text{Bes})}\right)_{21}(s^{\frac{4}{3}}\tilde{f}(z))
	\end{pmatrix}= \Phi_{0}^{(\text{Bes})}(s^{\frac{4}{3}}\tilde{f}(z))\sigma_1\sigma_3,
\end{equation}
where the Pauli matrices $\sigma_1$ and $\sigma_3$ are defined in \eqref{def:Pauli}. It is then easy to check that $P^{(1)}(z)$ satisfies the jump condition \eqref{jump-P1} by applying \eqref{jump-Bessel}, item (i) of Proposition \ref{pro-de} and the analyticity of $\tilde{E}(z)$.

Finally, on account of \eqref{def-P1}, \eqref{def-E1}, \eqref{eq:relation} and the asymptotic behavior of the Bessel parametrix $\Phi_{\alpha}^{(\text{Bes})}(z)$ at infinity in \eqref{infty-Bessel}, we obtain after a straightforward computation that, as $s\to \infty$,
\begin{equation}
P^{(1)}(z) N_\alpha(z)^{-1} = I + \frac{\tilde{J}_1(z)}{s^{2/3}}+ \Boh(s^{-\frac{4}{3}}),
\end{equation}
with
\begin{equation}\label{def-J11}
\tilde{J}_1(z) = \frac{1}{8 \tilde{f}(z)^{1/2}}N_\alpha(z)\begin{pmatrix}
-1 & 2 \ii & 0\\
2 \ii & 1  & 0\\
0 & 0& 0
\end{pmatrix}N_\alpha(z)^{-1}.
\end{equation}

This completes the proof of Lemma \ref{lem-P1}.
\end{proof}

For later use, we need to calculate $\tilde{E}'(1)$. The evaluation is
direct and cumbersome by combining \eqref{def-E1} and the asymptotics of $N_\alpha (z)$ and $\tilde f(z)$ near $z=1$
given in \eqref{N1} and \eqref{def-f}. We omit the details but present the result below.
\begin{align}\label{E'1}
	\tilde{E}'(1) & = \frac{C_{\alpha}}{9 \sqrt{2}}\begin{pmatrix}
		-5 \cdot 2^{-\frac 23} & -7 \cdot 2^{\frac 13} & 2^{-\frac 23}\\
		4 & -2 & -2\\
		-2^{\frac 53} & 2^{\frac 83} & -2^{\frac 53}
	\end{pmatrix}
	\left[\frac{3^{1/4} \tilde{c}_1^{1/2} \pi^{1/2} s^{1/3}}{9}\begin{pmatrix}
	\frac{9\tilde{c}_3}{\tilde{c}_1} - \alpha (3\alpha -5) -\frac{1}{6} & 0 & 0\\
	-\frac{9\tilde{c}_3}{2\tilde{c}_1} +\frac{\alpha (3\alpha + 13)}{2} -\frac{35}{12} & 0 & 0\\
	\frac{9\tilde{c}_3}{4\tilde{c}_1} - \frac{\alpha (3\alpha +31)}{4} -\frac{253}{24} & 0 & 0
	\end{pmatrix} \right.
    \nonumber
    \\
	&\qquad+\frac{\ii}{54 \cdot 3^{1/4} \tilde{c}_1^{1/2} \pi^{1/2} s^{1/3}}\begin{pmatrix}
	0 & \frac{27\tilde{c}_3}{\tilde{c}_1}\left(\frac{10}{3} - 2 \alpha\right) - 6 \alpha^3 +15 \alpha + 1 & 0\\
	0 & \frac{27\tilde{c}_3}{\tilde{c}_1}\left(\frac{4}{3} +\alpha\right) +3 \alpha^3 +27 \alpha^2+\frac{39}{2} \alpha -14 & 0\\
	0 & -\frac{27\tilde{c}_3}{\tilde{c}_1}\left(\frac{13}{6} +\frac{\alpha}{2}\right) - \frac{6 \alpha^3 +108 \alpha^2 +417 \alpha + 269}{4} & 0
	\end{pmatrix}
   \nonumber \\
	&\left. \qquad +\frac{2^{\alpha+1/2}\sqrt{3}}{27} \begin{pmatrix}
	0 & 0 & 2\alpha - \frac{8}{3}\\
	0 & 0 & 8\alpha - \frac{14}{3}\\
	0 & 0 & 32 \alpha + \frac{16}{3}
	\end{pmatrix}\right],
	\end{align}
where the matrix $C_\alpha$ and the constants $\tilde c_i$, $i=1,2,3$, are given in \eqref{def:C-alpha} and \eqref{def-c}.

\subsection{Final transformation}
The final transformation is defined by
\begin{equation}\label{def-R}
R(z) = \begin{cases}
S(z) P^{(0)}(z)^{-1}, & \qquad z \in D(0, \varepsilon),\\
S(z) P^{(1)}(z)^{-1}, & \qquad z \in D(1, \varepsilon),\\
S(z) N_\alpha(z)^{-1}, &\qquad \textrm{elsewhere}.
\end{cases}
\end{equation}
It is then easily seen that $R(z)$ satisfies the following RH problem.
\begin{rhp}
\hfill
\begin{itemize}
\item [\rm (a)] $R(z)$ is analytic in $\mathbb{C} \setminus \Sigma_R$;
where the contour $\Sigma_R$ is shown in Figure \ref{figure-R}.

\item [\rm (b)] For $z \in \Sigma_R$, we have
$$R_+(z) = R_-(z) J_R(z),$$
where
\begin{equation}
J_R(z) = \begin{cases}
P^{(0)}(z)N_\alpha(z)^{-1} & \quad z \in \partial D(0, \varepsilon),\\
P^{(1)}(z)N_\alpha(z)^{-1} & \quad z \in \partial D(1, \varepsilon),\\
N_\alpha(z)J_S(z)N_\alpha(z)^{-1} &\quad \textrm{$z\in \Sigma_R \setminus (\partial D(0, \varepsilon)\cup \partial D(1, \varepsilon))$},
\end{cases}
\end{equation}
and where $J_S(z)$ is defined in \eqref{jump-S}.

\item [\rm (c)] As $z \to \infty$, we have
\begin{equation}\label{infty-R}
R(z) = I + \frac{\mathsf{R}_1}{z} + \Boh(z^{-2}),
\end{equation}
where $\mathsf{R}_1$ is independent of $z$.
\end{itemize}
\end{rhp}

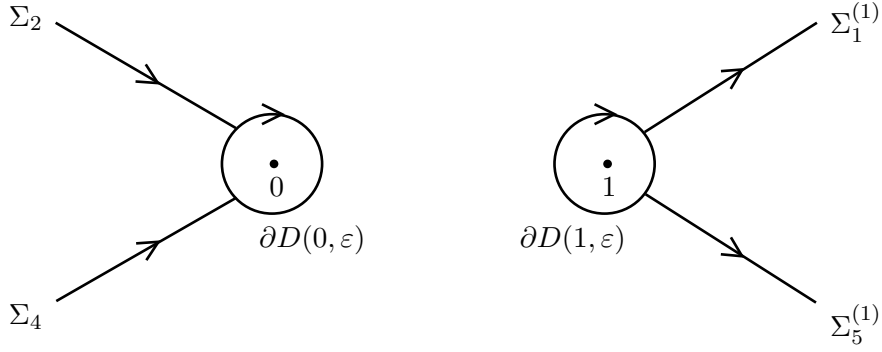
\begin{figure}[ht]
\centering

\tikzset{every picture/.style={line width=1pt}} 

\begin{tikzpicture}[x=0.75pt,y=0.75pt,yscale=-1,xscale=1]

\draw  [color={rgb, 255:red, 0; green, 0; blue, 0 }  ,draw opacity=1.5 ] (153,141) .. controls (153,127.19) and (164.19,116) .. (178,116) .. controls (191.81,116) and (203,127.19) .. (203,141) .. controls (203,154.81) and (191.81,166) .. (178,166) .. controls (164.19,166) and (153,154.81) .. (153,141) -- cycle ;
\draw   (173,111) -- (183,116) -- (173,121) ;
\draw  [color={rgb, 255:red, 0; green, 0; blue, 0 }  ,draw opacity=1.5 ] (319,141) .. controls (319,127.19) and (330.19,116) .. (344,116) .. controls (357.81,116) and (369,127.19) .. (369,141) .. controls (369,154.81) and (357.81,166) .. (344,166) .. controls (330.19,166) and (319,154.81) .. (319,141) -- cycle ;
\draw   (339,111) -- (349,116) -- (339,121) ;
\draw    (70,70) -- (160,123) ;
\draw   (114.97,90.94) -- (121.02,100.35) -- (109.87,99.54) ;

\draw    (70.31,210) -- (160.31,157.94) ;
\draw   (115.28,189.43) -- (121.32,180.19) -- (110.17,180.98) ;

\draw    (364,125) -- (450,70) ;
\draw   (406.97,103.27) -- (412.75,93.51) -- (402.09,94.34) ;

\draw    (364.47,156.25) -- (449.53,210.75) ;
\draw   (407.07,177.94) -- (412.68,187.45) -- (402.04,186.46) ;

\draw  [fill={rgb, 255:red, 0; green, 0; blue, 0 }  ,fill opacity=1 ] (180.5,141) .. controls (180.5,140.17) and (179.83,139.5) .. (179,139.5) .. controls (178.17,139.5) and (177.5,140.17) .. (177.5,141) .. controls (177.5,141.83) and (178.17,142.5) .. (179,142.5) .. controls (179.83,142.5) and (180.5,141.83) .. (180.5,141) -- cycle ;
\draw  [fill={rgb, 255:red, 0; green, 0; blue, 0 }  ,fill opacity=1 ] (347,141) .. controls (347,140.17) and (346.33,139.5) .. (345.5,139.5) .. controls (344.67,139.5) and (344,140.17) .. (344,141) .. controls (344,141.83) and (344.67,142.5) .. (345.5,142.5) .. controls (346.33,142.5) and (347,141.83) .. (347,141) -- cycle ;

\draw (175,146) node [anchor=north west][inner sep=0.75pt]   [align=left] {0};
\draw (341,146) node [anchor=north west][inner sep=0.75pt]   [align=left] {1};
\draw (45,60) node [anchor=north west][inner sep=0.75pt]   [align=left] {$\Sigma_2$};
\draw (45,210) node [anchor=north west][inner sep=0.75pt]   [align=left] {$\Sigma_4$};
\draw (455,58) node [anchor=north west][inner sep=0.75pt]   [align=left] {$\Sigma_1^{(1)}$};
\draw (455,210) node [anchor=north west][inner sep=0.75pt]   [align=left] {$\Sigma_5^{(1)}$};

\draw (170,170) node [anchor=north west][inner sep=0.75pt]   [align=left] {$\partial D(0,\varepsilon)$};
\draw (300,170) node [anchor=north west][inner sep=0.75pt]   [align=left] {$\partial D(1,\varepsilon)$};

\end{tikzpicture}
\caption{The jump contour $\Sigma_R$ for the RH problem for $R$.}
 \label{figure-R}
\end{figure}

On account of Corollary \ref{cor}, the matching conditions \eqref{match-P0} and \eqref{match-P1}, it is readily seen that $J_R(z) \to I$ as $s\to \infty$. By a standard argument (cf. \cite{Deift1999}), we conclude that, as $s\to \infty$,
\begin{equation}\label{def-RR}
R(z) = I + \frac{R_1(z)}{s^{2/3}} + \Boh(s^{-\frac{4}{3}}) \quad \textrm{and} \quad \frac{\ud}{\ud z} R(z) = \frac{R_1'(z)}{s^{2/3}} + \Boh(s^{-\frac{4}{3}}),
\end{equation}
uniformly for $z\in \mathbb{C}\setminus \Sigma_R$. Moreover, the function $R_1(z)$ in \eqref{def-RR} is analytic in $\mathbb{C} \setminus (\partial D(0, \varepsilon) \cup \partial D(1, \varepsilon))$ with asymptotic behavior $\Boh(1/z)$ as $z\to \infty$, and satisfies
$$
R_{1,+}(z)-R_{1,-}(z)= \left\{
                         \begin{array}{ll}
                           J_1(z), & \hbox{$z\in \partial D(0,\varepsilon)$,} \\
                           \tilde J_1(z), & \hbox{$z\in \partial D(1,\varepsilon)$,}
                         \end{array}
                       \right.
$$
where the functions $J_1(z)$ and $\tilde{J}_1(z)$ are given in \eqref{def-J01} and \eqref{def-J11}, respectively.
By Cauchy's residue theorem, we have
\begin{align}\label{def-R1}
R_1(z) & = \frac{1}{2 \pi \ii} \oint_{\partial D(0, \varepsilon)} \frac{J_1(\zeta)}{z - \zeta} \ud \zeta + \frac{1}{2 \pi \ii} \oint_{\partial D(1, \varepsilon)} \frac{\tilde{J}_1(\zeta)}{z - \zeta} \ud \zeta
\nonumber
\\
& = \begin{cases}
\frac{\Res_{\zeta = 0} J_1(\zeta)}{z} + \frac{\Res_{\zeta = 1} \tilde{J}_1(\zeta)}{z-1}, \quad z \in \mathbb{C} \setminus (D(0, \varepsilon) \cup D(1, \varepsilon)),\\
\frac{\Res_{\zeta = 0} J_1(\zeta)}{z} + \frac{\Res_{\zeta = 1} \tilde{J}_1(\zeta)}{z-1}- J_1(z), \quad z \in D(0, \varepsilon),\\
\frac{\Res_{\zeta = 0} J_1(\zeta)}{z} + \frac{\Res_{\zeta = 1} \tilde{J}_1(\zeta)}{z-1} - \tilde{J}_1(z), \quad z \in D(1, \varepsilon).
\end{cases}
\end{align}

We conclude this section with the calculation of $R_1'(1)$. Recall $\tilde{J}_1(z)$ in \eqref{def-J11}, we have from the asymptotics of $N_\alpha(z)$ and $\tilde{f}(z)$ near $z=1$ in \eqref{N1} and \eqref{def-f} that
\begin{equation}\label{J1}
\tilde{J}_1(z) = \frac{\Res_{\zeta = 1} \tilde{J}_1(\zeta)}{z-1} + \mathcal{J}_0 + \mathcal{J}_{1}(z-1) + \Boh ((z-1)^2), \qquad z \to 1,
\end{equation}
where $\mathcal{J}_0$ and $\mathcal{J}_1$ are two constant matrices.
This, together with \eqref{def-R1}, implies that
\begin{equation}\label{R1'}
R_1'(1) = -\mathcal{J}_{1} - \Res_{\zeta = 0} J_1(\zeta).
\end{equation}
Although the explicit formula of $\mathcal{J}_1$ is available, we decide not to include
it due to the complicated form. For the term $\displaystyle \Res_{\zeta = 0} J_1(\zeta)$, combining \eqref{def-J01}, \eqref{N1} and \eqref{def-tildef} together, we have
\begin{align}\label{res-J1}
\Res_{\zeta = 0} J_1(\zeta) & = C_{\alpha}\begin{pmatrix}
		-5 \cdot 2^{-\frac 23} & -7 \cdot 2^{\frac 13} & 2^{-\frac 23}\\
		4 & -2 & -2\\
		-2^{\frac 53} & 2^{\frac 83} & -2^{\frac 53}
	\end{pmatrix} \begin{pmatrix}
	0 & 0 & 0\\
	0 & 0 & 0\\
	0 & \frac{3^{3/2}(4\alpha^2-1)}{16 |c_1|} & 0
	\end{pmatrix}\nonumber\\
	& \qquad	\times \begin{pmatrix}
		-5 \cdot 2^{-\frac 23} & -7 \cdot 2^{\frac 13} & 2^{-\frac 23}\\
		4 & -2 & -2\\
		-2^{\frac 53} & 2^{\frac 83} & -2^{\frac 53}
	\end{pmatrix}^{-1} C_{\alpha}^{-1},
\end{align}
where $C_{\alpha}$ is given in \eqref{def:C-alpha}.

\section{Proof of Theorem \ref{th1}}\label{sec:proof}
We start with derivation of asymptotics of $\frac{\partial}{\partial s} F(s; \rho)$. By \eqref{ds-F} and \eqref{def-T}, it follows that
\begin{equation}\label{eq:FsT}
\frac{\partial}{\partial s} F(s; \rho)  = - \frac{1}{2 \pi \ii} \lim_{z \to s} \left(X(z)^{-1} X'(z) \right)_{21} = - \frac{1}{2 \pi \ii s} \lim_{z \to 1} \left(T(z)^{-1} T'(z) \right)_{21}
\end{equation}
Inverting the transformation $T \to S$ given in \eqref{def-S}, we have
\begin{equation}
T(z)=A S(z)\diag(e^{s^{2/3}\lambda_{1}(z)},e^{s^{2/3}\lambda_{2}(z)},e^{s^{2/3}\lambda_{3}(z)}),
\end{equation}
where $A$ is an invertible matrix that is independent of $z$. Thus,
\begin{align}
\lim_{z \to 1}  \left(T(z)^{-1} T'(z) \right)_{21}
&= \lim_{z \to 1} \left(\diag(e^{-s^{2/3}\lambda_{1}(z)},e^{-s^{2/3}\lambda_{2}(z)},e^{-s^{2/3}\lambda_{3}(z)})S(z)^{-1} S'(z)
\right.
\nonumber
\\
& \left. ~~ \times \diag(e^{s^{2/3}\lambda_{1}(z)},e^{s^{2/3}\lambda_{2}(z)},e^{s^{2/3}\lambda_{3}(z)})\right)_{21}=\lim_{z \to 1}  \left(S(z)^{-1} S'(z) \right)_{21},
\end{align}
where we have made use of \eqref{1-lambda1} and \eqref{1-lambda2} in the second equality. By further tracking back the transformation $S \to R$ given in \eqref{def-R}, we obtain from \eqref{eq:FsT} and the above formula that
\begin{align}\label{ds-F1}
\frac{\partial}{\partial s} F(s; \rho) &= - \frac{1}{2 \pi \ii s} \lim_{z \to 1}\left(S(z)^{-1} S'(z) \right)_{21}
\nonumber
\\
& = - \frac{1}{2 \pi \ii s} \lim_{z \to 1} \left(P^{(1)}(z)^{-1} R(z)^{-1} R'(z) P^{(1)}(z) + P^{(1)}(z)^{-1} (P^{(1)}(z))' \right)_{21}.
\end{align}
This, together with explicit expression of $P^{(1)}(z)$ in \eqref{def-P1}, estimates of $R(z)$, $R'(z)$ in \eqref{def-RR} and the local behaviors of $\lambda_i(z)$, $i=1,2$, near $z=1$ in \eqref{1-lambda1} and \eqref{1-lambda2}, implies that
\begin{align}\label{F-expansion}
&\frac{\partial}{\partial s} F(s; \rho)
\nonumber
\\ & = - \frac{1}{2 \pi \ii s} \lim_{z \to 1} \Bigg(B(s^{\frac{4}{3}} \tilde{f}(z))^{-1} \tilde{E}(z)^{-1} \left(\frac{R_1'(z)}{s^{2/3}} + \Boh(s^{-\frac{4}{3}})\right) \tilde{E}(z) B(s^{\frac{4}{3}} \tilde{f}(z))
\nonumber
\\
&~~ + B(s^{\frac{4}{3}} \tilde{f}(z))^{-1} \tilde{E}(z)^{-1}\tilde{E}'(z)B(s^{\frac{4}{3}} \tilde{f}(z)) + s^{\frac{4}{3}} \tilde{f}'(z)B(s^{\frac{4}{3}} \tilde{f}(z))^{-1}B'(s^{\frac{4}{3}} \tilde{f}(z)) \Bigg)_{21},
\end{align}
where
\begin{equation}
B(z): = \begin{pmatrix}
	\left(\Phi_{0}^{(\text{Bes})}\right)_{12}(z) & -\left(\Phi_{0}^{(\text{Bes})}\right)_{11}(z) & 0\\
	\left(\Phi_{0}^{(\text{Bes})}\right)_{22}(z) & -\left(\Phi_{0}^{(\text{Bes})}\right)_{21}(z) & 0\\
	0 & 0 & 1
	\end{pmatrix}.
\end{equation}
%
%

We next calculate the three terms on the right hand side of \eqref{F-expansion} one by one. To proceed, we observe from
\eqref{def-bessel} and properties of the modified Bessel functions $I_0$ and $K_0$ given in \cite[Chapter 10]{NIST} that,
as $z \to 0$,
\begin{equation}
B(z) = \begin{pmatrix}
	1 +\Boh(z) & \Boh(\ln{z}) & 0\\
	\frac{\pi \ii}{2} z + \Boh(z^2) & -1 + \Boh(z \ln{z}) & 0\\
	0 & 0 & 1
	\end{pmatrix},
\end{equation}
and
\begin{equation}
B(z)^{-1} =  \begin{pmatrix}
	\left(\Phi_{0}^{(\text{Bes})}\right)_{21}(z) & -\left(\Phi_{0}^{(\text{Bes})}\right)_{11}(z) & 0\\
	\left(\Phi_{0}^{(\text{Bes})}\right)_{22}(z) & -\left(\Phi_{0}^{(\text{Bes})}\right)_{12}(z) & 0\\
	0 & 0 & 1
	\end{pmatrix}
  =\begin{pmatrix}
	1 +\Boh(z\ln z) & \Boh(\ln{z}) & 0\\
	\frac{\pi \ii}{2} z + \Boh(z^2) & -1 + \Boh(z) & 0\\
	0 & 0 & 1
	\end{pmatrix}.
\end{equation}
A combination of the above two formulas gives us
\begin{equation}\label{BB'}
\lim_{z \to 0} \left(B(z)^{-1}B'(z)\right)_{21} = -\frac{\pi \ii}{2}.
\end{equation}
In addition, it is straightforward to check that for any $3 \times 3$ matrix $M$, one has
\begin{equation}\label{BMB}
\lim_{z \to 0} \left(B^{-1}(z)MB(z)\right)_{21} = -(M)_{21}.
\end{equation}
For the first term in \eqref{F-expansion}, we obtain from $\tilde{E}(1)$ in \eqref{E1} and $R_1'(1)$ in \eqref{R1'} that
\begin{equation}
\left(s^{-\frac{2}{3}}\tilde{E}(1)^{-1}R_1'(1) \tilde{E}(1)\right)_{21} = -\frac{\ii (4\alpha^2 -1) \tilde{c}_1 \pi}{24|c_1|} - \frac{3 \ii \tilde{c}_3 \pi}{8\tilde{c}_1} - \frac{\ii (8\alpha^2+1) \pi}{48},
\end{equation}
where $\tilde{c}_i, i =1, 2, 3$ are given in \eqref{def-c} and $c_1$ is given in \eqref{tildec}. This, together with \eqref{BMB} and \eqref{def-f}, implies that
\begin{align}\label{term-1}
 & \lim_{z \to 1} \left(B(s^{\frac{4}{3}} \tilde{f}(z))^{-1} \tilde{E}(z)^{-1} \left(\frac{R_1'(z)}{s^{2/3}} + \Boh(s^{-\frac{4}{3}})\right) \tilde{E}(z) B(s^{\frac{4}{3}} \tilde{f}(z))\right)_{21}
\nonumber
 \\
&=- \lim_{z \to 1}\left(\tilde{E}(z)^{-1} \left(\frac{R_1'(z)}{s^{2/3}} + \Boh(s^{-\frac{4}{3}})\right) \tilde{E}(z) \right)_{21}
\nonumber
 \\
 &= \frac{\ii (4\alpha^2 -1) \tilde{c}_1 \pi}{24|c_1|}  + \frac{3 \ii \tilde{c}_3 \pi}{8\tilde{c}_1} + \frac{\ii (8\alpha^2+1) \pi}{48} + \Boh(s^{-\frac{2}{3}}).
\end{align}
Similarly, with the aids of $\tilde{E}(1)$ and $\tilde{E}'(1)$ in \eqref{E1} and \eqref{E'1}, we have
\begin{multline}\label{term-2}
\lim_{z \to 1} \left(B(s^{\frac{4}{3}} \tilde{f}(z))^{-1} \tilde{E}(z)^{-1}\tilde{E}'(z)B(s^{\frac{4}{3}} \tilde{f}(z)) \right)_{21} =
- \lim_{z \to 1}\left(\tilde{E}(z)^{-1}\tilde{E}'(z) \right)_{21}
\\
 = -\frac{\ii \sqrt{3} \alpha \pi \tilde{c}_1}{3} s^{\frac 23}.
\end{multline}
The third term in \eqref{F-expansion} can be evaluated directly by applying \eqref{def-f} and \eqref{BB'}, which gives
\begin{equation}\label{term-3}
\lim_{z \to 1} \left(s^{\frac{4}{3}} \tilde{f}'(z)B^{-1}(s^{\frac{4}{3}} \tilde{f}(z)) B'(s^{\frac{4}{3}} \tilde{f}(z)) \right)_{21} = \frac{\ii \pi \tilde{c}_1^2}{2}s^{\frac{4}{3}}.
\end{equation}

Finally, substituting \eqref{term-1}--\eqref{term-3} and \eqref{def-c} into \eqref{F-expansion}, we obtain
\begin{equation*}
\frac{\partial}{\partial s} F(s; \rho) = -\frac{3}{2^{8/3}}s^{\frac{1}{3}} + \frac{\rho}{2} + \frac{3\alpha - \rho^2}{3 \cdot 2^{4/3}} s^{-\frac{1}{3}} -\frac{\alpha \rho}{3 \cdot 2^{2/3}} s^{-\frac{2}{3}} -\frac{12\alpha^2+1}{72} s^{-1} + \Boh(s^{-\frac{4}{3}}),
\end{equation*}
as $s \to \infty$. Integrating the above formula gives us
\begin{equation}\label{F2}
F(s; \rho) = -\frac{9}{16 \cdot 2^{2/3}}s^{\frac{4}{3}} + \frac{\rho}{2}s + \frac{3\alpha - \rho^2}{2^{7/3}} s^{\frac{2}{3}} - \frac{\alpha \rho}{ 2^{2/3}} s^{\frac{1}{3}}-\frac{12\alpha^2+1}{72} \ln{s}
+ C(\rho)+ \Boh(s^{-\frac{1}{3}}),
\end{equation}
uniformly for $\rho$ in any compact subset of $\mathbb{R}$, where $C(\rho)$ is a constant that might be dependent on the parameters $\alpha$ and $\rho$.

To find more information about $C(\rho)$, we come to $\frac{\partial}{\partial \rho} F(s; \rho)$. From \eqref{drho-F} and \eqref{def-S1}, we
have
\begin{equation}\label{eq:Fparrho}
\frac{\partial}{\partial \rho} F(s; \rho) = - (X_1)_{31}+ \frac{\rho (\rho^2 + 9\alpha)}{27}= -s^{\frac{1}{3}}(S_1)_{31} - s d_1 + \frac{\rho (\rho^2 + 9\alpha)}{27},
\end{equation}
where $S_1$ and $d_1$ are given in \eqref{def-S1} and \eqref{d12}. Recall that
\begin{equation}
S(z) = R(z)N_\alpha(z), \qquad z \in \mathbb{C} \setminus (D(0, \varepsilon) \cup D(1, \varepsilon) \cup \Sigma_T),
\end{equation}
it follows \eqref{infty-S}, \eqref{infty-N1} and \eqref{infty-R} that
\begin{equation}
S_1 = \mathsf{N}_1 + \mathsf{R}_1
\end{equation}
where $\mathsf{N}_1$ and $\mathsf{R}_1$ are the coefficients of $1/z$ for $R(z)$ and $N(z)$ at infinity given in \eqref{infty-N1} and \eqref{infty-R}. It is clear from \eqref{def-RR} that
$
\mathsf{R}_1 = \Boh(s^{-2/3}).
$
This, together with \eqref{def-N1}, implies that
\begin{equation}
(S_1)_{31} = (\mathsf{N}_1 + \mathsf{R}_1)_{31}=\frac{\alpha}{2^{2/3}}+\Boh(s^{-\frac 23}).
\end{equation}
We then obtain from \eqref{eq:Fparrho}, \eqref{d12} and the above formula that
\begin{equation}
\frac{\partial}{\partial \rho} F(s; \rho) = \frac{s}{2} - \frac{\rho}{2^{4/3}} s^{\frac{2}{3}} - \frac{\alpha}{2^{2/3}} s^{\frac{1}{3}} + \frac{\rho (\rho^2 + 9\alpha)}{27}+ \Boh(s^{-\frac{1}{3}}), \qquad s\to \infty.
\end{equation}
Comparing this approximation with the asymptotics of $F (s; \rho)$ given in \eqref{F2}, it is easily seen that
\begin{equation}
C'(\rho) = \frac{\rho (\rho^2 + 9\alpha)}{27}.
\end{equation}
Hence,
\begin{equation}\label{c-rho}
C(\rho) = \frac{\rho^4}{108}+\frac{\alpha \rho^2}{6} + C,
\end{equation}
where $C$ is an undetermined constant independent of $s$ and $\rho$. Inserting \eqref{c-rho} into \eqref{F2} leads to our final asymptotic result \eqref{F1}.

This completes the proof of Theorem \ref{th1}.
\qed

\appendix
\section{The Bessel parametrix}
\label{A}
The Bessel parametrix $\Phi_{\alpha}^{(\text{Bes})}(z)$, which depends on a parameter $\alpha>-1$,
is the unique solution of the following RH problem.

\begin{rhp}\label{rhp:Bessel}
\hfill
\begin{itemize}
	\item [\rm(a)] $\Phi_{\alpha}^{(\text{Bes})}(z)$ is defined and analytic in $\mathbb{C} \setminus (\cup_{j=1}^3 \Gamma_j \cup\{0\})$, where the contours $\Gamma_j$, $j=1,2,3$, are shown in Fig. \ref{figure-Bessel}.
	\item [\rm(b)] For $z \in \cup_{j=1}^3 \Gamma_j$, we have
	\begin{equation}\label{jump-Bessel}
		\Phi_{\alpha,+}^{(\text{Bes})}(z) = \Phi_{\alpha, -}^{(\text{Bes})}(z) \begin{cases}
		\begin{pmatrix}
			1 & e^{\alpha \pi \ii}\\
			0 & 1
		\end{pmatrix}, \quad & z \in \Gamma_1,\\
		\begin{pmatrix}
			0 &-1\\
			1 & 0
		\end{pmatrix}, \quad & z \in \Gamma_2,\\
		\begin{pmatrix}
			1 & e^{-\alpha \pi \ii}\\
			0 & 1
		\end{pmatrix}, \quad & z \in \Gamma_3.
		\end{cases}
	\end{equation}
	\item [\rm(c)] As $z \to \infty$, we have
	\begin{multline}\label{infty-Bessel}
	\Phi_{\alpha}^{(\text{Bes})}(z) = \frac{(\pi^2 z)^{-\sigma_3/4}}{\sqrt{2}} \begin{pmatrix}
		\ii & 1\\
		1 & \ii
	\end{pmatrix}
\\ \times \left(I + \frac{1}{8z^{1/2}}\begin{pmatrix}
	1+4\alpha^2 & -2 \ii\\
	-2 \ii & -1-4\alpha^2
	\end{pmatrix} + \Boh\left(\frac{1}{z}\right) \right)e^{-z^{1/2}\sigma_3},
	\end{multline}
where $\sigma_3$ is defined in \eqref{def:Pauli}.
	\item [\rm(d)] As $z \to 0$, we have
	\begin{equation}
		\Phi_{\alpha}^{(\text{Bes})}(z) = \begin{cases}
		\Boh\begin{pmatrix}
		|z|^{\frac{\alpha}{2}} & |z|^{\frac{\alpha}{2}}\\
		|z|^{\frac{\alpha}{2}} & |z|^{\frac{\alpha}{2}}
		\end{pmatrix}, \quad & \alpha <0,\\
		\Boh\begin{pmatrix}
		\ln{|z|} & \ln{|z|}\\
		\ln{|z|} & \ln{|z|}
		\end{pmatrix}, \quad & \alpha =0,\\
		\Boh\begin{pmatrix}
		|z|^{-\frac{\alpha}{2}} & |z|^{\frac{\alpha}{2}}\\
		|z|^{-\frac{\alpha}{2}} & |z|^{\frac{\alpha}{2}}
		\end{pmatrix}, \quad & \textrm{$\alpha >0$ and $z \in \rm{I}$},\\
		\Boh\begin{pmatrix}
		|z|^{-\frac{\alpha}{2}} & |z|^{-\frac{\alpha}{2}}\\
		|z|^{-\frac{\alpha}{2}} & |z|^{-\frac{\alpha}{2}}
		\end{pmatrix}, \quad & \textrm{$\alpha >0$ and $z \in \rm{II} \cup \rm{III}$},
		\end{cases}
	\end{equation}
where the domains $\rm{I}$--$\rm{III}$ are illustrated in Figure \ref{figure-Bessel}.
\end{itemize}
\end{rhp}

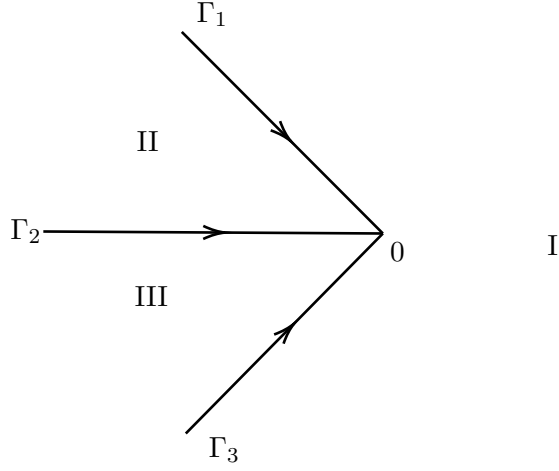
\begin{figure}[t]
\centering

\tikzset{every picture/.style={line width=1pt}} 

\begin{tikzpicture}[x=0.75pt,y=0.75pt,yscale=-1,xscale=1]

\draw    (100,130) -- (269.5,131) ;
\draw [shift={(190.75,130.54)}, rotate = 180.34] [color={rgb, 255:red, 0; green, 0; blue, 0 }  ][line width=1]    (10.93,-3.29) .. controls (6.95,-1.4) and (3.31,-0.3) .. (0,0) .. controls (3.31,0.3) and (6.95,1.4) .. (10.93,3.29)   ;
\draw    (169.12,29.61) -- (269.5,131) ;
\draw [shift={(223.53,84.57)}, rotate = 225.29] [color={rgb, 255:red, 0; green, 0; blue, 0 }  ][line width=1]    (10.93,-3.29) .. controls (6.95,-1.4) and (3.31,-0.3) .. (0,0) .. controls (3.31,0.3) and (6.95,1.4) .. (10.93,3.29)   ;
\draw    (269.5,131) -- (171.12,231.61) ;
\draw [shift={(225.2,176.3)}, rotate = 134.36] [color={rgb, 255:red, 0; green, 0; blue, 0 }  ][line width=1]    (10.93,-3.29) .. controls (6.95,-1.4) and (3.31,-0.3) .. (0,0) .. controls (3.31,0.3) and (6.95,1.4) .. (10.93,3.29)   ;

\draw (350,130) node [anchor=north west][inner sep=0.75pt]   [align=left] {I};
\draw (271.5,134) node [anchor=north west][inner sep=0.75pt]   [align=left] {0};
\draw (145,78) node [anchor=north west][inner sep=0.75pt]   [align=left] {II};
\draw (144,156) node [anchor=north west][inner sep=0.75pt]   [align=left] {III};
\draw (175,13) node [anchor=north west][inner sep=0.75pt]   [align=left] {$\Gamma_1$};
\draw (82,122) node [anchor=north west][inner sep=0.75pt]   [align=left] {$\Gamma_2$};
\draw (181,232) node [anchor=north west][inner sep=0.75pt]   [align=left] {$\Gamma_3$};

\end{tikzpicture}
\caption{The jump contours $\Gamma_j$, $j=1,2,3$, and the domains I--III in the RH problem for $\Phi_{\alpha}^{(\text{Bes})}$.}
  \label{figure-Bessel}
\end{figure}

Although the above model RH problem is slightly different from the standard Bessel parametrix introduced in \cite{Kuijlaars2004}, they are actually equivalent. From \cite{Kuijlaars2004}, we have
\begin{equation}\label{def-bessel}
	\Phi_{\alpha}^{(\text{Bes})}(z) = \begin{pmatrix}
	I_{\alpha}(z^{1/2})& \frac{\ii}{\pi} K_{\alpha}(z^{1/2})\\
	\pi \ii z^{1/2}I_{\alpha}'(z^{1/2}) & -z^{1/2}K_{\alpha}'(z^{1/2})
	\end{pmatrix}\begin{cases}
	\begin{pmatrix}
    0 & 1
   \\
    1 & 0
    \end{pmatrix}, \quad & z \in \rm{I},\\
	\begin{pmatrix}
	0 & 1\\
	1 & -e^{\alpha \pi \ii}
	\end{pmatrix}, \quad & z \in \rm{II},\\
	\begin{pmatrix}
	0 & 1\\
	1 & e^{-\alpha \pi \ii}
	\end{pmatrix}, \quad & z \in \rm{III},
	\end{cases}
\end{equation}
where $I_{\alpha}(z)$ and $K_{\alpha}(z)$ denote the modified Bessel functions \cite{NIST} and the principal branch is taken for $z^{1/2}$.

\section*{Acknowledgements}
This work was partially supported by National Natural Science Foundation of China under grant numbers 12271105 and 11822104, ``Shuguang Program'' supported by Shanghai Education Development Foundation and Shanghai Municipal Education Commission.

\end{document}